\let\footnote=\endnote
\newcommand{\exclude}[1]{}
\algnewcommand{\Or}{\textbf{or}}
\algnewcommand{\And}{\textbf{and}}
\tikzset{
	-Latex,auto,node distance =1 cm and 1 cm,semithick,
	state/.style ={ellipse, draw, minimum width = 0.7 cm},
	point/.style = {circle, draw, inner sep=0.04cm,fill,node contents={}},
	bidirected/.style={Latex-Latex,dashed},
	el/.style = {inner sep=2pt, align=left, sloped}
}
\declaretheorem[name=Theorem]{theorem}
\declaretheorem[name=Proposition]{proposition}
\declaretheorem[name=Lemma]{lemma}
\declaretheorem[name=Claim]{claim}
\declaretheorem[name=Corollary]{corollary}
\declaretheorem[name=Definition]{definition}
\declaretheorem[name=Example]{example}
\def\Re{\mathbb{R}}
\def\hat{\widehat}
\def \V{\mathbf{ V}}
\def \G{\mathcal{G}}
\def \F{\mathcal{F}}
\def \R{\mathcal{R}}
\def\N{{\mathcal N}}
\def\F{{\mathcal F}}
\def\H{{\mathcal H}}
\def\X{{\mathcal X}}
\def\C{{\mathcal{C}}}
\def\Re{{\mathbb R}}
\def\Q{{\mathcal Q}}
\def\ri{{\mathbf{ri}}}
\DeclareMathOperator{\clconv}{\overline{conv}}
\DeclareMathOperator{\reccone}{rec}
\DeclareMathOperator{\Proj}{proj}
\DeclareMathOperator{\aff}{aff}
\DeclareMathOperator{\spa}{span}
\DeclareMathOperator{\supp}{supp}
\DeclareMathOperator{\Diag}{Diag}
\DeclareMathOperator{\conv}{conv}
\DeclareMathOperator{\tr}{tr}
\DeclareMathOperator{\rank}{rank}
\DeclareMathOperator{\ext}{ext}
\DeclareMathOperator{\rel}{rel}
\DeclareMathOperator{\opt}{opt}
\renewcommand{\S}{\mathcal{S}}
\renewcommand*{\qed}{\hfill\ensuremath{\square}}
\newcommand*{\qedA}{\hfill\ensuremath{\diamond}}
\begin{document}
	
	\RUNAUTHOR{Yongchun Li and Weijun Xie}
	
	\RUNTITLE{On the Exactness of Dantzig-Wolfe Relaxation for Rank-Constrained Optimization Problems}
	
	\TITLE{On the Exactness of Dantzig-Wolfe Relaxation for Rank-Constrained Optimization Problems}
	
	\ARTICLEAUTHORS{%
		\AUTHOR{Yongchun Li}
		\AFF{H. Milton Stewart School of Industrial and Systems Engineering, Georgia Institute of Technology, Atlanta, GA, USA, \EMAIL{ycli@gatech.edu}} 
	\AUTHOR{Weijun Xie}
	\AFF{H. Milton Stewart School of Industrial and Systems Engineering, Georgia Institute of Technology, Atlanta, GA, USA, \EMAIL{wxie@gatech.edu}}
} 

	\ABSTRACT{
In the rank-constrained optimization problem (RCOP), it minimizes a linear objective function over a  prespecified closed rank-constrained domain set and $m$ generic two-sided linear matrix inequalities. Motivated by the Dantzig-Wolfe (DW) decomposition, a popular approach of solving many nonconvex optimization problems, we investigate the strength of DW relaxation (DWR) of the RCOP, which admits the same formulation as RCOP except replacing the domain set by its closed convex hull.  Notably, our goal is to characterize conditions under which the DWR matches RCOP for any $m$ two-sided linear matrix inequalities. From the primal perspective, we develop the first-known simultaneously necessary and sufficient conditions that achieve: (i) \textit{extreme point exactness}--all the extreme points of the DWR feasible set  belong to that of the RCOP;  (ii) \textit{convex hull exactness}-- the DWR feasible set is identical to the closed convex hull of RCOP feasible set; and (iii) \textit{objective exactness}--the optimal values of the DWR and RCOP coincide. The proposed conditions unify, refine, and extend the existing exactness results in the quadratically constrained quadratic program (QCQP) and fair unsupervised learning. 
}
\KEYWORDS{Rank Constraint;  Dantzig-Wolfe Relaxation;  Extreme Point Exactness; Convex Hull Exactness;  Objective Exactness; QCQP;  Fair Unsupervised Learning.}
	\maketitle
\newpage

\section{Introduction}
This paper studies the  Rank-Constrained Optimization Problem (RCOP) of the form:
\begin{align} \label{eq_rank}
	\text{(RCOP)} \quad	\V_{\opt} :=\min_{{\bm X \in \mathcal{X}}}\left\{\langle\bm A_0, \bm X\rangle: b_i^l
	\le \langle \bm A_i, \bm X\rangle \le  b_i^u, \forall i \in [m] \right\}, 
\end{align}
where $\langle\cdot, \cdot \rangle$ denotes the inner product of two matrices, the rank-constrained domain set $\X$ is closed and finite-dimensional, technology matrices $\bm A_0$ and $\{\bm A_i\}_{i\in [m]}$  can be  non-symmetric, and  for each $i\in [m]$,  
the lower or upper bounds of the $i$th two-sided Linear Matrix Inequality (LMI) can be negative infinite or positive infinite, respectively (i.e., 
$-\infty\leq b_i^l\leq b_i^u\leq +\infty$).  
We let  $\tilde{m}$ denote  the  dimension of technology matrices $\{\bm A_i\}_{i\in [m]}$ in RCOP \eqref{eq_rank}, i.e., the number of linearly independent matrices in the set $\{\bm A_i\}_{i\in [m]}$, and we must have $\tilde{m}\le m$. 
We  use the LMI to  denote the two-sided LMI for notational convenience.
	
	We let  the domain set $\X$  be
	\begin{align}\label{eq_set}
		\mathcal{X}:= \{\bm X\in \Q:   \rank(\bm X)\le k, F_j(\bm X)  \le 0, \forall j\in [t]\} ,
	\end{align}
	where the matrix space $\Q$ can denote positive semidefinite matrix space $\S_+^n$, symmetric matrix space $\S^n$, or non-symmetric matrix space $\Re^{n \times p}$ with  $k\le n\le p$ being positive integers, and for each $j\in [t]$, function $F_j(\cdot):\Q \to \Re$ is continuous but can be possible nonconvex. Throughout the paper, we assume that the closed convex hull $\clconv(\X)$ has no line, which is satisfied by all the examples in this paper.
	This generic domain set $\X$ allows the proposed RCOP framework \eqref{eq_rank} to deliver significant modeling flexibility.
	For example, when domain set $\X:= \{ \bm X\in \S_{+}^{n+1}: \rank(\bm X)\le 1\}$, RCOP reduces to quadratically constrained quadratic program (QCQP) of matrix form. 
	Subsection \ref{subsec:scope} reveals several 
	interesting  machine learning and optimization examples that fall into RCOP \eqref{eq_rank}. 
	
	Albeit versatile, the underlying rank-$k$ constraint dramatically complicates RCOP \eqref{eq_rank}, which often turns out to be a nonconvex bilinear program. 
	In this work, we leverage the closed convex hull  of domain set $\X$, denoted by $\clconv(\X)$, to  obtain a stronger convex relaxation of the RCOP  \eqref{eq_rank}, which refers to the  Dantzig-Wolfe  Relaxation (DWR) in literature (see, e.g., \cite{conforti2014integer}). Thus, we consider the following relaxation problem for RCOP \eqref{eq_rank}:
	\begin{align} \label{eq_rel_rank}
		\text{(DWR)} \quad  \V_{\rel}:=\min_{{\bm X \in \clconv(\mathcal{X})}}\left\{\langle\bm A_0, \bm X \rangle: 
		b_i^l \le  \langle \bm A_i, \bm X\rangle \le b_i^u, \forall i \in [m] \right\} \le \V_{\opt}. 
	\end{align}
	Note that 
	for a rank-constrained domain set $\X$, different techniques have been investigated to derive its closed convex hull  $\clconv(\X)$, such as the perspective  technique \citep{bertsimas2021new,de2022explicit, wei2022ideal} and majorization technique \citep{kim2021convexification}.
	Building on these exciting results, in this work, we assume that the closed convex hull  $\clconv(\X)$ is given. 
	It follows that the DWR \eqref{eq_rel_rank} can be solved  by the off-the-shelf solvers such as Gurobi and Mosek or the Dantzig-Wolfe decomposition algorithm as long as the separation problem over the domain set $\X$ can be done effectively.
	Based on that, this paper studies the exactness of DWR \eqref{eq_rel_rank}, that is, 
	\begin{quote}	\it under what conditions the DWR  \eqref{eq_rel_rank} matches
		the RCOP \eqref{eq_rank} when intersecting the domain set $\X$ with any $m$ LMIs of dimension $\tilde m$. 
	\end{quote}
	
	For the sake of notational convenience, throughout this paper, let us denote the feasible sets  of RCOP  \eqref{eq_rank} and DWR \eqref{eq_rel_rank} as
	\begin{equation} \label{eq_2sets}
		{\C}:= \left\{\bm X\in \X:  b_i^l
		\le \langle \bm A_i, \bm X\rangle \le b_i^u, \forall i \in [m]  \right\},    \ \ 
		{\C_{\rel}}:=\left\{\bm X\in \clconv(\X):  b_i^l
		\le \langle \bm A_i, \bm X\rangle \le b_i^u, \forall i \in [m]  \right\},
	\end{equation}
	where $\clconv(\C) \subseteq \C_{\rel}$ always holds. In this way, RCOP  \eqref{eq_rank} and DWR \eqref{eq_rel_rank} can be equivalently recast as $\min_{\bm X\in \C} \langle \bm A_0, \bm X\rangle$ and $\min_{\bm X\in \C_{\rel}} \langle \bm A_0, \bm X\rangle$, respectively.  


	\vspace{-1em}
	\subsection{Three Notions of DWR Exactness: Extreme Point, Convex Hull, and Objective Value}	
	To study the strength of DWR \eqref{eq_rel_rank}, we define three notions of DWR exactness: 
	extreme point exactness,  convex hull exactness, and objective exactness, where the first two concepts center on the feasible set and the last one highlights the optimal value. 
	We propose simultaneously necessary and sufficient conditions under which DWR \eqref{eq_rel_rank} achieves the three notions of exactness for any $m$ LMIs of dimension $\tilde m$, respectively. 
	To be specific,  given the domain set $\X$, \textit{extreme point exactness}  represents that all the extreme points of the feasible set of DWR \eqref{eq_rel_rank} are contained in the feasible set of RCOP \eqref{eq_rank}, i.e., $\ext(\C_{\rel}) \subseteq \C$ for any  $\tilde m$-dimensional LMIs; \textit{convex hull exactness} 
	implies that the DWR feasible set coincides with the closed convex hull of RCOP feasible set, i.e., $\C_{\rel} =\clconv(\C)$ for any  $\tilde m$-dimensional LMIs; and
	\textit{objective exactness} is another commonly-used concept in literature, meaning that DWR \eqref{eq_rel_rank} yields the same objective value as the  original RCOP \eqref{eq_rank}, i.e., $\V_{\opt} = \V_{\rel}$ for any  $\tilde m$-dimensional LMIs  or some special RCOP families.
	

	The connections among these exactness notions are illustrated in  \autoref{fig:relation}. The convex hull exactness is the strongest notion and  implies the other two. The convex hull exactness reduces to the extreme point exactness for a bounded set $\C_{\rel}$.
	We show that  if  DWR \eqref{eq_rel_rank} yields a finite objective value, i.e.,  $\V_{\rel}>-\infty$, then the extreme point exactness is equivalent to the objective exactness for any linear objective function. If the objective exactness holds for any linear objective function, then the convex hull exactness naturally follows. 
	Additional objective exactness results are derived when we focus on two special yet  intriguing RCOP families as detailed in Section \ref{sec:obj}. 
	\begin{figure}[h]
		\centering
		\begin{tikzpicture}[scale=0.9, font=\small]
			\node[state,  fill=red!10, rectangle, align=center] (A) at (6, -1.5) {Objective Exactness: $\V_{\rel} =  \V_{\opt}$};
			\node[state,  fill=green!10, rectangle, align=center] (D) at (6, -3.5) {Objective Exactness  given two special RCOP families};
			\node[state,  fill=blue!10, rectangle, align=center] (B) at (1.5, 2) {Extreme Point Exactness: $\ext(\C_{\rel}) \subseteq \C$};
			\node[state,  fill=yellow!10, rectangle, align=center] (C) at (10.5, 2) {Convex Hull  Exactness: $\C_{\rel} = \clconv (\C)$};
			
			\draw[latex-latex] (B) to node[below,rotate=60] {} (A);
			\draw[latex-latex] (A) to node[right] {} (C);
			\draw[-latex] (A) to node[right] {
				under conditions of $\{\bm A_i\}_{i\in [m]\cup\{0\}}$
			} (D);
			\node[text width=2.5cm] at (2, 0) {for any $\bm A_0$ such that $\V_{\rel}>-\infty$};
			
			\draw[-latex] (B) to[bend right=10] node[below] {given $\C_{\rel}$ is bounded} (C);
			\draw[-latex] (C) to[bend right=10] node[above] {} (B);
			
			\node[text width=3cm] at (10, 0) {for any $\bm A_0$};
		\end{tikzpicture}
		\caption{The relations among three DWR exactness notions.}
		\label{fig:relation}
	\end{figure}

	\subsection{Scope and Flexibility of Our RCOP Framework  \eqref{eq_rank}} \label{subsec:scope}
	This subsection presents RCOP examples from  optimization, statistics, and machine
	learning fields. 
	
	\noindent\textbf{Quadratically Constrained Quadratic Program (QCQP) with $k=1$. }   The QCQP has been widely used in many application areas, including optimal power flow  \citep{josz2016ac,low2013convex}, sensor network problems \citep{bertrand2011consensus, khobahi2019optimized}, signal processing \citep{huang2014randomized,gharanjik2016iterative}, among others.
	The QCQP problem can be formulated in the following form:
	\begin{align}\label{qcqp}  
		\text{(QCQP)} \quad	\min_{\bm x \in \Re^n} \left\{ \bm x^{\top} \bm Q_0 \bm x + \bm q_0^{\top} \bm x:  b_i^l \le \bm x^{\top} \bm Q_i \bm x + \bm q_i^{\top} \bm x  \le b_i^u, \forall i\in [m]\right\},
	\end{align}
	where matrices $\bm Q_0, \bm Q_1, \cdots, 
	\bm  Q_{m} $ are symmetric but  may not be positive semidefinite.
	Introducing the matrix variable 
	$\bm X \in \S_{+}^{n+1} :=  \begin{pmatrix}
		1 & \bm x^{\top}\\
		\bm x & \bm x \bm x^{\top}
	\end{pmatrix}$, the resulting equivalent formulation of QCQP in matrix form can be viewed a special case of our RCOP  \eqref{eq_rank} {with $(m+1)$ LMIs} as shown below
	\begin{align} \label{eq_qcqp}
		\text{(QCQP)} 	\quad \min_{\bm X \in \X} \left\{\langle\bm A_0, \bm X\rangle: 
		b_i^l \le	\langle \bm A_i, \bm X\rangle \le  b_i^u, \forall i \in [m], X_{11}=1 \right\},    \X:= \{\bm X \in \S_+^{n+1}: \rank(\bm X)\le 1\},
	\end{align}
	where $ \bm A_0 = \begin{pmatrix}
		0& { \bm q_0^{\top}}/{2}\\
		{\bm q_0}/{2}  & \bm Q_0\\
	\end{pmatrix}$ and  ${\bm A}_{i} = \begin{pmatrix}
		0& { \bm q_i^{\top}}/{2}\\
		{\bm q_i}/{2}  & \bm Q_i\\
	\end{pmatrix}$ for each $i\in [m]$.  
	We notice that as $\clconv(\X) = \S_+^{n+1}$, the corresponding DWR of the QCQP \eqref{eq_qcqp} reduces to the well-known Semidefinite Programming (SDP) relaxation in literature, i.e,
	\begin{align*}
		\text{(DWR of QCQP \eqref{eq_qcqp})} \quad 	\min_{\bm X \in \clconv(\X)} \left\{\langle\bm A_0, \bm X\rangle: 
		b_i^l \le	\langle \bm A_i, \bm X\rangle \le  b_i^u, \forall i \in [m], X_{11}=1 \right\},  \clconv(\X):=\S_+^{n+1}.
	\end{align*}
	Note that we can strengthen the SDP relaxation by incorporating more 
	constraints into the domain set $\X$, which will be illustrated in Subsection \ref{sec:dual}.

	\noindent\textbf{Fair Unsupervised Learning with $k\ge 1$.} 
	Conventional unsupervised learning approaches (e.g., PCA) may  produce biased learning results against sensitive  attributes, such as gender, race, or education level. Fairness  has recently been introduced to these problems. For example,  Fair PCA (FPCA) was studied in \cite{samadi2018price,tantipongpipat2019multi} and Fair SVD (FVSD) was proposed in \cite{buet2022towards}. Formally, FPCA in \cite{tantipongpipat2019multi} is defined as
	\begin{align*}
		&\text{(FPCA)} \quad  \max_{(z, \bm X) \in \Re\times\X} \left\{z: z\le \langle  \bm A_i, \bm X \rangle, \forall i \in [m]\right\}, \ \ {\X:= \{\bm X \in \S_+^{n}: \rank(\bm X)\le k, ||\bm X||_2 \le 1 \}},
	\end{align*}
	where $||\cdot||_2$ denotes the spectral norm (i.e., the largest singular value) of a matrix and  matrices $\bm A_1, \cdots \bm A_m \in \S_+^n$ denote the sample covariance matrices from $m$ different groups. Note that FSVD has a similar formulation as FPCA except that in FSVD, $\bm A_1, \cdots \bm A_m \in \Re^{n\times p}$ denote non-symmetric data matrices and the corresponding domain set  $\X\subseteq \Re^{n\times p}$. 
	Simple calculations show that the closed convex hull of domain set $\X$ admits a closed form and thus, its DWR can be written as
	\begin{align*}
		\text{(DWR)}   \max_{(z, \bm X) \in \Re\times\clconv(\X)} \left\{z: z\le \langle  \bm A_i, \bm X \rangle, \forall i \in [m]\right\},  \clconv(\X):=\{ \bm X\in \S_{+}^n: \tr(\bm X)\le k, ||\bm X||_2 \le 1 \}.
	\end{align*}
	In a similar vein, the DWR of FSVD can be obtained. For  FPCA and FSVD, their DWR exactness results are delegated to Section \ref{sec:obj}.

	\subsection{Review of Relevant Work}
	As far as we are concerned, existing works on the DWR exactness in literature mainly study special cases of our RCOP \eqref{eq_rank}, i.e., QCQP with a rank-$k=1$ constraint and Fair PCA with a rank-$k\ge 1$ constraint. 
	
	\noindent\textbf{QCQP with $k=1$.} For the DWR exactness of QCQP \eqref{qcqp}, extensive research has focused on deriving sufficient conditions  given that data $\{(b_i^l, \bm A_i, b_i^u)\}_{i\in [m]}$ in the $m$ LMIs are specified beforehand. 
	Furthermore, analyzing the DWR exactness of QCQP from a dual perspective comes into the focus in literature, which often requires the Slater condition or more strict assumptions.
	Please see the excellent survey  by \cite{kilincc2021exactness} and the references therein. 
	From a primal and geometrically interpretable angle, this paper first develops conditions that are simultaneously necessary and sufficient to guarantee the DWR exactness of RCOP \eqref{eq_rank}  for any $m$ LMIs.

	\textit{QCQP with $m\le 2$ constraints.} Early works  have used the S-lemma to explore specific problems of QCQP \eqref{qcqp} that admit the DWR exactness, which can date back to \cite{yakubovich1971s}. 
	It is known that QCQP \eqref{qcqp} with one or two quadratic constraints can yield the DWR exactness under some mild assumptions. 
	For example, the DWR achieves the objective exactness for the Trusted Region Subproblem (TRS), Generalized TRS (GTRS), and two-sided GTRS under Slater condition \citep{yakubovich1971s,polik2007survey,wang2015strong}, a class of QCQP with $m=1$ quadratic constraint. Beyond the objective exactness, it is proven by \cite{ho2017second,kilincc2021exactness}  that convex hull exactness holds for TRS and GTRS under Slater condition.  When the quadratic coefficient matrix $\bm Q_1$ in QCQP \eqref{qcqp} is nonzero,  the convex hull exactness also holds for the two-sided GTRS
	without assuming the Slater condition \citep{joyce2021convex}.
	In general, QCQP \eqref{qcqp} with $m=2$ quadratic constraints may not  have DWR exactness. 
	Existing works have attempted to investigate sufficient conditions under which the objective exactness holds under this setting (see, e.g., the celebrated papers \cite{ye2003new, ben2014hidden}). Another interesting result in \cite{ santana2020convex} shows that the convex hull of a set that consists of a quadratic equality constraint and a bounded polyhedron is second-order cone representable.
	Our proposed conditions can reprove and connect these exactness results discussed above in a unified way
	and more importantly, successfully get rid of the Slater condition. 
	
	%

	\textit{QCQP with $m$ constraints.} A recent  thread of  work on  QCQP \eqref{qcqp} aims to develop sufficient conditions for its DWR exactness  given $m$ quadratic constraints in contrast to the previously discussed  ones, which address $m\le 2$ constraints. 
	When the QCQP \eqref{qcqp} admits  bipartite graph structures, several studies have proven the DWR objective exactness (see, e.g., \cite{azuma2022exact,sojoudi2014exactness,kim2003exact}). Besides,  for the diagonal QCQP in which matrices $\bm Q_0$ and $\{\bm Q_i\}_{i\in [m]}$ in \eqref{qcqp}  are diagonal,
	\cite{burer2020exact,locatelli2020kkt} proposed sufficient conditions of DWR objective exactness. 
	Particularly, \cite{burer2020exact} extended the results to the general QCQP \eqref{qcqp}, providing the first-known sufficient condition of DWR objective exactness.
	Recently, a seminal study on QCQP  proposed   sufficient conditions and necessary conditions for the objective exactness and convex hull exactness under the assumption that the Lagrangian dual set of QCQP is strictly feasible and polyhedral \cite{wang2022tightness}. 
	Their follow-up work relaxed the polyhedral assumption \citep{wang2020geometric} and proved that the proposed sufficient conditions are also necessary for the convex hull exactness whenever the polar cone of the Lagrange dual set is facially exposed. 
	
	Another separate yet related line of work on QCQP provides the (usually non-simultaneously)  necessary conditions and sufficient conditions for the rank-one generated (ROG) property of a convex positive semidefinite cone  studied in \cite{hildebrand2016spectrahedral}.  Notably, a convex positive semidefinite cone satisfies ROG property if it can be written as the conic hull of all its rank-one elements (see, e.g., \cite{argue2022necessary, kilincc2021exactness}). For a QCQP, 
	in \cite{argue2022necessary}, the authors showed that the ROG property implies the convex hull exactness but not vice versa. Since our set $\C_{\rel}$ defined in \eqref{eq_2sets} is beyond the conic set and $k\ge 1$, the DWR exactness notions do not necessarily encompass this property, and this paper provides a simultaneously necessary and sufficient condition for  convex hull exactness for a QCQP (i.e., RCOP with $k=1$). 
	Recently, the work in \cite{dey2019study} proved that under some conditions of QCQP input data,  the convex hull of QCQP feasible set could
	be polyhedral or second-order cone representable. 
	Very recently, in \cite{dey2022obtaining, blekherman2022aggregations}, the authors provided the convex hull of a QCQP with $m=3$ quadratic constraints via aggregations under mild conditions on matrices $\{\bm Q_i\}_{i\in [m]}$.
	Overall, most studies reviewed here have investigated the DWR exactness  for QCQP from its dual perspective and relied on Slater conditions and hence on strong duality \citep{azuma2022exact,burer2020exact,locatelli2020kkt,wang2022tightness,wang2020geometric}. 
	To the best of our knowledge, all the existing results for QCQP can neither be directly applied nor be simply extended to our RCOP \eqref{eq_rank}.

	\noindent\textbf{FPCA with $k\ge 1$.} The DWR exactness of the FPCA has been studied by \cite{tantipongpipat2019multi}, where the authors proved the extreme point  exactness for any $m=2$ different groups of covariance matrices. Our proposed conditions successfully extend this result to the convex hull exactness for any $\tilde m=2$ linearly dependent groups of covariance matrices. We go beyond the positive semidefinite set to study FSVD problem, and we 
	prove that for FSVD, the convex hull exactness holds for any $\tilde m=2$ linearly dependent groups of data matrices.

	\vspace{-0.5em}
	\subsection{Summary of Main Contributions and Organization}
	This paper studies three notions of DWR exactness on the general RCOP \eqref{eq_rank} for any $\tilde m$-dimensional LMIs and derives simultaneously necessary and sufficient conditions for each notion, where our conditions are primal-oriented,  geometrically interpretable, 
	domain dependent,  and Lagrangian dual free. The main contributions and an outline of the remaining of the paper  are summarized below:
	\begin{enumerate}[(i)]
		\item Section \ref{sec:face} investigates the extreme point exactness and convex hull exactness.
		\begin{itemize}
			\item  
			In Subsection \ref{sec:loc} and Subsection \ref{sec:ext}, the locations of extreme points in set $\C_{\rel}$ on  $\clconv(\X)$ motivate us to derive a simultaneously necessary and sufficient condition for extreme point exactness. 
			\item For the extreme rays of the recession cone in set $\C_{\rel}$, we give in Subsection \ref{sec:ch} their 
			precise locations on the recession cone of $\clconv(\X)$, which results in a sufficient condition for convex hull exactness. 
			Besides,  this condition  becomes necessary and sufficient when the domain set $\X$ is conic.
			\item Subsection
			\ref{sec:app} presents how our proposed conditions refine and extend the existing exactness results for some special cases of the QCQP as detailed in the first six problems in \autoref{table:egs}. This contributes to the literature of QCQP in that our exactness results for these problems get rid of Slater condition
		\end{itemize}

		\item Section \ref{sec:obj} 
		investigates four different classes of
		objective exactness. 
		\begin{itemize}
			\item As illustrated in \autoref{fig:relation},  the objective exactness for any $\bm A_0$ and  for any $\bm A_0$ satisfying $\V_{\rel}>-\infty$ reduce to the extreme point exactness and convex hull exactness, respectively, which allows us to directly extend the proposed conditions in Section \ref{sec:face} to objective exactness under these two settings in Subsection \ref{sec:any}.
			\item Subsections \ref{sec:bind} and \ref{sec:dual}  further relax the necessary and sufficient conditions of objective exactness under two special classes of the  linear objective function, which satisfy a fixed number of binding constraints and a fixed number of nonzero optimal Lagrangian multipliers, respectively.
			\item The proposed conditions for DWR objective exactness can be applied to the remaining problems in \autoref{table:egs} in the literature, where we recover the objective exactness for QCQP with multiple constraints,  prove the convex hull exactness of FSVD, and generalize the others by either providing stronger exactness results or using less strict assumptions.
		\end{itemize}
		\item  Section	\ref{sec:con} concludes this paper.	
	\end{enumerate}

	\begin{table}[h] 
		\centering
		\caption{Example Applications of  Our Proposed Conditions}
		\label{table:egs}
		\begin{threeparttable}
			\setlength{\tabcolsep}{2pt}\renewcommand{\arraystretch}{1.2}
			\begin{tabular}{c| c| c |c| c}
				\hline  
				\multicolumn{1}{c|}{Application}&	\multicolumn{1}{c|}{Problem} &	\multicolumn{1}{c|}{ Setting} & 	\multicolumn{1}{c|}{DWR Exactness} & Assumption   \\
				\hline
				\multirow{8}{2cm}{\centering QCQP \eqref{qcqp} with $m=1$ quadratic constraint ($k=1$)}	&  \multirow{2}{*}{QCQP-1} &  \multirow{2}{3.6cm}{single  constraint} &  extreme point  & \multirow{2}{*}{--\tnote{i}} \\
				& &   &  (\autoref{cor:qp1})  & \\
				
				\cline{2-5}
				& \multirow{2}{*}{TRS} &  \multirow{2}{3.6cm}{single ball constraint} & convex hull & \multirow{2}{*}{--}\\
				& &  & (\autoref{cor:trs})  & \\
				\cline{2-5}
				&\multirow{2}{*}{GTRS} & \multirow{2}{3.6cm}{single inequality constraint}  & convex hull &\multirow{2}{*}{--} \\
				& &   &  (\autoref{cor:gtrs})  & \\
				\cline{2-5}
				&\multirow{2}{*}{Two-sided GTRS} & \multirow{2}{3.6cm}{single two-sided quadratic constraint} & convex hull &\multirow{2}{4.1cm}{$\bm Q_1\neq \bm 0$; \\ $-\infty  < b_1^l \le b_1^u < +\infty$}  \\
				& &   &  (\autoref{cor:tgtrs})  & \\
				\hline
				\hline
				\multirow{8}{2cm}{\centering QCQP \eqref{qcqp} with $m=2$  quadratic constraints ($k=1$)}&\multirow{2}{*}{HQP-2} & \multirow{2}{3.6cm}{homogeneous QCQP} &  \multirow{2}{2.7cm}{\centering extreme point\\ (\autoref{cor:hqp})}&\multirow{2}{*}{--}\\
				& &   &    & \\
				\cline{2-5}
				&\multirow{2}{*}{--\tnote{i}}& \multirow{2}{3.7cm}{one constraint is not binding } & \multirow{2}{2.7cm}{\centering objective (\autoref{cor:bind})} &\multirow{2}{4.1cm}{$\V_{\rel}>-\infty$;\\ bounded optimal set\tnote{ii}}\\
				& &   &    & \\
				\cline{2-5}
				&\multirow{3}{*}{--} & \multirow{3}{3.6cm}{one optimal dual variable is zero} & \multirow{3}{2.7cm}{\centering objective (\autoref{cor:dual})} &\multirow{3}{4.1cm}{$\V_{\rel}>-\infty$;\\
					bounded optimal set;\\
					relaxed Slater condition} \\
				& &   &    & \\
				& &   &    & \\
				\hline
				\hline
				\multirow{6}{2.5cm}{\centering QCQP \eqref{qcqp} with $m$ inequality quadratic constraints ($k=1$)}&\multirow{3}{*}{--} & \multirow{3}{3.6cm}{all off-diagonal elements are sign-definite } &  \multirow{3}{2.5cm}{\centering objective \\(\autoref{cor:diag})}& \multirow{3}{4.1cm}{cyclic structures} \\
				& &   &    & \\
								& &   &    & \\
				\cline{2-5}	
				&\multirow{3}{*}{--} & \multirow{3}{3.7cm}{diagonal QCQP with sign-definite linear terms } &  \multirow{3}{2.5cm}{\centering objective\\ (\autoref{cor:diag})}&\multirow{3}{*}{--} \\
				& &   &    & \\
								& &   &    & \\
				\hline
				\hline
				\multirow{4}{2cm}{\centering Fair Unsupervised Learning ($k\ge 1$)} & \multirow{2}{*}{Fair PCA} &  \multirow{2}{3.6cm}{$m=2$ groups} &  convex hull  & \multirow{2}{*}{--}  \\
				& &   &  (\autoref{cor:fpca})  & \\
				\cline{2-5}
				& \multirow{2}{*}{Fair SVD} &  \multirow{2}{3.6cm}{$m=2$ groups} & convex hull & \multirow{2}{*}{--}  \\
				& &  &  (\autoref{cor:fsvd})  & \\
				\hline
			\end{tabular}%
			\vspace{-.1em}
			\begin{tablenotes}
				\item[i] ``--" denotes either an empty assumption or an unspecified name of the problem
				\item[ii] The set of all optimal solutions to DWR is bounded
			\end{tablenotes}  
		\end{threeparttable}
		\vspace{-.3em}
	\end{table}
	
%
	
	\noindent	\textit{Notation:} The following notation is used throughout the paper.
	For a closed convex set $D$, let $ \aff(D) $ denote the affine hull of set $D$, let $ \dim(D) $ denote the dimension of set $D$, let $\reccone(D)$ denote the recession cone of set $D$ when it is unbounded, and let $\ri(D)$ denote the relative interior of set $D$. Given $m$ matrices $\{\bm A_i\}_{i\in [m]}$, their linear span is defined by $\spa(\{\bm A_i\}_{i\in [m]}):=\{\sum_{i\in [m]}\alpha_i \bm A_i: \bm \alpha \in \Re^m \}$.
	For a matrix $\bm X$, let $||\bm X||_2$ denote its spectral norm (i.e., the largest singular value), let $||\bm X||_*$ denote its nuclear norm (i.e., the sum of singular values) and $||\bm X||_*=\tr(\bm X)$ when $\bm X\in \S_+^n$. Additional notation will be introduced later as needed.

	\section{A Geometric View  of DWR Exactness: Simultaneously Necessary and Sufficient Conditions for  Extreme Point Exactness and Convex Hull Exactness} \label{sec:face}
	This section investigates simultaneously necessary and sufficient conditions on
	the DWR  exactness from  a  geometric view of its feasible set: extreme points and convex hull. 
	Specifically, extreme point exactness guarantees all the extreme points in the DWR set $\C_{\rel}$  to belong to the original feasible set $\C$ of RCOP \eqref{eq_rank}, i.e., $\ext(\C_{\rel})\subseteq  \C$, and \textit{convex hull exactness} requires $\C_{\rel}$ to  be identical to the closed convex hull of set $\C$, i.e., $\C_{\rel} = \clconv(\C)$.
	It is worth mentioning that when the convex hull of set $\C$ is closed, we have $\clconv(\C)=\conv(\C)$. 
	
	\subsection{Where  Are Extreme Points of Set $\C_{\rel}$ Located at  Set $\clconv(\X)$?}\label{sec:loc}
	
	In this subsection, we study the locations
	of extreme points in set $\C_{\rel}$, an important insight for the extreme point exactness. 
	Observe that given a domain set $\X$, set $\C_{\rel}$ in \eqref{eq_2sets} is constructed by intersecting its closed convex hull (i.e., $\clconv(\X)$) with $m$ LMIs; therefore, we are interested in identifying which faces of the set $\clconv(\X)$ contain all extreme points in the intersection set $\C_{\rel}$. It is desired to make the result hold for any $m$ LMIs.

To begin with, let us define the  face and its dimension:

\begin{definition}[Face, Proper Face, Exposed Face, \& Dimension] \label{def:face}
	A convex subset $F$ of a closed convex set $D$ is called  a face of $D$ if for any line segment $[a,b]\subseteq D$ such that $F \cap (a,b)\neq \emptyset$, we have $[a,b]\subseteq F$. A nonempty face $F$ of $D$  is called a proper face if $F \subsetneq D$.
	If a face $F$ can be represented as the intersection of set $D$ with its supporting hyperplane,  it is called an exposed face. 	The dimension of a face  is equal to the dimension of its affine hull.
\end{definition}

Throughout the paper, for any closed convex set $D$ and a positive integer $d$, we let $\F^{d}(D)$ denote the collection of faces in this set with dimension no larger than  $d$.
Some faces with different dimensions are illustrated in \autoref{fig:face}. Furthermore, these faces are proper and exposed.

\begin{figure}[ht]
	\vspace{-1.5em}
	\centering
	\subfigure[Zero-dimensional face]{
		{\includegraphics[width=0.18\columnwidth]{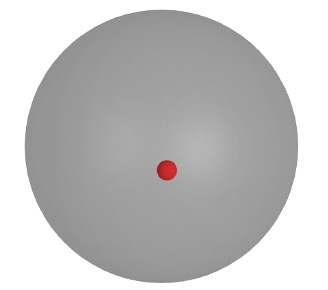}}\label{egd0}}
	~~~~~~~~~~~~
	\subfigure[One-dimensional face]{
		{\includegraphics[width=0.165\columnwidth]{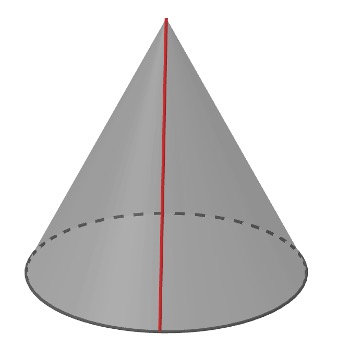}}	\label{egd1}}
	~~~~~~~~~~~~
	\subfigure[Two-dimensional face]{
		{\includegraphics[width=0.165\columnwidth]{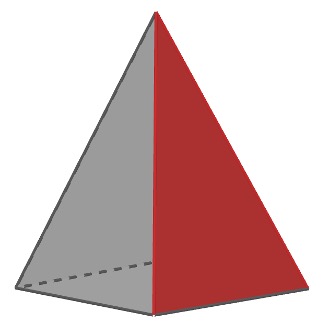}}	\label{egd2}}
	\caption{Examples of faces with different dimensions highlighted by red color}
	\label{fig:face}
	\vspace{-1em}
\end{figure}

The example below illustrates the basic idea of identifying where extreme points of set $\C_{\rel}$ are.
\begin{example}\label{eg1}\rm 
	Suppose domain set $\X:= \{\bm X\in \S_+^2: \rank(\bm X) \le 1, X_{12}=0, ||\bm X||_2\le 1 \}$ and $m=1$ LMI: $X_{11} \le 0.5$. Then we have $\clconv(\X) = \conv(\X) = \{\bm X\in \S_+^2: X_{12}=0, ||\bm X||_2\le 1 \}$ and the two sets defined in \eqref{eq_2sets} become
	\[\C :=  \{\bm  X \in \X: X_{11} \le 0.5\}, \ \ 
	\C_{\rel} :=  \{\bm  X \in \clconv(\X): X_{11} \le 0.5\}.\]
	
	For this example, we see that  the matrix in domain set $\X$ is of size two by two, positive semidefinite, and diagonal. Thus, introducing vector variable $\bm x:=(X_{11}, X_{22})$, we can equivalently recast $\X$ as a two-dimensional set, i.e.,  $\X:=\{\bm  x:=(X_{11}, X_{22})\in \Re_+^{2}: ||\bm x||_{0} \le 1, ||\bm x||_{\infty} \le 1\}$,
	as shown in Figure \ref{eg1x}, where the horizontal and vertical axes represent $X_{11}$ and $X_{22}$, respectively. Analogously, we can project the closed convex hull of the domain set $\X$ to the vector space as illustrated in Figure \ref{eg1conv}. 
	The solid red 
	line and the red shadow area in Figures \ref{eg1c} and \ref{eg1c1} represent sets $\C$ and $\C_{\rel}$ that are obtained by intersecting sets $\X$, $\clconv(\X)$ with a half-space $\{\bm x:=(X_{11}, X_{22}) \in \Re_+^2: X_{11}\le 0.5\}$, respectively. We observe in \autoref{fig:eg1} that given $m=1$ LMI, each  extreme point of  $\C_{\rel}$ is located at  a point or an edge in set $\clconv(\X)$ which is exactly a zero-- or one-dimensional face by \autoref{def:face}. Specifically, for the set $\C_{\rel}$ in Figure \ref{eg1c1}, extreme points $a_1$ and $a_2$ belong to  zero-dimensional faces of $\clconv(\X)$, and extreme points $a_3$ and $a_4$ belong to one-dimensional faces of $\clconv(\X)$.  Motivated by this observation in \autoref{eg1}, we will show the identity  between the dimension of $m$  LMIs (i.e., $\tilde{m}\le m$) and the largest dimension of faces in set $\clconv(\X)$ among which contain all extreme points in set  $\C_{\rel}$.
	\qedA
\end{example}

\begin{figure}[ht]
	\centering
	\vskip -0.15in
	\subfigure[$\X$]{
		{\includegraphics[width=0.18\columnwidth]{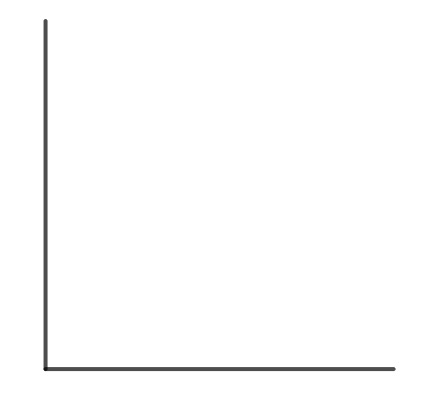}}	\label{eg1x}}
	~~~~~~
	\subfigure[$\clconv(\X)$]{
		{\includegraphics[width=0.18\columnwidth]{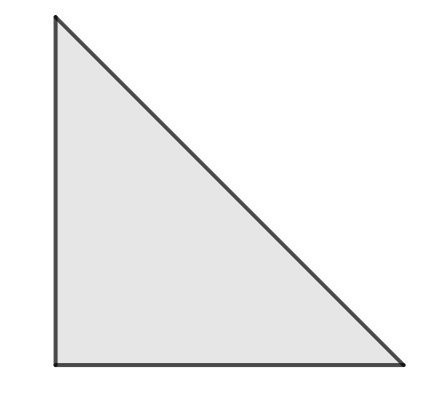}}\label{eg1conv}}
	~~~~~~
	\subfigure[$\C$]{
		{\includegraphics[width=0.17\columnwidth]{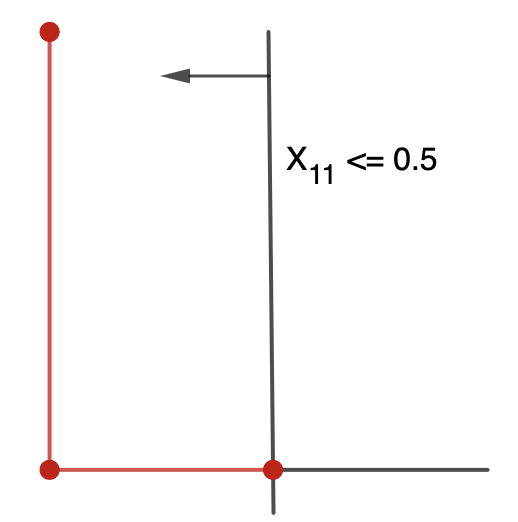}}	\label{eg1c}}
	~~~~~~
	\subfigure[$\C_{\rel}$]{
		{\includegraphics[width=0.18\columnwidth]{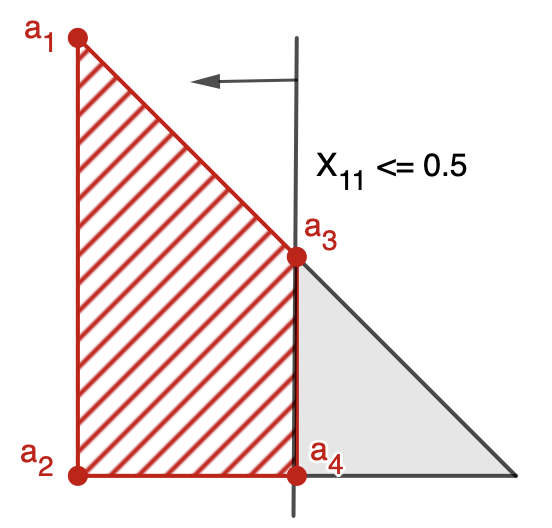}}	\label{eg1c1}}
	\caption{Illustration of sets in \autoref{eg1} and $\ext(\C_{\rel}) \nsubseteq \C$.}
	\label{fig:eg1}
	\vspace{-1.5em}
\end{figure}

\begin{restatable}{lemma}{lemext} \label{lem:genext}
	Given a  closed  convex set $D\subseteq\Re^{n}$,  for any $m$ linear constraints $\{\bm x \in \Re^{n}: b_i^l  \le \langle \bm a_i, \bm x \rangle \le b_i^u, \forall i\in [m]\}$,  each extreme point of the intersection  set $\H$ is contained in a face  of set $D$ with dimension at most $\tilde{m}$, where $\H:=D \cap \{\bm x \in \Re^{n}: b_i^l  \le \langle \bm a_i, \bm x \rangle \le b_i^u, \forall i\in [m]\}$ and $\tilde{m}\le m$ denotes the number of linearly independent vectors   $\{\bm a_i\}_{i\in [m]}$.
\end{restatable}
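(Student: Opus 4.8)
The plan is to show that the \emph{minimal} face of $D$ containing a given extreme point of $\H$ already has dimension at most $\tilde m$; since that minimal face is itself a face of $D$ containing the point, this proves the claim. First I would invoke the standard fact from convex analysis that every point of a closed convex set $D$ lies in the relative interior of a unique face of $D$, and this face is the smallest face containing the point. So fix an extreme point $\bar{\bm x}$ of $\H$, let $F$ be the face of $D$ with $\bar{\bm x}\in\ri(F)$, and set $L:=\aff(F)-\bar{\bm x}$, a linear subspace with $\dim(L)=\dim(F)$. Because $\bar{\bm x}\in\ri(F)$, for every direction $\bm d\in L$ there exists some $\epsilon>0$ with $\bar{\bm x}\pm\epsilon\bm d\in\ri(F)\subseteq D$.

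Next I would inspect the linear constraints at $\bar{\bm x}$. Partition $[m]$ into the \emph{binding} indices, where $\langle\bm a_i,\bar{\bm x}\rangle$ equals $b_i^l$ or $b_i^u$, and the \emph{non-binding} indices, where $b_i^l<\langle\bm a_i,\bar{\bm x}\rangle<b_i^u$ holds strictly. The key observation is that a perturbation $\bar{\bm x}\pm\epsilon\bm d$ stays in $\H$ provided that (i) $\bm d\in L$ and $\epsilon$ is small (to remain in $D$), (ii) $\epsilon$ is small enough that the strict non-binding inequalities persist, and (iii) $\langle\bm a_i,\bm d\rangle=0$ for every binding index $i$, so that an active one-sided bound (whether the lower or the upper one) is preserved in both the $+$ and the $-$ direction. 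Conditions (i)--(ii) hold automatically once $\epsilon$ is sufficiently small, so the only genuine requirement on the direction $\bm d$ is the homogeneous linear system in (iii).

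Then I would carry out the dimension count. The vectors $\{\bm a_i\}_{i\in[m]}$ span a subspace of dimension $\tilde m$, so in particular the binding vectors span a subspace of dimension at most $\tilde m$. Intersecting $L$ with the orthogonal complement of that span gives a subspace $W$ of admissible directions satisfying (iii), with $\dim(W)\ge\dim(L)-\tilde m=\dim(F)-\tilde m$. If we had $\dim(F)>\tilde m$, then $W$ would contain a nonzero direction $\bm d$, yielding a nondegenerate segment $[\bar{\bm x}-\epsilon\bm d,\ \bar{\bm x}+\epsilon\bm d]\subseteq\H$ with $\bar{\bm x}$ in its relative interior, contradicting that $\bar{\bm x}$ is an extreme point of $\H$. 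Hence $\dim(F)\le\tilde m$, and $F$ is the desired face of dimension at most $\tilde m$ containing $\bar{\bm x}$.

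The step I expect to be the most delicate is making the perturbation rigorous, namely selecting a single $\epsilon>0$ that simultaneously keeps $\bar{\bm x}\pm\epsilon\bm d$ inside $D$ via the relative-interior membership and preserves every strict non-binding inequality. Once the direction $\bm d$ from system (iii) is in hand, this amounts to taking a finite minimum over finitely many active and inactive constraints, but it must be stated cleanly so that the resulting segment is genuinely contained in $\H$ and the contradiction is airtight. Everything else rests on the clean rank identity $\dim(\spa(\{\bm a_i\}_{i\in[m]}))=\tilde m$, which feeds directly into the dimension inequality for $W$.
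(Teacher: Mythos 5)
Your proof is correct, and it reaches the conclusion by a somewhat different route than the paper. The paper argues by induction on the dimension of $\aff(D)$: for an extreme point whose minimal face is proper, it restricts to that face and invokes the inductive hypothesis together with the fact that a face of a face is a face; for an extreme point lying in $\ri(D)$, it performs the perturbation along a direction orthogonal to all the $\bm a_i$. You instead collapse the whole argument into a single step by invoking the relative-interior partition theorem (every point of $D$ lies in $\ri(F)$ for a unique minimal face $F$), working inside $L=\aff(F)-\bar{\bm x}$, and running the dimension count $\dim(W)\ge\dim(F)-\tilde m$ to produce the perturbation direction directly. The two arguments rest on the same core ingredients (relative-interior membership, a two-sided perturbation that respects the binding constraints, and a rank count on $\spa(\{\bm a_i\}_{i\in[m]})$), but your version avoids the induction entirely and yields the marginally sharper statement that the \emph{minimal} face containing the extreme point has dimension at most $\tilde m$, whereas the paper's induction delivers only the existence of some containing face of that dimension. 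The one point you flag as delicate --- choosing a single $\epsilon$ that keeps $\bar{\bm x}\pm\epsilon\bm d$ in $\ri(F)$ and preserves the finitely many strict inequalities --- is indeed just a finite minimum and poses no obstruction; note also that requiring $\langle\bm a_i,\bm d\rangle=0$ only for the binding indices, as you do, is harmless since the span of the binding vectors still has dimension at most $\tilde m$, so the same bound on $\dim(W)$ applies.
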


\begin{proof}
	We use induction to prove this result. 
	Suppose that set $D$ lies in a dimension-$d$ affine space.	When $d\le \tilde{m}$, it is trivial to verify the statement  since set $D$ itself is a $d$-dimensional face and $ \H \subseteq D$.
	Suppose  that the result holds for any $d\in[\bar{d}-1]$ with $\bar{d} \ge \tilde{m}+1$. Then we will  show that the result can be extended to the case when $d:=\bar{d}$ by contradiction. Let $\hat{\bm x}$  be an extreme point in set $ \H$. Suppose that the face  of the smallest dimension in set $D$ containing $\hat{\bm x}$ has a dimension $\hat{d}$  greater than $\tilde m$, denoted by $F^{\hat{d}}\subseteq D$ and $  \tilde{m}+1 \le \hat{d} \le \bar{d}$. Then there are two cases to be discussed depending on whether $\hat{d}=\bar{d}$ or not.
	\begin{enumerate}[(i)]
		\item Suppose $ \hat{d}< \bar{d}$. Since $F^{\hat{d}}$ is a $\hat{d}\le \bar{d}-1$ dimensional closed convex  set and $\hat{\bm x}$ is also an extreme point of the intersection set $F^{\hat{d}} \cap \{\bm x \in \Re^n: b_i^l  \le \langle \bm a_i, \bm x \rangle \le b_i^u, \forall i\in [m] \}$, by induction, $\hat{\bm x}$ belongs to a face of dimension up to $\tilde{m}$ in set $F^{\hat{d}}$. It is known that a face of a face of a closed convex set is also a face of this set (see section 18 in \cite{rockafellar2015convex}). Thus, the face in set $F^{\hat{d}}$ including the extreme point $\hat{\bm x}$  is also a face of set $D$ and is at most $\tilde{m}$-dimensional, which contradicts the fact that $\hat{d}\ge \tilde m +1$ is the smallest dimension of faces in set $D$  including  $\hat{\bm x}$.
		
		\item Suppose $\hat{d}=\bar{d}$. Since set $D$ itself is the one and only one $\bar{d}$-dimensional face, i.e., $F^{\hat{d}}=F^{\bar{d}}=D$, then $\hat{\bm x} $ does not belong to any proper face of $D$. According to proposition 3.1.5 in \cite{hiriart2004fundamentals}, the relative boundary of a closed convex set is equal to the union of all the exposed proper faces of this set. Therefore, $\hat{\bm x} $ must be in the relative interior of set $D$, and there exists a scalar $\alpha >0$ such that 
		\begin{align*}
			B(\hat{\bm x}, {\alpha})\cap \aff(D) \subseteq D,
		\end{align*}
		where $B(\hat{\bm x}, {\alpha}):=\{\bm x\in \Re^n: ||\hat{\bm x}-\bm x||_2 \le \alpha\}$.
		
		
		Given $n \ge \bar{d} \ge \tilde{m}+1$, for any $m$ vectors $\{\bm a_i\}_{i\in [m]}$ of dimension  $\tilde m$, there exists a nonzero vector $\bm y\in \Re^n$ satisfying $\langle\bm a_i, \bm y\rangle = 0$ for all $i\in [m]$. 	In addition, there exists a small scalar $0< \epsilon \le \alpha$ such that two vectors $\hat{\bm x} \pm \epsilon \bm y/||\bm y||_2 $  belong to set $ B(\hat{\bm x}, \alpha) \cap \aff(D)\subseteq D$. 
		Hence, both points $\hat{\bm x} \pm \epsilon \bm y/||\bm y||_2  $  belong to the intersection set $\H$  and we have
		$\hat{\bm x}  = \frac{1}{2}(\hat{\bm x} + \epsilon \bm y/||\bm y||_2) + \frac{1}{2} (\hat{\bm x} - \epsilon \bm y/||\bm y||_2),$
		which contradicts the fact that $\hat{\bm x}$  is an extreme point of set $\H$. 	This completes the proof. \hfill\qed
	\end{enumerate}
	%
\end{proof}

For 
ease of analysis,  \autoref{lem:genext} considers the intersection set in a vector space, which in fact, can cover any matrix-based set by reshaping a matrix into a long vector.
Since the result in \autoref{lem:genext} is independent of the vector length $n$ and holds for any vectors $\{\bm a_i\}_{i\in [m]}$, a natural generalization follows. 
\begin{corollary} \label{cor_lem:genext}
	Given a  closed convex set $D$ of matrix space $\Q$, for any $m$ LMIs $\{\bm X \in \Q: b_i^l  \le \langle \bm A_i, \bm X \rangle \le b_i^u, \forall i\in [m]\}$, suppose $\H:=D \cap \{\bm X \in \Q: b_i^l  \le \langle \bm A_i, \bm X \rangle \le b_i^u, \forall i\in [m]\}$. Then  each extreme point of the intersection set $\H$ is contained in a face  of set $D$ with dimension at most $\tilde{m}$, where $\tilde{m}$ denotes the number of linearly independent  matrices $\{\bm A_i\}_{i\in [m]}$.
\end{corollary}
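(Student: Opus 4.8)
The plan is to reduce the matrix statement directly to the vector statement already established in \autoref{lem:genext}, exploiting the fact that the inner product on a finite-dimensional matrix space is, after a fixed linear identification, just the Euclidean inner product on a vector space of the appropriate length. Concretely, I would fix an orthonormal basis of the ambient matrix space $\Q$ and let $\phi:\Q\to\Re^{N}$ denote the resulting isometric linear isomorphism (for instance, the vectorization that stacks the entries of a matrix into a long vector, with $N=n^2$, $n(n+1)/2$, or $np$ depending on whether $\Q$ equals $\Re^{n\times n}$, $\S^n$, or $\Re^{n\times p}$), normalized so that $\langle \bm X,\bm Y\rangle = \langle \phi(\bm X),\phi(\bm Y)\rangle$ for all $\bm X,\bm Y\in\Q$.

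The key steps, in order, are as follows. First, I would set $\bm x_i := \phi(\bm A_i)$ for each $i\in[m]$ and $D' := \phi(D)$, observing that $D'$ is a closed convex subset of $\Re^{N}$ since $\phi$ is a linear homeomorphism. Because $\phi$ preserves inner products, the matrix LMI $b_i^l \le \langle \bm A_i,\bm X\rangle \le b_i^u$ is equivalent to the scalar inequality $b_i^l \le \langle \bm x_i, \phi(\bm X)\rangle \le b_i^u$, so $\phi$ maps the matrix intersection set $\H$ bijectively onto the vector intersection set $\H' := D' \cap \{\bm x\in\Re^N: b_i^l \le \langle \bm x_i,\bm x\rangle \le b_i^u,\ \forall i\in[m]\}$. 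Second, since $\phi$ is an affine (indeed linear) bijection, it carries extreme points to extreme points and faces to faces while preserving dimensions; in particular $\phi$ maps faces of $D$ onto faces of $D'$ of the same dimension, and $\hat{\bm X}\in\ext(\H)$ if and only if $\phi(\hat{\bm X})\in\ext(\H')$. Third, I would note that $\phi$ preserves linear independence, so the number of linearly independent vectors among $\{\bm x_i\}_{i\in[m]}$ equals the number $\tilde m$ of linearly independent matrices among $\{\bm A_i\}_{i\in[m]}$. Applying \autoref{lem:genext} to $D'$ and $\{\bm x_i\}_{i\in[m]}$ then gives that each extreme point of $\H'$ lies in a face of $D'$ of dimension at most $\tilde m$; pulling this back through $\phi^{-1}$ yields that each extreme point of $\H$ lies in a face of $D$ of dimension at most $\tilde m$, which is exactly the claim.

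I do not expect any genuine obstacle here, since the corollary is essentially the content of \autoref{lem:genext} reindexed through a coordinatization of the matrix space — indeed the remark immediately preceding the statement already flags that the lemma ``can cover any matrix-based set by reshaping a matrix into a long vector.'' The only point requiring a little care is to verify that the chosen $\phi$ is genuinely inner-product preserving, so that the LMIs translate to the Euclidean inner products used in \autoref{lem:genext}; this is routine once one picks an orthonormal basis (for $\S^n$ one must use the scaled off-diagonal convention so that $\langle \bm A,\bm X\rangle$ matches $\langle\phi(\bm A),\phi(\bm X)\rangle$). One should also confirm that the hypothesis ``$\tilde m \le m$'' and the independence of the bound from the vector length $N$ transfer unchanged, but both are immediate since \autoref{lem:genext} is stated for arbitrary $n$ and arbitrary vectors $\{\bm a_i\}_{i\in[m]}$.
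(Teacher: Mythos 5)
Your proposal is correct and follows exactly the route the paper intends: the paper gives no separate proof of this corollary, simply remarking that \autoref{lem:genext} ``can cover any matrix-based set by reshaping a matrix into a long vector,'' and your argument is just a careful spelling-out of that vectorization (an inner-product-preserving linear isomorphism carries faces to faces, extreme points to extreme points, and preserves linear independence, so the lemma transfers verbatim). No gaps.
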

The result in \autoref{cor_lem:genext} can be applied to the extreme point characterization of set  $\C_{\rel}$ by letting $D:=\clconv(\X)$ and $\H:=\C_{\rel}$. Given a domain set $\X$, when intersecting its closed convex hull with any $\tilde m$-dimensional LMIs,  \autoref{cor_lem:genext} implies that only those no larger than $\tilde m$-dimensional	faces of set $\clconv(\X)$, i.e.,  $\F^{\tilde m}(\clconv(\X))$, play a critical role in generating the 
extreme points in the DWR set $\C_{\rel}$. This motivates us to explore necessary and  sufficient conditions for 
the DWR exactness based on $\F^{\tilde m}(\clconv(\X))$. It should be emphasized that the results in \autoref{lem:genext} and \autoref{cor_lem:genext}  only require a closed convex set and thus can be applied to the closed convex hull of any domain set $\X$. 

\subsection{A Simultaneously Necessary and Sufficient Condition for Extreme Point Exactness} \label{sec:ext}
By exploring \autoref{cor_lem:genext}, this subsection presents a simultaneously necessary and sufficient condition under which the DWR problem \eqref{eq_rel_rank} achieves the extreme point exactness, i.e., $\ext(\C_{\rel})\subseteq \C$. 

We use \autoref{eg1} to reveal our main idea. 
We observe in \autoref{fig:eg1} that the extreme points $a_1, a_2, a_4$ of  set $\C_{\rel}$ belong to set $\C$ as the points and edges (i.e., zero and one-dimensional faces) where they locate in $\clconv(\X)$  belong to domain set $\X$. By contrast, the one-dimensional face in set $\clconv(\X)$ including the extreme point $a_3$ of  set $\C_{\rel}$ is not contained in domain set $\X$, and  $a_3$ does not belong to set $\C$, either. Motivated by this example, whether an extreme point of set $\C_{\rel}$ belongs to set $\C$ highly depends on whether its related face in set $\clconv(\X)$ is a subset of domain set $\X$, which offers a simultaneously necessary and sufficient condition  as below.

\begin{restatable}{theorem}{themext} \label{them:ext}
	Given a nonempty closed  domain set $\X$ with its closed convex hull $\clconv(\X)$ being line-free, the followings are equivalent:
	\begin{enumerate}[(a)]
		\item \textbf{Inclusive Face:} Any no larger than $\tilde{m}$-dimensional face of set $\clconv(\X)$ is contained in the domain set $\X$, i.e., $F \subseteq \X$ for all $F\in \F^{\tilde{m}}(\clconv(\X))$;
		\item \textbf{Extreme Point Exactness:} All the extreme points of set $\C_{\rel}$ belong to set $\C$ for any $m$ LMIs of dimension $\tilde{m}$ in RCOP \eqref{eq_rank}, i.e., $\ext(\C_{\rel}) \subseteq \C$.
	\end{enumerate}
\end{restatable}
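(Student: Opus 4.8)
The plan is to prove the two implications separately, with \textbf{(a)}$\Rightarrow$\textbf{(b)} being essentially a one-line consequence of the machinery already built, and \textbf{(b)}$\Rightarrow$\textbf{(a)} (via its contrapositive) carrying all the real content. For \textbf{(a)}$\Rightarrow$\textbf{(b)}, I would fix any $m$ LMIs of dimension $\tilde m$ and take an arbitrary $\hat{\bm X}\in\ext(\C_{\rel})$. Applying \autoref{cor_lem:genext} with $D:=\clconv(\X)$ and $\H:=\C_{\rel}$ produces a face $F$ of $\clconv(\X)$ with $\dim(F)\le\tilde m$ that contains $\hat{\bm X}$, i.e. $F\in\F^{\tilde m}(\clconv(\X))$. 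Hypothesis (a) gives $F\subseteq\X$, so $\hat{\bm X}\in\X$; since $\hat{\bm X}\in\C_{\rel}$ already satisfies every LMI, we conclude $\hat{\bm X}\in\C$, whence $\ext(\C_{\rel})\subseteq\C$.

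For \textbf{(b)}$\Rightarrow$\textbf{(a)} I would argue by contraposition: assume some $F\in\F^{\tilde m}(\clconv(\X))$ fails $F\subseteq\X$, pick $\bm X^*\in F\setminus\X$, and exhibit one dimension-$\tilde m$ LMI system for which $\ext(\C_{\rel})\nsubseteq\C$. The first step is to pass to the unique minimal face $F^*$ of $\clconv(\X)$ containing $\bm X^*$; since $F$ is itself a face containing $\bm X^*$ we get $F^*\subseteq F$, hence $d:=\dim(F^*)\le\tilde m$, and moreover $\bm X^*\in\ri(F^*)$. The key geometric observation is that every direction $\bm Y$ with $\bm X^*\pm\bm Y\in\clconv(\X)$ must lie in the $d$-dimensional subspace $L:=\aff(F^*)-\bm X^*$: such a $\bm Y$ produces a segment of $\clconv(\X)$ having $\bm X^*$ in its relative interior, and the face property of $F^*$ forces the whole segment into $F^*\subseteq\aff(F^*)$.

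The construction then uses this confinement of feasible directions. I would select matrices $\bm A_1,\dots,\bm A_d$ whose restrictions to $L$ are linearly independent (possible since $\dim L=d$, so these restrictions form a basis of $L^*$) and impose the equality LMIs $\langle\bm A_i,\bm X\rangle=\langle\bm A_i,\bm X^*\rangle$ (that is, $b_i^l=b_i^u$) for $i\in[d]$. To raise the LMI dimension to exactly $\tilde m$, I append $\tilde m-d$ further matrices $\bm A_{d+1},\dots,\bm A_{\tilde m}$, linearly independent from the others, with bounds slack at $\bm X^*$. Then $\bm X^*\in\C_{\rel}$; and if $\bm X^*\pm\bm Y\in\C_{\rel}$ with $\bm Y\neq\bm 0$, the observation gives $\bm Y\in L$ while the equalities force $\langle\bm A_i,\bm Y\rangle=0$ for all $i\in[d]$, so injectivity of $\bm Y\mapsto(\langle\bm A_i,\bm Y\rangle)_{i\in[d]}$ on $L$ yields $\bm Y=\bm 0$, a contradiction. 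Hence $\bm X^*\in\ext(\C_{\rel})$ for this dimension-$\tilde m$ system, yet $\bm X^*\notin\X\supseteq\C$, so (b) fails.

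The main obstacle is entirely in the converse construction: guaranteeing that $\bm X^*$ genuinely becomes an \emph{extreme} point while keeping the LMI dimension at precisely $\tilde m$. This hinges on the minimal-face reduction, which places $\bm X^*$ in the relative interior of $F^*$ and thereby confines \emph{all} local feasible directions of $\clconv(\X)$ to the $d$-dimensional subspace $L$; only then can $d\le\tilde m$ equality LMIs annihilate every escape direction. Without this reduction one cannot rule out directions leaving $F$ into the interior of $\clconv(\X)$, and some care is needed so the slack padding raises the dimension to exactly $\tilde m$ rather than overshooting or undershooting.
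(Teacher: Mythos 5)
Your proposal is correct and follows essentially the same route as the paper: the forward direction is the identical one-line application of \autoref{cor_lem:genext}, and the converse is the same construction of $\tilde m$-dimensional equality LMIs through a point of $F\setminus\X$ that forces it to be an extreme point of $\C_{\rel}$ lying outside $\C$. Your passage to the minimal face $F^*$ and the direction-confinement argument is just a more explicit justification of what the paper asserts directly, namely that one can choose the LMIs so that $F\cap\H$ is the singleton $\{\hat{\bm X}\}$ and then extremality follows from the face property.
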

\begin{proof}
	Let us prove the two directions of the equivalence, respectively.
	\begin{enumerate}[(i)]
		\item  $F \subseteq \X$ for all $F\in \F^{\tilde{m}}(\clconv(\X))$ $ \Longrightarrow \ext(\C_{\rel})\subseteq \C $.
		
		Let $\hat{\bm X}$ be an extreme point in set $\C_{\rel}$. According to \autoref{cor_lem:genext}, there exists a face $F$ in $\F^{\tilde{m}}(\clconv(\X))$ satisfying  $\hat{\bm X} \in F$. Since $F \subseteq \X$, it follows that $\hat{\bm X}\in \X\cap \C_{\rel} \subseteq \C$. Therefore,  $\ext(\C_{\rel})\subseteq \C $.
		
		\item  $\ext(\C_{\rel})\subseteq \C \Longrightarrow$  $F \subseteq \X$ for all $F\in \F^{\tilde{m}}(\clconv(\X))$.
		
		Suppose that $F \in \F^{\tilde{m}}(\clconv(\X))$ denotes a face of dimension at most $\tilde m$ in $\clconv(\X)$  not contained in set $\X$. Let $\hat{\bm X}$ be a point satisfying $\hat{\bm X} \in F \setminus \X $.
		Since face $F$ is no larger than $\tilde{m}$-dimension, we can construct  an $\tilde m$-dimensional LMI system  $\H:=\{\bm X\in \Q: \langle{\bm A}_i, \bm X\rangle = \langle{\bm A}_i, \hat{\bm X}\rangle, \forall i \in [m] \}$ such that
		the intersection $\aff(F)\cap \H =\{\hat{\bm X}\}=F\cap \H$ is zero-dimensional and thus a singleton.
		Then given the LMI system $\H$, the DWR feasible set defined in \eqref{eq_2sets} reduces to
		\[ \hat{\C}_{\rel}:=\{\bm X\in \clconv(\X): \langle{\bm A}_i, \bm X\rangle = \langle{\bm A}_i, \hat{\bm X}\rangle, \forall i \in [m] \} = \clconv(\X)\cap \H. \]
		We claim that $\hat{\bm X}\in \ext(\hat{\C}_{\rel})$.  If not,  then $\hat{\bm X}$ can be written as the convex combination below 
		$$\hat{\bm X}= \alpha \bm X_1 + (1-\alpha)\bm X_2, \ \ 0< \alpha <1,$$
		where $\bm X_1, \bm X_2 \in \hat{\C}_{\rel}$ are two distinct points. Since $\bm X_1, \bm X_2 \in \hat{\C}_{\rel} \subseteq \clconv(\X)$ and $\hat{\bm X}\in F$, according to \autoref{def:face} of a face, $\bm X_1$ and $\bm X_2$ must also belong to  the face $F$. In addition,  we have $\bm X_1, \bm X_2 \in \hat{\C}_{\rel} \subseteq \H$.  These results indicate that $\bm X_1, \bm X_2 \in F \cap \H$, contradicting  that  the intersection set $F \cap \H$ is a singleton.

		Since $\hat{\bm X}$ is an extreme point in set $\hat{\C}_{\rel}$, according to the presumption that $\ext(\hat{\C}_{\rel})\subseteq\hat{\C}$, where 
		\[ \hat{\C}:=\{\bm X\in \X: \langle{\bm A}_i, \bm X\rangle = \langle{\bm A}_i, \hat{\bm X}\rangle, \forall i \in [m] \} = \X \cap \H, \]
		we must have $\hat{\bm X}\in \hat{\C}\subseteq \X$, a contradiction.
		%
		Therefore,  $F$ must be contained in set $\X$. 
		\hfill\qed
	\end{enumerate}	
\end{proof}

We remark that (i) set $ \F^{\tilde{m}}(\clconv(\X))$ refers to the collection of all faces in set $\clconv(\X)$ up to dimension $\tilde{m}$, and any $\ge \tilde m$-dimensional face $F$ in set $\clconv(\X)$ is equal to $\clconv(\X)$ itself when the dimension of $\clconv(\X)$ is less than $\tilde{m}$; and (ii) Part (a) in \autoref{them:ext} provides a simultaneously necessary and sufficient condition of the extreme point exactness.

As aforementioned in \autoref{fig:relation}, the extreme point exactness sheds light on the convex hull exactness, where  a compact set $\C_{\rel}$ exactly equals the convex combination  of its extreme points. The relation between them motivates us to further investigate the  convex hull exactness by leveraging the  faces in set $\clconv(\X)$.


\subsection{One Sufficient and Two Simultaneously Necessary and Sufficient Conditions for Convex Hull Exactness}\label{sec:ch}
In this subsection, we study under which conditions the DWR \eqref{eq_rel_rank} attains convex hull exactness for any $\tilde m$-dimensional LMIs. 
As illustrated below, more than the condition in \autoref{them:ext}   may be needed 
to guarantee the  DWR  convex hull exactness.
\begin{example}\label{eg2}\rm
	Suppose domain set $\X:=\{\bm X \in \S_+^2: \rank(\bm X)\le 1\}$. Then $\conv(\X) = \clconv(\X) :=\S_+^2$. Let us construct the following intersection sets with $m=2$ LMIs.
	\begin{align*}
		&\C:=\{\bm X \in \S_+^2:  X_{12}= 0, X_{11}=1,\rank(\bm X) \le 1 \},  \ \ \C_{\rel}:=\{\bm X \in \S_+^n:  X_{12} =  0, X_{11} = 1 \}.
	\end{align*}
	In this example, the domain set is equivalent to $\X=\{\bm X \in \Re^{2\times 2}: X_{11} X_{22} = X_{12}^2, X_{11} \ge 0, X_{22}\ge 0 \}$. 
	We see that both sets $\X$ and $\clconv(\X)$ are unbounded, as shown in Figure \ref{eg2x} and Figure \ref{eg2convx}, respectively. We also see that 
	\begin{enumerate}[(i)]
		\item The domain set $\X$ is a three-dimensional unbounded surface, i.e., the boundary of its convex hull;
		\item Any zero or one-dimensional face in $\clconv(\X)$ is contained in the domain set $\X$ and $\clconv(\X)$ does not have any two-dimensional face, which is also trivially contained in set $\X$; and
		\item  The only three-dimensional face in $\clconv(\X)$ is itself and does not belong to the domain set $\X$.
	\end{enumerate}	
	Hence, the domain set $\X$ contains  any face of $\clconv(\X)$ with dimension no larger than two, i.e.,  $F \subseteq \X$ for all $F\in \F^{2}(\clconv(\X))$. 
	When intersecting set $\X$ with the following $m=2$ LMIs:
	$X_{12}= 0, X_{11}=1$, we have that $m=\tilde{m}=2$. 
	The resulting set $\C$ is a singleton, marked as a red solid point in Figure \ref{eg2c}, while the DWR set $\C_{\rel}$ illustrated in Figure \ref{eg2crel}  is a ray. 
	
	Note
	that the extreme point exactness holds, i.e., $\ext(\C_{\rel}) \subseteq C$,  as indicated in \autoref{them:ext}. However, 
	the closed convex hull of set $\C$ is itself and is not identical to set $\C_{\rel}$, i.e., the convex hull exactness fails, $\clconv(\C) \neq \C_{\rel}$.
	\qedA
\end{example}

\begin{figure}[ht]
	\centering
	\subfigure[$\X$]{
		{\includegraphics[width=0.15\columnwidth]{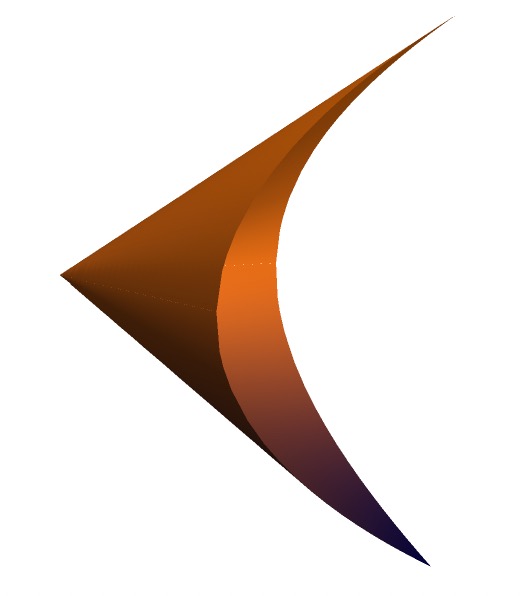}}	\label{eg2x}}
	~~~~~~
	\subfigure[$\clconv(\X)$]{
		{\includegraphics[width=0.15\columnwidth]{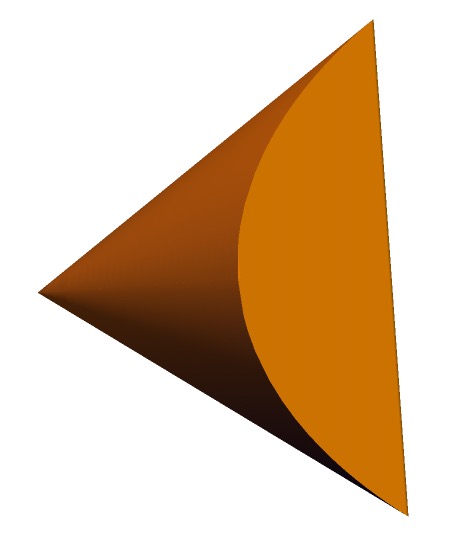}}	\label{eg2convx}}
	~~~~~~
	\subfigure[$\C$ ]{
		{\includegraphics[width=0.22\columnwidth]{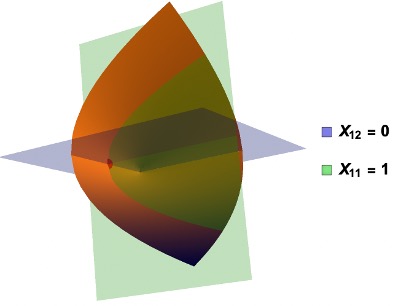}}	\label{eg2c}}
	~~~~~~
	\subfigure[$\C_{\rel}$ ]{
		{\includegraphics[width=0.22\columnwidth]{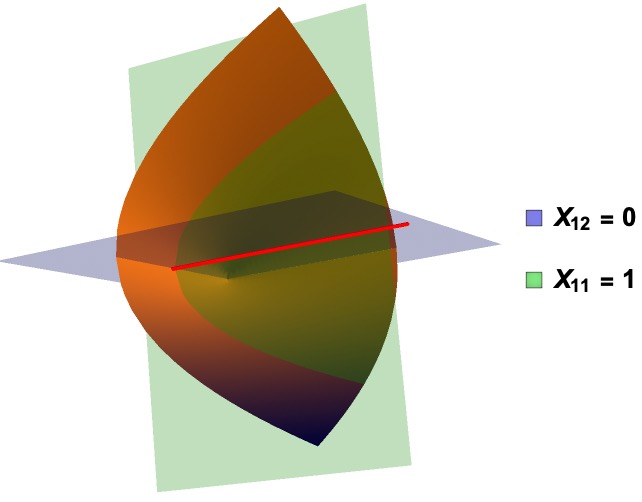}}	\label{eg2crel}}
	\caption{Illustration of Sets  in \autoref{eg2} with $\X=\{\bm X\in \S_+^2: \rank(\bm X) \le 1\}$ and $\ext(\C_{\rel}) \subseteq \C$,  $\clconv(\C) \neq \C_{\rel}$.  }
	\label{eq2:unbd}
\end{figure}

Inspired by \autoref{eg2}, the convex hull exactness requires stronger conditions than that of extreme point exactness in \autoref{them:ext}. 
One exemplary condition is that 
set $\C_{\rel}$ is bounded, which enables us to derive a simultaneously necessary and sufficient condition as below. Note that $\clconv(\C)=\conv(\C)$ when set $\C$ is compact.
\begin{restatable}{theorem}{themequalset}\label{them:exactch} 
	Given a nonempty closed domain set $\X$, 
	the followings  are equivalent:
	\begin{enumerate}[(a)]
		\item \textbf{Inclusive Face:} Any no larger than $\tilde{m}$-dimensional face of  set $\clconv(\X)$ is contained in the domain set $\X$, i.e.,  $F \subseteq \X$ for all $F\in \F^{\tilde{m}}(\clconv(\X))$;
		\item \textbf{Extreme Point Exactness:} All the extreme points of set $\C_{\rel}$ belong to set $\C$  for any $m$ LMIs of dimension $\tilde{m}$ in RCOP \eqref{eq_rank} such that set $\C_{\rel}$ is bounded, i.e., $\ext(\C_{\rel}) \subseteq \C$;
		\item \textbf{Convex Hull Exactness:} The feasible set $\C_{\rel}$ is equal to the  convex hull of set $\C$  for any $m$ LMIs of dimension $\tilde{m}$ in RCOP \eqref{eq_rank} such that set $\C_{\rel}$ is bounded, i.e., $\C_{\rel}=\conv(\C)$.
	\end{enumerate}
\end{restatable}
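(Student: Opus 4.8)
The plan is to prove the equivalence through the cycle (a) $\Rightarrow$ (c) $\Rightarrow$ (b) $\Rightarrow$ (a), reusing \autoref{them:ext} to discharge all of the extreme-point bookkeeping and concentrating the new work on the last arrow. Two of the three implications are short. For (a) $\Rightarrow$ (c), fix any $\tilde{m}$-dimensional LMIs for which $\C_{\rel}$ is bounded; being also closed, $\C_{\rel}$ is then compact, so Minkowski's theorem (the finite-dimensional Krein--Milman theorem) gives $\C_{\rel} = \conv(\ext(\C_{\rel}))$. Since (a) is exactly the Inclusive Face condition, \autoref{them:ext} upgrades it to extreme point exactness, whence $\ext(\C_{\rel}) \subseteq \C$ and $\C_{\rel} \subseteq \conv(\C)$; as $\conv(\C) \subseteq \C_{\rel}$ holds automatically, we obtain $\C_{\rel} = \conv(\C)$. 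For (c) $\Rightarrow$ (b), convex hull exactness gives $\C_{\rel} = \conv(\C)$, and the elementary fact $\ext(\conv(S)) \subseteq S$ valid for every set $S$ --- an extreme point of a convex hull admits a Carath\'{e}odory representation as a convex combination of points of $S$ and must coincide with one of them --- yields $\ext(\C_{\rel}) \subseteq \C$.

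The substance lies in (b) $\Rightarrow$ (a), where, in contrast to \autoref{them:ext}, hypothesis (b) supplies extreme point exactness \emph{only} for LMI systems with bounded $\C_{\rel}$. I would proceed by contraposition: assuming (a) fails, I construct a \emph{bounded} instance in which extreme point exactness is violated. Thus let $F \in \F^{\tilde{m}}(\clconv(\X))$ be a face not contained in $\X$ and pick $\widehat{\bm X} \in F \setminus \X$; replacing $F$ by the minimal face containing $\widehat{\bm X}$, I may assume $\widehat{\bm X}$ lies in the relative interior of $F$ and $d := \dim(F) \le \tilde{m}$. Exactly as in \autoref{them:ext}, once I impose $\tilde{m}$ equality LMIs $\langle \bm A_i, \bm X\rangle = \langle \bm A_i, \widehat{\bm X}\rangle$ whose solution set $\H$ meets $\aff(F)$ only at $\widehat{\bm X}$, the face property forces $\widehat{\bm X}$ to be an extreme point of $\C_{\rel} = \clconv(\X) \cap \H$, while $\widehat{\bm X} \notin \X$ gives $\widehat{\bm X} \notin \C$. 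What \autoref{them:ext} does not provide, and what I must add, is a choice of the span $S := \spa(\{\bm A_i\}_{i \in [\tilde{m}]})$ that \emph{also} renders $\C_{\rel}$ bounded.

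Securing this boundedness within the budget of only $\tilde{m}$ directions is the step I expect to be the main obstacle. Write $R := \reccone(\clconv(\X))$, $N := \dim(\Q)$, and $L := \aff(F) - \widehat{\bm X}$, a $d$-dimensional subspace. Since $\H = \widehat{\bm X} + S^{\perp}$, one has $\reccone(\C_{\rel}) = R \cap S^{\perp}$, so $\C_{\rel}$ is bounded precisely when $R \cap S^{\perp} = \{\bm 0\}$, whereas the isolation of $\widehat{\bm X}$ forces $L \cap S^{\perp} = \{\bm 0\}$. The standing assumption that $\clconv(\X)$ contains no line makes $R$ a pointed cone, so its dual has nonempty interior and there exists $\bm C$ with $\langle \bm C, \bm D\rangle > 0$ for all $\bm D \in R \setminus \{\bm 0\}$; requiring $\bm C \in S$, i.e.\ $S^{\perp} \subseteq \bm C^{\perp}$, already forces $R \cap S^{\perp} \subseteq R \cap \bm C^{\perp} = \{\bm 0\}$. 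It remains to reconcile this with the isolation requirement inside dimension $\tilde{m}$: for $d \ge 1$ I would pick $\bm C$ in the interior of the dual cone but outside the proper subspace $L^{\perp}$ (possible because the interior is $N$-dimensional), so that $L \cap \bm C^{\perp}$ has dimension $d - 1$; the count $(N - \tilde{m}) + (d - 1) \le N - 1$, equivalent to $d \le \tilde{m}$, then permits an $(N - \tilde{m})$-dimensional subspace $S^{\perp} \subseteq \bm C^{\perp}$ with $S^{\perp} \cap L = \{\bm 0\}$, and any basis of $S = (S^{\perp})^{\perp}$ furnishes the required matrices $\{\bm A_i\}_{i\in[\tilde{m}]}$. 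The case $d = 0$ needs no adjustment of $\bm C$ since $L = \{\bm 0\}$. This produces a bounded $\C_{\rel}$ with an extreme point $\widehat{\bm X} \notin \C$, contradicting (b) and completing the cycle.
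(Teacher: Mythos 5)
Your proposal is correct, and for two of the three arrows it coincides with the paper's argument: the paper proves $(a)\iff(b)$ by citing \autoref{them:ext} and then gets $(b)\iff(c)$ exactly as you do, from compactness of $\C_{\rel}$ and Minkowski's theorem ($\C_{\rel}=\conv(\ext(\C_{\rel}))$ plus $\ext(\conv(\C))\subseteq\C$). Where you genuinely depart from the paper is the direction $(b)\Rightarrow(a)$. The paper disposes of it with the single sentence ``according to \autoref{them:ext}, we have $(a)\iff(b)$,'' but \autoref{them:ext} establishes necessity only under the stronger hypothesis that extreme point exactness holds for \emph{all} $\tilde m$-dimensional LMI systems, whereas statement $(b)$ here grants it only for those producing a \emph{bounded} $\C_{\rel}$; the witness system constructed in the proof of \autoref{them:ext} need not yield a bounded feasible region, so the citation alone does not close the loop. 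You correctly identified this and supplied the missing construction: using the standing line-freeness assumption, $\reccone(\clconv(\X))$ is pointed, so its dual cone has interior and you can force a strictly positive functional $\bm C$ into $\spa(\{\bm A_i\})$, which kills $\reccone(\C_{\rel})=\reccone(\clconv(\X))\cap S^{\perp}$ and hence bounds $\C_{\rel}$; the dimension count $(N-\tilde m)+(d-1)\le N-1$ shows this is compatible with isolating $\hat{\bm X}$ inside $\aff(F)$. This is a sound and worthwhile strengthening: it makes the necessity direction of \autoref{them:exactch} self-contained and rigorous, at the cost of a somewhat longer argument than the paper's terse appeal to \autoref{them:ext}. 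The only cosmetic remark is that you should state explicitly that you are invoking the paper's standing assumption that $\clconv(\X)$ is line-free (the theorem as written omits the qualifier), and that the $\tilde m$ constructed equalities are padded with $m-\tilde m$ trivial LMIs to match the quantifier ``any $m$ LMIs of dimension $\tilde m$.''
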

\begin{proof} 
	According to \autoref{them:ext}, we have  $(a) \iff (b)$. It remains to prove $(b) \iff (c)$.
	
	First, we observe that set $\C \subseteq \C_{\rel}$ is also compact and thus $\conv(\C)$ is compact. Besides, since the compact convex set $\C_{\rel}$ matches the convex hull of its extreme points, i.e., $\C_{\rel} = \conv(\ext(\C_{\rel}))$, it is evident that $\C_{\rel}=\conv(\C) \iff \ext(\C_{\rel}) \subseteq \C$. This completes the proof. \qed
\end{proof}

A considerable amount of literature has investigated sufficient conditions for DWR exactness (see, e.g., \cite{azuma2022exact, kilincc2021exactness,polik2007survey}); however, such  studies can only deal with QCQP, a particular case of our RCOP \eqref{eq_rank}. Compared to them, the result in \autoref{them:exactch} has the following favorable aspects: 
\begin{enumerate}[(i)]
	\item 
	When set $\C_{\rel}$ is bounded, Part (a) in \autoref{them:exactch} gives the first-known simultaneously necessary and sufficient condition of all the three notions of DWR exactness for any $\tilde m$-dimensional LMIs, 
	which is beyond the scope of QCQP and can be useful to refine many existing results in the following subsection;
	\item From a novel yet geometrically interpretable perspective, we show that the DWR exactness only depends on some crucial faces in set $\X$, i.e., those containing extreme points of set $\C_{\rel}$;
	\item Most exactness conditions proposed in the literature rely on some restricted assumptions, e.g., the Lagrangian dual set of DWR \eqref{eq_rel_rank} needs to be  polyhedral in \cite{kilincc2021exactness}, which prevents the results from being applied to general QCQP.
	The assumption adopted in  our \autoref{them:exactch} is quite mild, namely the compactness of set $\C_{\rel}$; and
	\item Finally, the result in \autoref{them:exactch} is quite general and can be applied to any closed domain set $\X$.
\end{enumerate}

When dealing with the  convex hull exactness in \autoref{them:exactch}, we assume that set $\C_{\rel}$  is bounded. 
In \autoref{eg2}, it implies that the necessary and sufficient condition in \autoref{them:exactch} may be insufficient to guarantee the convex hull exactness of an unbounded set $\C_{\rel}$. 
To be specific, the unbounded set $\C_{\rel}$ is a half-line while  set $\C$ is a singleton in \autoref{eg2}.
The following theorem provides a sufficient condition for the convex hull exactness under the unbounded setting.
	The extreme ray defined below is the key ingredient that distinguishes an unbounded closed convex set from a compact one. 
	%
	
	\begin{definition}[Recession Cone, Extreme Ray, \& Extreme Direction]  \label{def:ext}
		For  a closed line-free convex set $D$, the recession cone of $D$ is a closed convex cone containing all the directions in set $D$, denoted by $\reccone(D)$. 
		The extreme ray of the recession cone $\reccone(D)$ is a half-line face emanating from the origin. 
		In addition, an extreme direction in set $D$ is the direction of an extreme ray of its recession cone $\reccone(D)$. 
	\end{definition}
	
	Using the representation theorem 18.5 in \cite{rockafellar2015convex}, set $\C_{\rel}$ is equal to the closed convex hull of all the extreme points and extreme directions of $\C_{\rel}$. 
	Therefore, we next explore the properties of  extreme rays of  recession cone of set $\C_{\rel}$. Similar to \autoref{lem:genext}, it is intuitive to  study where the extreme rays  of  $\reccone(\C_{\rel})$ are located at the recession cone of $\clconv(\X)$, which is illustrated in the example below.
	
	
	
	\begin{example}\label{eg:ray} \rm
		Suppose  domain set $\X:= \{\bm X\in \S_+^2: \rank(\bm X) \le 1, X_{12}=0\}$ and there is $m=1$ LMI $X_{11} \le X_{22}$. Then we have $\clconv(\X) =\conv(\X)=\{\bm X\in \S_+^2:  X_{12}=0\}$ and two feasible sets defined in \eqref{eq_2sets} become
		\[\C :=  \{\bm  X \in \X: X_{11} \le X_{22}\}, \ \ 
		\C_{\rel} :=  \{\bm  X \in \clconv(\X): X_{11} \le X_{22}\}.\]
		Following \autoref{eg1}, the domain set $\X$ here  can be recast into the two-dimensional vector space, i.e., $\X:=\{(X_{11}, X_{22}) \in \Re^2_+: X_{11}X_{22}=0 \}$ and the LMI is reduced to $X_{11}\le X_{22}$.
		Therefore, we have $\clconv(\X)=\conv(\X):=
		\Re^2_+
		$. As shown in Figure \ref{egrayx} and Figure \ref{egrayconv}, sets $\X$ and $\conv(\X)$ in this example are unbounded.
		The corresponding feasible  set $\C$ is the red vertical line in Figure \ref{egrayc}, and the red shadow area in Figure \ref{egrayc1} corresponds to set $\C_{\rel}$. 
		
		It is seen that (i) any zero-- or one-dimensional face of $\conv(\X)$ belongs to $\X$, i.e., $F \subseteq \X$ for all $F\in \F^{1}(\conv(\X))$; (ii)  for the conic line-free set $\C_{\rel}$ in Figure \ref{egrayc1}, the recession cone of set $\C_{\rel}$  is itself, and its extreme rays belong to no larger than two-dimensional faces of $\clconv(\X)$. Particularly, the extreme ray $\{(X_{11}, X_{22}) \in \Re^2_+: X_{11}=X_{22}\}$ of set $\C_{\rel}$ is contained in a two-dimensional face of set $\clconv(\X)$; and
		(iii) in this example, set $\C_{\rel}$ achieves the extreme point exactness, whereas the convex hull exactness fails. \qedA
	\end{example}
	\begin{figure}[ht]
		\centering
		\vskip -0.05in
		\subfigure[$\X$]{
			{\includegraphics[width=0.14\columnwidth]{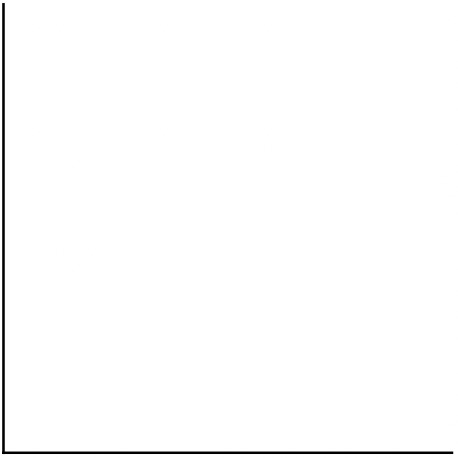}}	\label{egrayx}}
		~~~~~~~~~
		\subfigure[$\clconv(\X)$]{
			{\includegraphics[width=0.14\columnwidth]{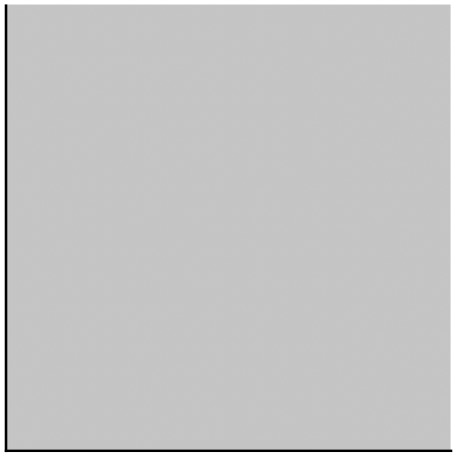}}\label{egrayconv}}
		~~~~~~~~~
		\subfigure[$\C$]{
			{\includegraphics[width=0.205\columnwidth]{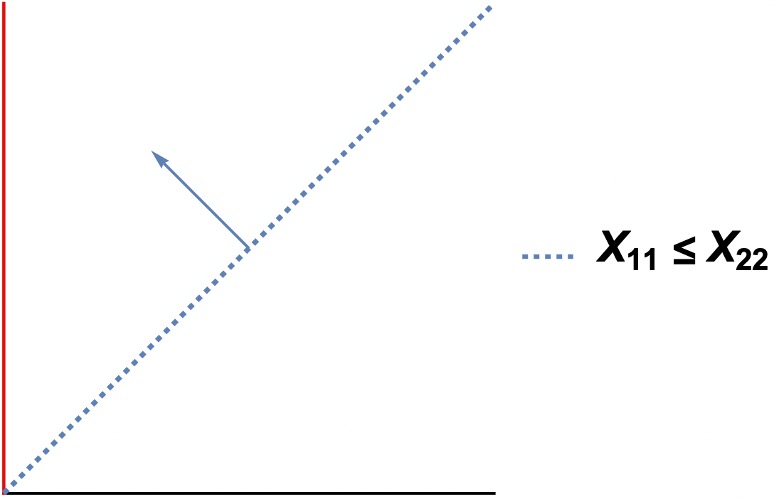}}	\label{egrayc}}
		~~~~~~~~~
		\subfigure[$\C_{\rel}$]{
			{\includegraphics[width=0.205\columnwidth]{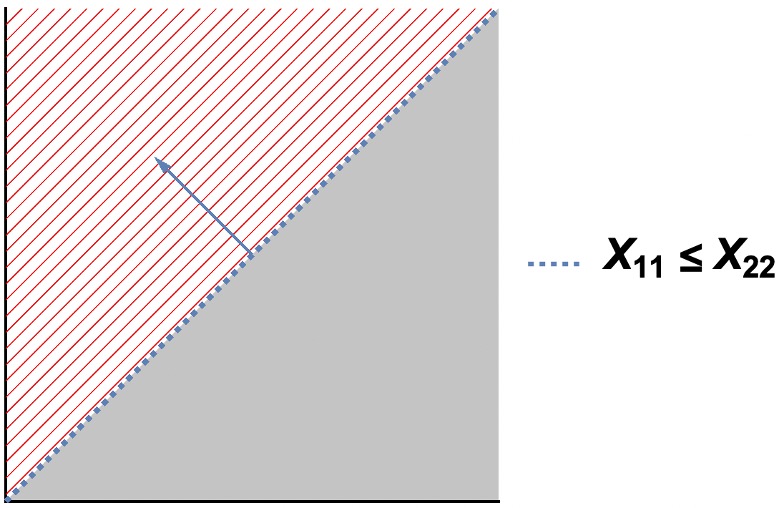}}	\label{egrayc1}}
		\caption{Illustration of Sets in \autoref{eg:ray} and $\ext(\C_{\rel}) \subseteq \C$, $\C_{\rel} = \clconv(\C')$ }
		\label{fig:egray}
		\vskip -0.1in
	\end{figure}

	%

	\autoref{eg:ray} motivates us the following technical result.
	\begin{restatable}{lemma}{lemext} \label{lem:genray}
		Given a  closed line-free convex set $D \subseteq \Re^n$, for any $m$ linear constraints $\{\bm x \in \Re^{n}: b_i^l  \le \langle \bm a_i, \bm x \rangle \le b_i^u, \forall i\in [m]\}$, suppose that  the intersection set $\H:=D \cap \{\bm x \in \Re^{n}: b_i^l  \le \langle \bm a_i, \bm x \rangle \le b_i^u, \forall i\in [m]\}$ is unbounded.
		Then each extreme ray  of the recession cone of set $\H$ is contained in a face of the recession cone of  set $D$ with dimension no larger than $\tilde{m}+1$, where $\tilde{m}\le m$ denotes the number of linearly independent vectors $\{\bm a_i\}_{i\in [m]}$.
	\end{restatable}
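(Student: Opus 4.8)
The plan is to reduce this recession-cone statement to the already-proved \autoref{lem:genext} by passing to a compact base of the cone. First I would record that, since $\H$ is unbounded and hence nonempty, the recession cone of the intersection splits as $\reccone(\H)=\reccone(D)\cap\reccone(P)$, where $P:=\{\bm x\in\Re^n: b_i^l\le\langle\bm a_i,\bm x\rangle\le b_i^u,\ \forall i\in[m]\}$ is the defining polyhedron (see \cite{rockafellar2015convex}). The key observation is that $\reccone(P)$ is itself a polyhedral cone cut out by the \emph{same} vectors $\{\bm a_i\}_{i\in[m]}$: a direction $\bm d$ recedes $P$ exactly when $\langle\bm a_i,\bm d\rangle\ge 0$ whenever $b_i^l$ is finite, $\langle\bm a_i,\bm d\rangle\le 0$ whenever $b_i^u$ is finite, and $\langle\bm a_i,\bm d\rangle=0$ when both are finite. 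Hence $\reccone(P)$ is described by linear constraints whose coefficient vectors contain at most $\tilde m$ linearly independent members.

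Next I would build a compact base of $K:=\reccone(D)$. Because $D$ is line-free, $K$ is a pointed closed convex cone, so its dual $K^\ast$ has nonempty interior; choosing $\bm c\in\Int(K^\ast)$ makes $B:=K\cap\{\bm x:\langle\bm c,\bm x\rangle=1\}$ a \emph{compact} base of $K$. The standard cone--base correspondence then gives a bijection between the proper faces of $K$ and the faces of $B$ via $G\mapsto\cone(G)$, under which $\dim(\cone(G))=\dim(G)+1$, together with a bijection between the extreme rays of any subcone of $K$ and the extreme points of its slice by $\{\langle\bm c,\cdot\rangle=1\}$. In particular, slicing $\reccone(\H)=K\cap\reccone(P)$ yields $B_\H:=B\cap\reccone(P)$, and each extreme ray of $\reccone(\H)$ corresponds to an extreme point of $B_\H$.

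With this setup the conclusion is immediate. The set $B_\H$ is precisely the compact convex set $B$ intersected with the linear constraints that define $\reccone(P)$, whose coefficient vectors span a space of dimension at most $\tilde m$. Applying \autoref{lem:genext} with the roles $D\mapsto B$ therefore places every extreme point of $B_\H$ in a face of $B$ of dimension at most $\tilde m$; taking the cone over that face produces a face of $K=\reccone(D)$ of dimension at most $\tilde m+1$ containing the corresponding extreme ray of $\reccone(\H)$, which is exactly the desired claim. Recall from \autoref{def:ext} that line-freeness of $D$ is what guarantees $\reccone(\H)\subseteq K$ is pointed, so that its extreme rays are well defined.

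The part I expect to require the most care is the base construction and the cone--base dictionary: I must verify that $\bm c\in\Int(K^\ast)$ genuinely yields a \emph{bounded} base (so that the compact-set hypothesis underlying the extreme-point/extreme-ray correspondence holds), that $G\mapsto\cone(G)$ carries faces to faces while raising dimension by exactly one, and that restricting the functionals $\langle\bm a_i,\cdot\rangle$ to the affine hyperplane $\{\langle\bm c,\cdot\rangle=1\}$ does not push the number of linearly independent coefficient vectors above $\tilde m$. By contrast, the recession-cone identity and the polyhedral description of $\reccone(P)$ are routine once $\H\neq\emptyset$ is noted.
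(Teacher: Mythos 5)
Your proposal is correct, but it takes a genuinely different route from the paper. The paper proves \autoref{lem:genray} directly, by rerunning the induction-and-perturbation argument of \autoref{lem:genext} at the level of $\reccone(D)$: assuming an extreme ray sits in no face of dimension $\le\tilde m+1$, it picks two distinct points $\bm x_1,\bm x_2$ of the ray in $\ri(\reccone(D))$, finds a nonzero $\bm y$ orthogonal to all $\bm a_i$ \emph{and} to $\bm x_1-\bm x_2$ (this extra orthogonality condition is where the ``$+1$'' enters), and derives a contradiction with the ray being a one-dimensional face of $\reccone(\H)$. You instead homogenize: you use $\reccone(\H)=\reccone(D)\cap\reccone(P)$ with $\reccone(P)$ cut out by the same vectors $\{\bm a_i\}_{i\in[m]}$, slice the pointed cone $K=\reccone(D)$ by a compact base $B$ (valid since line-freeness of $D$ makes $K$ pointed, hence $\Int(K^*)\neq\emptyset$), apply \autoref{lem:genext} to $B$ intersected with those homogeneous constraints, and recover the ``$+1$'' from the dimension shift $\dim(\cone(G))=\dim(G)+1$ in the face correspondence $G\mapsto\cone(G)$ between faces of $B$ and nonzero faces of $K$. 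Both arguments are sound; yours buys genuine reuse of \autoref{lem:genext} (no second induction) at the cost of importing the standard but nontrivial cone--base dictionary, which you correctly flag as the step needing verification, while the paper's version is self-contained and structurally parallel to its earlier proof. One small remark: your worry about restricting the functionals $\langle\bm a_i,\cdot\rangle$ to the base hyperplane is unnecessary, since \autoref{lem:genext} can be applied to $B$ as a closed convex subset of $\Re^n$ with the constraints read in the ambient space, where their coefficient vectors still span a space of dimension $\tilde m$.
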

	\begin{proof}
		We will show that any extreme ray in the recession cone of  $\H$ is contained in $\F^{\tilde{m}+1}(\reccone(D))$.
		Similar to \autoref{lem:genext},  we  use induction to prove it. Let $d$ denote the dimension of the recession cone of set $D$. First, the result trivially holds if $d \le \tilde{m}+1$.
		
		Suppose that the result holds for any $d\in [\bar{d}-1]$ with $\bar{d}\ge \tilde{m}+2$.
		Then let us prove the case of $d:=\bar{d}$ by contradiction. 
	If there is an extreme ray $F$ in $\reccone(\H)$ that is not contained in  $\F^{\tilde{m}+1}(\reccone(D))$, then we let $\hat{d}$ denote the smallest dimension among all the faces in $\reccone(D)$ that contain $F$. Thus, we must have $ \tilde{m}+1 < \hat{d}  \le \bar{d}$. We split the remaining proof into two parts depending on whether $\hat{d}< \bar{d}$ or $\hat{d}= \bar{d}$. 
	\begin{enumerate}[(i)]
		\item If $\hat{d}< \bar{d}$, then following the induction and the similar analysis in \autoref{lem:genext}, the result holds.
		
		\item If $\hat{d}= \bar{d}$, then by the definition of $\hat{d}$, the face $F$ must intersect the relative interior of the recession cone of set $D$, i.e., $F\cap \ri(\reccone(D)) \neq \emptyset$.
		Suppose that $\bm x_1, \bm x_2$ are two distinct points in $F\cap \ri(\reccone(D))$ (since $F$ is a closed convex set and $\ri(\reccone(D))$ is an open convex set), i.e., $\bm x_1, \bm x_2 \in F\cap \ri(\reccone(D)),\bm x_1\neq \bm x_2 $.
		Given $n\ge \bar{d}  > \tilde{m}+1$, for any $m$ vectors $\{\bm a_i\}_{i\in [m]}$ of dimension $\tilde m$, there exists a nonzero vector $\bm y  \in \Re^n$ such that 
		\[\langle \bm a_i, \bm y\rangle = 0, \forall i\in [m],  \ \  \langle \bm x_1-\bm x_2, \bm y\rangle = 0,\]
		where the last equality implies that $\bm y$ is orthogonal to the extreme ray  $F$, i.e., $\bm y \perp F$. Since the point $\bm x_1 \in F\cap \ri(\reccone(D))$, there exists a small scalar $\epsilon >0$ such that $\bm x_1\pm \epsilon \bm y\in \reccone(D)$. 
		

		Besides, it is known (see, e.g., \cite{luc1990recession}) that the recession cone of the intersection set  $\H$ is equal to intersecting recession cones of set $D$ and the linear system $\{\bm x \in \Re^{n}: b_i^l  \le \langle \bm a_i, \bm x \rangle \le b_i^u, \forall i\in [m]\}$.
		We have $ \langle \bm a_i , \bm x_1 \pm \epsilon \bm y \rangle = \langle \bm a_i,  \bm x_1 \rangle$ for all $i\in [m]$ and $\bm x_1 \in \F$,  implying that points $\bm x_1 \pm \epsilon \bm y$ also belong to the recession cone of the linear system $\{\bm x \in \Re^{n}: b_i^l  \le \langle \bm a_i, \bm x \rangle \le b_i^u, \forall i\in [m]\}$.
		It follows that $\bm x_1 \pm \epsilon \bm y \in \reccone(\H)$.
		According to \autoref{def:ext}, any extreme ray in the recession cone $\reccone(\H)$ is exactly a  half-line face in this cone, i.e., $F$ is a one-dimensional face. Thus,
		$\bm x_1 = \frac{1}{2} (\bm x_1 + \epsilon \bm y)+\frac{1}{2} (\bm x_1 - \epsilon \bm y)$, implying that $\bm x_1\pm \epsilon \bm y \in F$ according to \autoref{def:face} of a face. This contradicts the fact $\bm y \perp F$.
	\end{enumerate}
	
	This completes the proof. \qed
\end{proof}

Similar to \autoref{lem:genext}, the result in \autoref{lem:genray} can be generalized to any matrix-based set. 
\begin{corollary} \label{cor_lem:genray}
	Given a closed line-free convex set $D$ of matrix space $\Q$,  for any $m$ LMIs $\{\bm X \in \Q: b_i^l  \le \langle \bm A_i, \bm X \rangle \le b_i^u, \forall i\in [m]\}$, suppose that  the intersection set $\H:=D \cap \{\bm X \in \Q: b_i^l  \le \langle \bm A_i, \bm X \rangle \le b_i^u, \forall i\in [m]\}$ is unbounded.
	Then each extreme ray of the recession cone of set $\H$  is contained in a face  of  the recession cone of set $D$ with dimension no larger than $\tilde{m}+1$,  where $\tilde{m}\le m$ denotes the dimension of matrices $\{\bm A_i\}_{i\in [m]}$.
\end{corollary}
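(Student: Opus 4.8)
The plan is to reduce the statement to \autoref{lem:genray} by vectorizing the matrix space, exactly as is done for \autoref{cor_lem:genext}. Since $\Q$ sits inside a finite-dimensional real vector space equipped with the trace inner product $\langle\cdot,\cdot\rangle$, I would first fix an orthonormal basis of that ambient space and let $\phi$ denote the associated coordinate map into $\Re^N$, where $N$ is the dimension of the ambient space (e.g.\ $N=np$ for $\Q=\Re^{n\times p}$, and $N=n(n+1)/2$ for $\Q=\S^n$ with the off-diagonal coordinates scaled by $\sqrt{2}$). The point of this choice is that $\phi$ is a linear isomorphism that preserves inner products, so that $\langle\bm A_i,\bm X\rangle=\langle\phi(\bm A_i),\phi(\bm X)\rangle$ for every $i\in[m]$; writing $\bm a_i:=\phi(\bm A_i)$ and $\bm x:=\phi(\bm X)$, each two-sided LMI $b_i^l\le\langle\bm A_i,\bm X\rangle\le b_i^u$ becomes exactly the linear constraint $b_i^l\le\langle\bm a_i,\bm x\rangle\le b_i^u$ in $\Re^N$.

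Next I would transport all of the hypotheses and the conclusion across $\phi$. The image $\phi(D)$ is a closed convex subset of $\Re^N$, and it is line-free because a line in $\phi(D)$ would pull back through the linear bijection $\phi^{-1}$ to a line in $D$; likewise $\phi(\H)=\phi(D)\cap\{\bm x:b_i^l\le\langle\bm a_i,\bm x\rangle\le b_i^u,\ \forall i\in[m]\}$ is unbounded, since $\phi$, being linear and invertible, carries bounded sets to bounded sets and conversely. Because $\phi$ is a linear isomorphism it commutes with all the convex-geometric operations in play: it maps $\reccone(D)$ onto $\reccone(\phi(D))$ and $\reccone(\H)$ onto $\reccone(\phi(\H))$, it sends faces to faces preserving their dimension, and it sends extreme rays to extreme rays. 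Finally, the number $\tilde m$ of linearly independent matrices among $\{\bm A_i\}_{i\in[m]}$ equals the number of linearly independent vectors among $\{\bm a_i\}_{i\in[m]}$, again because $\phi$ is a linear bijection.

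With these identifications in place, I would apply \autoref{lem:genray} to the closed line-free convex set $\phi(D)\subseteq\Re^N$ together with the linear constraints determined by $\{\bm a_i\}_{i\in[m]}$: every extreme ray of $\reccone(\phi(\H))$ lies in a face of $\reccone(\phi(D))$ of dimension at most $\tilde m+1$. Pulling this back through $\phi^{-1}$, which preserves recession cones, faces, dimensions, and extreme rays by the same reasoning, yields that every extreme ray of $\reccone(\H)$ is contained in a face of $\reccone(D)$ of dimension at most $\tilde m+1$, which is the claim. The only point that requires any care, and hence the main (albeit routine) obstacle, is verifying that the chosen coordinate map genuinely preserves the trace inner product so that the LMIs become honest linear inequalities carrying the same $\tilde m$; for $\S^n$ this forces the $\sqrt{2}$ scaling on the off-diagonal entries, after which everything else is a transparent transfer along a linear isomorphism.
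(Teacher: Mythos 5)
Your proposal is correct and matches the paper's approach: the paper justifies this corollary (and the analogous \autoref{cor_lem:genext}) by simply "reshaping a matrix into a long vector" and invoking the vector-space lemma, which is precisely the transfer along a linear isomorphism that you carry out in detail. Your additional care about making the coordinate map inner-product preserving is fine but not strictly needed, since any linear bijection already preserves faces, dimensions, recession cones, extreme rays, and the count $\tilde m$ of linearly independent constraint coefficients.
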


When intersecting the set $\clconv(\X)$ with $\tilde m$-dimensional LMIs, \autoref{lem:genext} together with \autoref{def:ext} indicates that some special faces in the recession cone of set  $\clconv(\X)$ play an important role in determining the extreme rays of the intersection set $\C_{\rel}$. Next, we show a sufficient condition under which the convex hull exactness holds.
%
\begin{restatable}{theorem}{themext} \label{them:ubdconv}
	Given a nonempty closed  domain set $\X$ with its closed convex hull $\clconv(\X)$ being line-free, the following statement (a) implies statement  (b):
	\begin{enumerate}[(a)]
		\item \textbf{Inclusive Face:}  The (Minkowski) sum of any no larger than $\tilde{m}$-dimensional face of set $\clconv(\X)$ and any no larger than $(\tilde{m}+1)$-dimensional face of recession cone $\reccone(\clconv(\X))$  is contained in the domain set $\X$, i.e.,  $F+ \hat{F} \subseteq \X$ for all $F\in \F^{\tilde{m}}(\clconv(\X))$ and $\hat{F}\in \F^{\tilde{m}+1}(\reccone(\clconv(\X)))$;
		\item \textbf{Convex Hull Exactness:} The feasible set $\C_{\rel}$ is equal to the closed convex hull of set $\C$ for any $m$ LMIs of dimension $\tilde{m}$  in RCOP \eqref{eq_rank}, i.e., $\C_{\rel}=\clconv(\C)$.
	\end{enumerate}
\end{restatable}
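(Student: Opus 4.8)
The plan is to prove the nontrivial inclusion $\C_{\rel} \subseteq \clconv(\C)$, since $\clconv(\C) \subseteq \C_{\rel}$ holds automatically. Because $\clconv(\X)$ is line-free, so is the subset $\C_{\rel} \subseteq \clconv(\X)$, and the representation theorem (Theorem 18.5 in \cite{rockafellar2015convex}) lets me write $\C_{\rel}$ as the closed convex hull of its extreme points together with its extreme directions; equivalently $\C_{\rel} = \cl\big(\conv(\ext(\C_{\rel})) + \reccone(\C_{\rel})\big)$, where $\reccone(\C_{\rel})$ is generated by its extreme rays. Since $\clconv(\C)$ is closed and convex, it then suffices to establish two facts: (A) every extreme point of $\C_{\rel}$ lies in $\clconv(\C)$, and (B) every extreme direction of $\C_{\rel}$ lies in $\reccone(\clconv(\C))$.

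For (A) I would specialize condition (a) by taking $\hat{F}=\{0\}$, the zero-dimensional apex face of the pointed cone $\reccone(\clconv(\X))$; then $F+\{0\}=F\subseteq\X$ recovers precisely the inclusive-face hypothesis of \autoref{them:ext}, so extreme point exactness $\ext(\C_{\rel})\subseteq\C\subseteq\clconv(\C)$ follows at once. Part (B) is the crux. Assume $\C_{\rel}$ is nonempty and unbounded (otherwise $\reccone(\C_{\rel})=\{0\}$ and (B) is vacuous); being line-free, it has an extreme point. Fix an extreme direction $\bm d$ of $\C_{\rel}$ and an arbitrary extreme point $\hat{\bm X}$ of $\C_{\rel}$. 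By \autoref{cor_lem:genext}, $\hat{\bm X}$ lies in a face $F\in\F^{\tilde{m}}(\clconv(\X))$, and by \autoref{cor_lem:genray}, the extreme ray $\{t\bm d : t\ge 0\}$ lies in a face $\hat{F}\in\F^{\tilde{m}+1}(\reccone(\clconv(\X)))$. Applying condition (a) to this pair gives $F+\hat{F}\subseteq\X$, hence $\hat{\bm X}+t\bm d\in\X$ for every $t\ge 0$. On the other hand, since $\hat{\bm X}\in\C_{\rel}$ and $\bm d\in\reccone(\C_{\rel})$, the entire ray $\hat{\bm X}+t\bm d$ remains in $\C_{\rel}$ and thus satisfies all $m$ LMIs. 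Combining domain membership with LMI feasibility yields $\hat{\bm X}+t\bm d\in\C$ for all $t\ge 0$, so this ray lies in $\clconv(\C)$ and therefore $\bm d\in\reccone(\clconv(\C))$.

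With (A) and (B) in hand, $\conv(\ext(\C_{\rel}))\subseteq\clconv(\C)$ and $\reccone(\C_{\rel})\subseteq\reccone(\clconv(\C))$, so $\conv(\ext(\C_{\rel}))+\reccone(\C_{\rel})\subseteq\clconv(\C)$; taking closures and using that $\clconv(\C)$ is closed gives $\C_{\rel}\subseteq\clconv(\C)$, which completes the argument. The main obstacle I anticipate is exactly Part (B): this is where the two-piece Minkowski-sum form of condition (a) is indispensable, because an extreme ray of $\C_{\rel}$ need not sit inside any low-dimensional face of $\clconv(\X)$ itself (as \autoref{eg2} illustrates), so one must simultaneously control the face $F$ hosting the base point and the face $\hat{F}$ hosting the recession direction, and then verify that sliding the base point along the ray preserves both domain membership (via $F+\hat{F}\subseteq\X$) and LMI feasibility (via $\bm d\in\reccone(\C_{\rel})$). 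A secondary care point is invoking the representation theorem correctly in the unbounded line-free setting and confirming that $\{0\}$ is a genuine face of the recession cone, so that (A) reduces cleanly to \autoref{them:ext}.
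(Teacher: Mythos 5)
Your proposal is correct and follows essentially the same route as the paper's proof: extreme point exactness via \autoref{them:ext} (obtained by specializing $\hat{F}=\{0\}$), placement of extreme rays in faces of $\reccone(\clconv(\X))$ via \autoref{cor_lem:genray}, the Minkowski-sum condition to keep the shifted ray inside $\X$, recession-cone intersection to preserve LMI feasibility, and the representation theorem to conclude. If anything, your write-up is slightly more careful than the paper's in explicitly locating the base point $\hat{\bm X}$ in a face $F\in\F^{\tilde m}(\clconv(\X))$ before invoking $F+\hat{F}\subseteq\X$.
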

\begin{proof}
	According to Part (a) and \autoref{them:ext},  we have that each extreme point of set $\C_{\rel}$ belongs to  set $\C$. 
	
	Given a line-free set $\clconv(\X)$, the intersection set $\C_{\rel}$ is closed, convex, and line-free. For any extreme direction $\bm D$ in the line-free set $\C_{\rel}$, according to \autoref{def:ext}, there is an extreme ray $\hat{S}$ in the recession cone of set $\C_{\rel}$, i.e.,   $\hat{S}:=\{\alpha \bm D: \alpha \ge 0\}\subseteq \reccone(\C_{\rel})$.
	We show in  \autoref{lem:genray} and its corollary that the extreme ray   $\hat{S}$ must  belong to a face $\hat{F}$ in
	$\F^{\tilde{m}+1}(\reccone(\clconv(\X)))$, i.e., $\hat{S}\subseteq \hat{F}$. Given the presumption on domain set $\X$ in Part (a), 
	there is an extreme point $\hat{\bm X}$ in set $\C$ such that  $\hat{\bm X}+ \hat{S} :=\{ \hat{\bm X}+\alpha \bm D: \alpha \ge 0\} \subseteq \hat{\bm X}+ \hat{F}  \subseteq \X$ holds.
	
	Besides, it is known (see, e.g., \cite{luc1990recession}) that the recession cone of the intersection of closed convex sets is equal to intersecting recession cones.
	Since the extreme direction $\bm D$ also lies in the recession cone of the $m$ LMIs, we conclude that $\{ \hat{\bm X}+\alpha \bm D: \alpha \ge 0\}$ belongs to set $\C$. Therefore, $\bm D$ is also an extreme direction in set $\clconv(\C)$ as $\clconv(\C) \subseteq \C_{\rel}$ always holds.
	Using the representation theorem 18.5 in \cite{rockafellar2015convex}, the set $\C_{\rel}$ is equal to the closed convex hull of all extreme points and extreme directions of $\C_{\rel}$. Therefore, we have that $\C_{\rel}=\clconv(\C)$.\qed
\end{proof}

The following example shows that the sufficient condition  in \autoref{them:ubdconv}, unfortunately, is not necessary for the convex hull exactness.
\begin{example}\label{eq:counter} \rm
	Suppose that the domain set $\X:=\{ \bm x\in \R_{+}^2: \rank(\bm x)\le 1, (x_{1}-1)^2+ (x_{2}-1)^2 \ge 0.5^2\}$. Then the domain set is equivalent to 
	$\X:= \Re_+^2 \setminus \{\bm x \in \Re_+^2:  (x_1-1)^2+(x_2-1)^2 <0.5^2 \}$. That is, the domain set $\X$ is defined as removing an open ball from the interior of two-dimensional nonnegative orthant. Hence, $\clconv(\X):= \Re_+^2$.  
	Since set $\clconv(\X)$ itself is a  two-dimensional face and is equal to the recession cone, 
	our sufficient condition in \autoref{them:ubdconv} becomes that $F \subseteq \X$ for all $F\in \F^{\tilde{m}+1}(\clconv(\X))$.  
	We see that the domain set $\X$ does not contain the two-dimensional face in $\clconv(\X)$. 
	That is, the condition  fails even when $\tilde{m}=1$. However, the convex hull exactness always holds when intersecting sets $\X$ and $\clconv(\X)$ with any $m=\tilde m=1$ LMI, respectively. This shows that the sufficient condition  in \autoref{them:ubdconv} may not be necessary.
	\qedA
\end{example}

\begin{figure}[htbp]
	\centering
	\vskip -0.2in
	\subfigure[$\X$ ]{
		{\includegraphics[width=0.17\columnwidth]{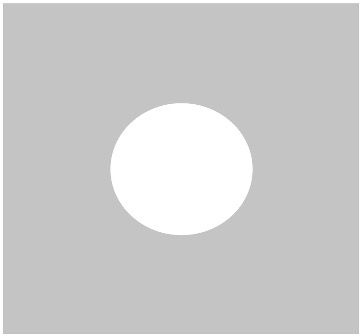}}	\label{eg4c}}
	\hspace{3cm}
	\subfigure[$\clconv(\X)$ ]{
		{\includegraphics[width=0.17\columnwidth]{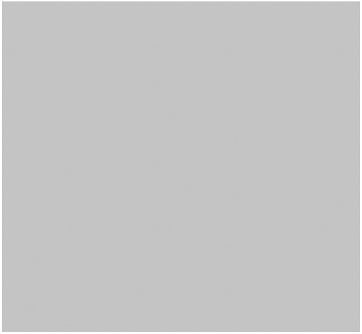}}	\label{eg4conv}}
	\caption{Illustration of Sets in \autoref{eq:counter}  and $\C_{\rel} = \clconv(\C)$ for any $\tilde m=1$-dimensional linear constraints.  }
	\label{fig:eg4}
	\vskip -0.1in
\end{figure}

Interestingly, we show that the sufficient condition in \autoref{them:ubdconv} becomes necessary when the domain set $\X$ is closed and conic. For example, the domain set $\X$ of QCQP only has a rank-1 constraint and is thus conic. 
In this case, we have	$\reccone(\clconv(\X)) = \clconv(\X)$,
which simplifies the sufficient condition in \autoref{them:ubdconv}  to be  that $F \subseteq \X$ for all $F\in \F^{\tilde{m}+1}(\clconv(\X))$.
\begin{theorem}\label{them:qcqpch}
	Given a nonempty closed conic domain set $\X$, i.e., for any $\alpha\geq 0$ and $\bm X\in \X$, we have $\alpha \bm X \in \X$, and its closed convex hull $\clconv(\X)$ being line-free, then the followings are equivalent:
	\begin{enumerate}[(a)]
		\item \textbf{Inclusive Face:} Any no larger than $(\tilde{m}+1)$-dimensional face of set $\clconv(\X)$ is contained in the domain set $\X$, i.e.,  $F \subseteq \X$ for all $F\in \F^{\tilde{m}+1}(\clconv(\X))$;
		\item \textbf{Convex Hull Exactness:} The feasible set $\C_{\rel}$ is equal to the closed convex hull of set $\C$ for any $m$ LMIs of dimension $\tilde{m}$ in RCOP \eqref{eq_rank}, i.e., $\C_{\rel}=\clconv(\C)$.
	\end{enumerate}
\end{theorem}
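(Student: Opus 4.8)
The plan is to prove the equivalence by establishing the two implications, exploiting the single structural simplification that conicity buys: for a closed cone the recession cone of its convex hull is the hull itself, $\reccone(\clconv(\X)) = \clconv(\X)$. Consequently $\F^{\tilde{m}+1}(\reccone(\clconv(\X))) = \F^{\tilde{m}+1}(\clconv(\X))$, so the face families appearing in \autoref{them:ubdconv} and in statement (a) here coincide. Note, however, that one cannot simply invoke \autoref{them:ubdconv}: its Minkowski-sum hypothesis $F+\hat F\subseteq\X$ is strictly stronger than (a) in the conic case (for $\clconv(\X)=\Re_+^n$ a sum of a low-dimensional face and a second face can already fill the whole cone), so a direct argument is required. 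What (a) does give for free is the $\tilde m$-dimensional inclusive-face condition, since $\tilde m \le \tilde m+1$; hence \autoref{them:ext} immediately yields extreme point exactness $\ext(\C_{\rel})\subseteq\C$, and the entire remaining content concerns the recession cone of $\C_{\rel}$.

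For (a) $\Rightarrow$ (b): by the representation theorem (Theorem 18.5 in \cite{rockafellar2015convex}), $\C_{\rel}$ equals the closed convex hull of its extreme points and extreme directions, so it suffices to show $\ext(\C_{\rel})\subseteq\C$ (done) together with $\reccone(\C_{\rel})\subseteq\reccone(\clconv(\C))$; these force $\C_{\rel}\subseteq\clconv(\C)$, and the reverse inclusion is automatic. Fix an extreme direction $\bm D$ of $\C_{\rel}$; by \autoref{cor_lem:genray} its ray lies in a face $\hat F\in\F^{\tilde m+1}(\reccone(\clconv(\X)))=\F^{\tilde m+1}(\clconv(\X))$, so $\hat F\subseteq\X$ by (a). I would then certify $\bm D\in\reccone(\clconv(\C))$ using the conic \emph{minimal-face identity}: in the pointed cone $\clconv(\X)$, for any $\alpha>0$ the smallest face containing $\hat{\bm X}+\alpha\bm D$ equals the smallest face $G$ containing both $\hat{\bm X}$ and $\bm D$, independently of $\alpha$. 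Choosing an extreme point $\hat{\bm X}\in\ext(\C_{\rel})\subseteq\C$ that lies inside $\hat F$ forces $G\subseteq\hat F\subseteq\X$, so the straight ray $\{\hat{\bm X}+\alpha\bm D:\alpha\ge 0\}$ stays in $\X$; since $\bm D$ recedes the LMI system and $\hat{\bm X}$ is feasible, this ray lies in $\C$, giving $\bm D\in\reccone(\C)\subseteq\reccone(\clconv(\C))$.

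The main obstacle is precisely the case where $\hat F$ contains no extreme point of $\C_{\rel}$, so that no straight ray of $\C$ emanates along $\bm D$; this already occurs for the Lorentz cone, where $\C$ can be a strictly convex curve whose recession in direction $\bm D$ is only asymptotic. There I would replace the straight-ray argument by an asymptotic one: produce extreme points $\hat{\bm X}_n\in\ext(\C_{\rel})\subseteq\C$ with $\|\hat{\bm X}_n\|\to\infty$ and $\hat{\bm X}_n/\|\hat{\bm X}_n\|\to\bm D/\|\bm D\|$, so that $\bm D$ lies in the asymptotic cone of $\C$, hence in $\reccone(\clconv(\C))$. Showing that such escaping feasible extreme points actually exist—rather than the extreme direction being attached only to a bounded portion of $\C_{\rel}$—is the delicate step, and it is exactly here that the $+1$ in $\tilde m+1$ together with conicity must be used to keep the governing faces inside $\F^{\tilde m+1}(\clconv(\X))\subseteq\X$.

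For (b) $\Rightarrow$ (a): I argue by contraposition. Suppose some $F\in\F^{\tilde m+1}(\clconv(\X))$ has $F\not\subseteq\X$, and pick $\hat{\bm X}\in F\setminus\X$; since $\X$ is a cone and $\hat{\bm X}\notin\X$, the whole punctured ray $\{t\hat{\bm X}:t>0\}$ avoids $\X$, hence avoids $\C$. As $F$ is a face of $\clconv(\X)$ of dimension $d\le\tilde m+1$, I choose $\tilde m$ linearly independent matrices $\{\bm A_i\}$ whose homogeneous equalities $\langle\bm A_i,\bm X\rangle=0$ cut $\spa(F)$ down to the line $\spa(\hat{\bm X})$ (only $d-1\le\tilde m$ genuine cuts are needed, the rest padded orthogonally to $\spa(F)$). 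Then $F$ intersected with these equalities is exactly the ray $\{t\hat{\bm X}\}$, and because $F$ is a face of $\clconv(\X)$ this ray is an extreme ray of $\C_{\rel}$, i.e., $\hat{\bm X}$ is an extreme direction of $\C_{\rel}$. If convex hull exactness held, $\hat{\bm X}$ would be an extreme direction of $\clconv(\C)$, hence would lie in the asymptotic cone of $\C$: there would exist $\bm x_n\in\C\subseteq\X$ with $\bm x_n/\|\bm x_n\|\to\hat{\bm X}/\|\hat{\bm X}\|$. Since $\bm x_n/\|\bm x_n\|\in\X$ by conicity and $\X$ is closed, the limit $\hat{\bm X}/\|\hat{\bm X}\|\in\X$, whence $\hat{\bm X}\in\X$—contradicting $\hat{\bm X}\notin\X$. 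Thus exactness fails for this LMI choice, which proves the contrapositive and completes the equivalence.
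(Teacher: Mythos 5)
Your necessity argument is essentially the paper's: isolate the ray $\{t\hat{\bm X}:t\ge 0\}\subseteq F\setminus\X$ by a homogeneous $\tilde m$-dimensional LMI system, observe that it is an extreme ray of the resulting $\C_{\rel}$ because $F$ is a face of $\clconv(\X)$, and contradict exactness; your closing step (normalized points of $\C$ converging to $\hat{\bm X}/\|\hat{\bm X}\|$, combined with conicity and closedness of $\X$) is if anything a more careful justification than the paper's one-line assertion that $\bm D$ cannot be an extreme direction of $\clconv(\hat\C)$ because it is not in $\X$. You are also right that condition (a) does not by itself deliver the Minkowski-sum hypothesis of \autoref{them:ubdconv}: for $\clconv(\X)=\Re_+^3$ the sum of the $x_1$-axis with the face $\{x_1=0\}$ is the whole cone, so the identity $\F^{\tilde m}(\clconv(\X))+\F^{\tilde m+1}(\clconv(\X))=\F^{\tilde m+1}(\clconv(\X))$ by which the paper reduces sufficiency to \autoref{them:ubdconv} cannot be read as a statement about Minkowski sums of arbitrary face pairs. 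This is a fair diagnosis of a soft spot in the paper's own argument.

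The problem is that, having rejected that reduction, you must supply a complete substitute for the implication (a) $\Rightarrow$ (b), and you do not. The case in which the face $\hat F\in\F^{\tilde m+1}(\clconv(\X))$ carrying the extreme ray contains no extreme point of $\C_{\rel}$ --- so that no straight ray of $\C$ emanates in direction $\bm D$ --- is exactly where you stop, declaring that producing feasible extreme points $\hat{\bm X}_n\in\C$ with $\|\hat{\bm X}_n\|\to\infty$ and $\hat{\bm X}_n/\|\hat{\bm X}_n\|\to\bm D/\|\bm D\|$ ``is the delicate step.'' That step is the entire content of sufficiency beyond what \autoref{them:ext} and \autoref{cor_lem:genray} already give: without it you have only established $\ext(\C_{\rel})\subseteq\C$, which \autoref{eg2} shows is strictly weaker than convex hull exactness in the unbounded setting. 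Moreover, the ``conic minimal-face identity'' you invoke in the favorable case is asserted without proof and is not obviously true when $\hat{\bm X}$ and $\bm D$ lie in incomparable faces of the cone (the smallest face containing $\hat{\bm X}+\alpha\bm D$ then contains both of them and may have dimension exceeding $\tilde m+1$, so it need not lie in $\X$). So the proposal correctly identifies the difficulty but leaves the sufficiency direction unproved.
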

\begin{proof}	
	Since $\clconv(\X)$ is a closed convex cone, we have that $\F^{\tilde{m}}(\clconv(\X))\subseteq \F^{\tilde{m}+1}(\clconv(\X))$ and $\reccone(\clconv(\X)) = \clconv(\X)$. Thus, $\F^{\tilde{m}}(\clconv(\X))+\F^{\tilde{m}+1}(\clconv(\X))=\F^{\tilde{m}+1}(\clconv(\X))$. 
	Using the result in \autoref{them:ubdconv}, it remains to show the necessity of the condition in Part (a). Suppose that set $\C_{\rel}$ achieves the convex hull exactness for any $m$ LMIs of dimension $\tilde{m}$ in RCOP \eqref{eq_rank}. According to \autoref{them:ext}, we must have that $F \subseteq \X$ if $F\in \F^{\tilde{m}}(\clconv(\X))$. Next, we show that 
	the domain set $\X$ contains all $(\tilde{m}+1)$-dimensional faces in its closed convex hull by contradiction.
	
	Suppose that $F^*$  is an $(\tilde{m}+1)$-dimensional face  of $\clconv(\X)$ that is not contained in $\X$. Then there is a half-line $\{\alpha \bm D: \alpha \ge 0\}$ in $F^*$ with nonzero direction $\bm D$ only intersected with $\X$ at origin, provided that the domain set $\X$ is conic. 
	Since $\dim(F^*)=\tilde{m}+1$, 
	there exists an $\tilde m$-dimensional LMI system
	$\H:=\{\bm X\in \Q: \langle{\bm A}_i, \bm X\rangle = 0, \forall i \in [m] \}$ such that the intersection set $F\cap \H$ is one-dimensional and  equal to $\{\alpha \bm D: \alpha \ge 0\}$.
	Given the LMI system $\H$, the two feasible sets $\C, \C_{\rel}$ defined in \eqref{eq_2sets} become
	\[\hat{\C}=\H\cap \X,   \ \  \hat{\C}_{\rel}=\H\cap \clconv(\X). \]
	Following the analysis in \autoref{them:ext}, we can show that $\{\alpha \bm D: \alpha \ge 0\}$ is an extreme ray in the recession cone of  the set $\hat{\C}_{\rel}$. According to \autoref{def:ext},  $\bm D$ is naturally an extreme direction of set $\C_{\rel}$.
	However, $\bm D$ cannot be an extreme direction in set $\clconv(\hat{\C})$ since it does not belong to the domain set $\X$, contradicting that $\hat\C_{\rel} = \clconv(\hat\C)$ for any $m$-dimensional LMIs and completes the proof. 
	\qed
\end{proof}

The result in \autoref{them:qcqpch} can be used to show the convex hull exactness for the QCQP in which the domain set is $\X:=\{\bm   X\in \S_+^{n+1}: \rank(\bm X)\le 1 \}$ and  thus conic.  Particularly, we remark that 
\begin{enumerate}[(i)]
	\item For QCQP, when the corresponding DWR set $\C_{\rel}$ is conic, the convex hull exactness reduces to the Rank-One Generated (ROG) property. Thus, \autoref{them:qcqpch} gives a simultaneously necessary and sufficient condition for the ROG property of QCQP.
	\item 	In \autoref{eg2}, as a special case of the domain set $\X$ of the QCQP with $n=2$, the set $\X$ in \autoref{eg2} contains any no larger than two-dimensional face of $\clconv(\X)$. We see that the convex hull exactness does not hold  in \autoref{eg2} since there are $\tilde m=2$-dimensional LMIs, which demonstrates the correctness of  \autoref{them:qcqpch}; and
	\item In contrast,  we show  that if there is only one $m=\tilde m=1$ LMI in \autoref{eg2}, then the convex hull exactness must hold based on \autoref{them:qcqpch} as illustrated below.
\end{enumerate}


\begin{example}\rm
	\label{eg3}
	Let us consider the same domain set $\X:= \{\bm X \in \S_+^2: \rank(\bm X ) \le 1\}$ as \autoref{eg2}. Intersecting $\X$ and its convex hull with an $m=\tilde m=1$ LMI- $X_{12}\le 0$ yields sets
	\begin{align*}
		& \C:=\{\bm X \in \S_+^2:  X_{12}\le 0 ,\rank(\bm X)\le 1 \},  \ \  \C_{\rel}:=\{\bm X \in \S_+^2:  X_{12}\le 0 \},
	\end{align*}
	respectively. This domain set $\X$ contains all faces of $\clconv(\X)$ with dimension up to two as shown in \autoref{eg2}. 
	As $m=\tilde{m}=1$, according to \autoref{them:qcqpch}, we must have convex hull exactness, i.e., $\clconv(\C)=\C_{\rel}$.
	In fact, set $\C$ is precisely the lower surface of domain set $\X$, as marked {in red} in Figure \ref{egc1}. The red area in Figure \ref{egcrel} illustrates set $\C_{\rel}$. It is seen that (i) both $\C$ and $\C_{\rel}$  are unbounded; (ii) the convex hull of set $\C$ is half closed and half open, as $X_{12}$ cannot attain zero; and (iii) the closed convex hull of set $\C$ is equal to set $\C_{\rel}$.  \qedA
\end{example}

\begin{figure}[ht]
	\centering
	\vskip -0.2in
	\subfigure[$\C$ ]{
		{\includegraphics[width=0.2\columnwidth]{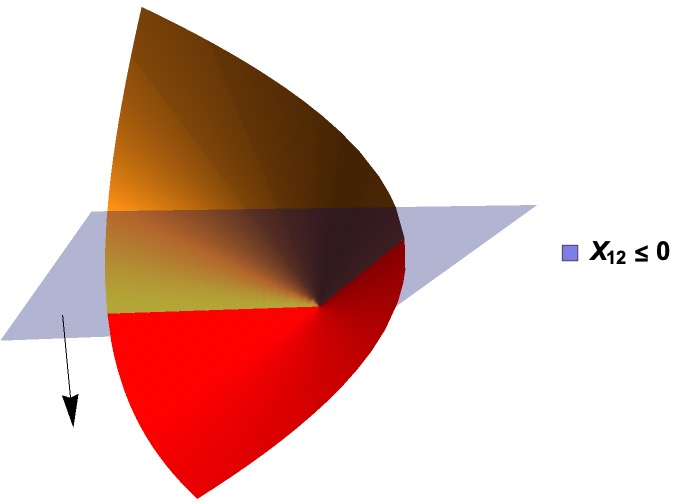}}	\label{egc1}}
	\hspace{4em}
	\subfigure[$\C_{\rel}$]{
		{\includegraphics[width=0.2\columnwidth]{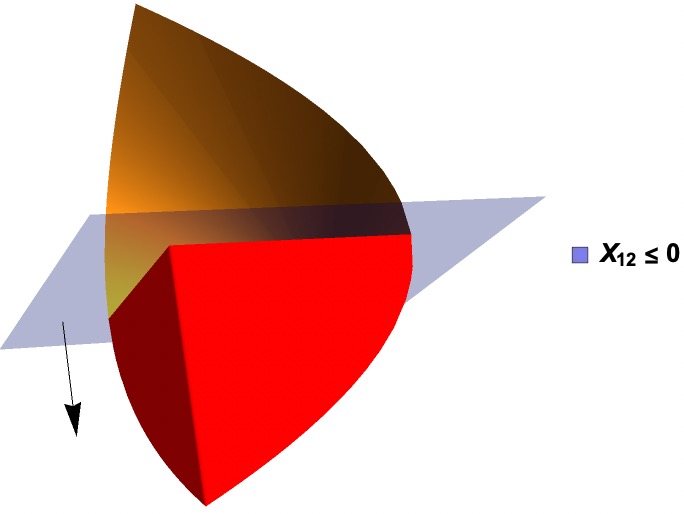}}	\label{egcrel}}
	\caption{Illustration of Sets in \autoref{eg3} with $\X := \{\bm X \in \S_{+}^2: \rank(\bm X)\le 1\}$ and we have $\clconv(\C) = \C_{\rel}$.}
	\label{fig:egubd}
\end{figure}

So far, the proposed necessary and sufficient conditions 
have revealed a significant connection between the DWR exactness and faces of set $\clconv(\X)$ that the domain set $\X$ contains.
The next subsection proves the  DWR exactness results using these conditions in some special cases of QCQP.

\subsection{Applying Our Proposed Necessary and Sufficient Conditions to QCQP}\label{sec:app}
As a special  yet important case of our RCOP \eqref{eq_rank},
this subsection investigates several QCQP problems to derive the extreme point exactness and convex hull  exactness for their corresponding DWRs. 
We first study what  faces of the closed convex hull (i.e., $\clconv(\X)$) are included in the domain set $\X$ of QCQP \eqref{qcqp}. 
Please note that for a particular QCQP example, we specify their related sets  $\X, \clconv(\X), \C, \C_{\rel}$.

As mentioned in Subsection \ref{subsec:scope}, the  domain set of general QCQP \eqref{eq_qcqp} only involves a rank-one constraint, i.e., $\X:=\{\bm X \in \S_+^{n+1}: \rank(\bm X)\le 1\}$, whose convex hull is closed, line-free, and equal to the positive semidefinite cone, i.e., $\clconv(\X)= \conv(\X) :=\S_+^{n+1}$. 
The domain set $\X$ in \autoref{eg2} is in fact a special case of QCQP with $n+1=2$.  Note that the domain set $\X$ in \autoref{eg2} contains all one- or two-dimensional faces of set $\clconv(\X)$ as shown in Figure \ref{eg2x} and Figure \ref{eg2convx}. We prove that this result can be extended to any solely rank-$k$ constrained positive semidefinite domain set as below. 
Different from \cite{pataki1998rank}, our proof idea focuses on the faces of set $\clconv(\X)$.
%


\begin{lemma}\label{lem:psd}
	Suppose that $\X=\{\bm X \in \S_+^{n+1}: \rank(\bm X)\le k\}$, i.e., $\Q=\S_+^{n+1}$ and $t= 0$ in the domain set \eqref{eq_set}.
	Then we have  $\clconv(\X) :=\S_+^n$, and any no larger than $\frac{k(k+3)}{2}$-dimensional face of $\clconv(\X) $ is contained in $\X$, i.e.,  $F \subseteq \X$ for all $F\in \F^{\frac{k(k+3)}{2}}(\clconv(\X))$.
\end{lemma}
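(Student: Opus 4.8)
The plan is to exploit the classical facial structure of the positive semidefinite cone: the faces of $\S_+^{n+1}$ are indexed by linear subspaces, their dimensions are triangular numbers $\tfrac{r(r+1)}{2}$, and every matrix in such a face has rank at most $r$. Thus a face of small dimension can only contain low-rank matrices, and the whole statement reduces to comparing triangular numbers against the threshold $\tfrac{k(k+3)}{2}$. (I read the target set as $\clconv(\X)=\S_+^{n+1}$; the exponent $n$ in the statement appears to be a typo for $n+1$.)

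First I would settle $\clconv(\X)=\S_+^{n+1}$. Since $\X$ is a closed cone and $k\ge 1$, every rank-one matrix $\bm v\bm v^{\top}$ lies in $\X$; by the spectral theorem any $\bm X\in\S_+^{n+1}$ of rank $r$ decomposes as $\sum_{i=1}^{r}\lambda_i \bm v_i\bm v_i^{\top}$ with $\lambda_i>0$, a nonnegative combination of elements of $\X$, so $\bm X\in\conv(\X)$. Together with $\X\subseteq\S_+^{n+1}$ and the convexity of $\S_+^{n+1}$, this gives $\conv(\X)=\S_+^{n+1}$, which is already closed, hence $\clconv(\X)=\S_+^{n+1}$.

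The core step is the face count. I would use the standard description that every nonempty face $F$ of $\S_+^{n+1}$ has the form $F_V=\{\bm X\in\S_+^{n+1}:\col(\bm X)\subseteq V\}$ for a subspace $V\subseteq\Re^{n+1}$, where $F_V$ is linearly isomorphic to $\S_+^{r}$ with $r=\dim V$; consequently $\dim F_V=\tfrac{r(r+1)}{2}$ and every member of $F_V$ has rank at most $r$. Equivalently, it suffices to observe that the minimal face of $\S_+^{n+1}$ containing a rank-$r$ matrix $\bm X$ is $F_{\col(\bm X)}$, whose affine hull is exactly the symmetric matrices supported on $\col(\bm X)$, of dimension $\tfrac{r(r+1)}{2}$. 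Because the paper wants a face-based argument distinct from \cite{pataki1998rank}, I would give this short self-contained computation of the affine hull rather than merely citing the facial classification.

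Now the arithmetic finishes the proof. Let $F\in\F^{\frac{k(k+3)}{2}}(\clconv(\X))$ and write $F=F_V$ with $r=\dim V$, so that $\tfrac{r(r+1)}{2}=\dim F\le \tfrac{k(k+3)}{2}$. Since $\tfrac{(k+1)(k+2)}{2}=\tfrac{k(k+3)}{2}+1>\tfrac{k(k+3)}{2}$, we cannot have $r\ge k+1$; hence $r\le k$, and every matrix in $F$ has rank at most $r\le k$, i.e.\ $F\subseteq\X$. I would also record that this threshold is sharp: the face $F_V$ with $\dim V=k+1$ has dimension exactly $\tfrac{k(k+3)}{2}+1$ and contains rank-$(k+1)$ matrices lying outside $\X$. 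The only nontrivial ingredient is the value $\tfrac{r(r+1)}{2}$ for the dimension of the minimal face through a rank-$r$ matrix, so I expect the main obstacle to be phrasing that facial computation cleanly; the dimension bookkeeping afterward is a one-line comparison.
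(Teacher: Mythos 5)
Your proposal is correct, and it takes a genuinely different route from the paper. You invoke the classical facial classification of the positive semidefinite cone -- every face is $F_V=\{\bm X\in\S_+^{n+1}:\col(\bm X)\subseteq V\}$, linearly isomorphic to $\S_+^{\dim V}$ and hence of dimension $\tfrac{r(r+1)}{2}$ -- and then finish with the one-line comparison $\tfrac{(k+1)(k+2)}{2}=\tfrac{k(k+3)}{2}+1$. The paper instead argues by contradiction and never cites the facial classification: given a face $F$ of dimension $d\le\tfrac{k(k+3)}{2}$ containing a matrix $\hat{\bm X}=\bm Q\bm\Lambda\bm Q^{\top}$ of rank $r>k$, it uses the dimension count $\tfrac{r(r+1)}{2}>d$ to produce a nonzero $\bm\Delta\in\S^r$ with $\bm Q\bm\Delta\bm Q^{\top}$ orthogonal to $\aff(F)$, perturbs $\hat{\bm X}\pm\delta\bm Q\bm\Delta\bm Q^{\top}$ while staying in $\S_+^{n+1}$ (possible since $\bm\Lambda\succ 0$), and invokes the definition of a face to force both perturbations into $F$, contradicting orthogonality. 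The two arguments are close cousins -- your reduction to ``the minimal face through a rank-$r$ matrix has dimension $\tfrac{r(r+1)}{2}$'' is essentially what the paper's perturbation establishes on the fly -- but yours is shorter, immediately yields the sharpness remark, and cleanly isolates the classification as the only nontrivial input; the paper's is self-contained and, more importantly, is a template the authors reuse verbatim for domain sets whose facial structure is \emph{not} classified (e.g., the FPCA set $\{\bm X\in\S_+^n:\rank(\bm X)\le k,\,\|\bm X\|_2\le 1\}$ in their \autoref{lem:fpca}), which your classification-based route would not cover. You are also right that $\clconv(\X):=\S_+^n$ in the statement is a typo for $\S_+^{n+1}$.
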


\begin{proof}
	Given $\X=\{\bm X \in \S_+^{n+1}: \rank(\bm X)\le k\}$,	we must have $\clconv(\X)=\conv(\X) :=\S_+^n$ because any positive semidefinite matrix can be written as a convex combination of rank-one matrices. 
	
	Next, we  prove the facial inclusion result. 
	Observe that any extreme point of set $\clconv(\X)$ belongs to  $\X$. Let us denote by $F$ a face in $\clconv(\X)$ with dimension $d \le \frac{k(k+3)}{2}$. Suppose that $F$ is not contained in set $\X$, then there exists a matrix $\hat{\bm X}\in F$ with rank  $r> k$, i.e., $\hat{\bm X} \notin \X$. Let $\hat{\bm X} = \bm Q \bm \Lambda \bm Q^{\top}$ denote  the eigen-decomposition of matrix $\hat{\bm X}$ 
	where $\bm \Lambda \in \S_{++}^r$ consists of positive eigenvalues, and the eigenvector matrix $\bm Q \in \Re^{(n+1)\times r}$ has rank-$r$.
	
	Since face $F$ is of dimension $d$,  there are $d+1$ distinct and linearly independent points in $F$, denoted by $\{\bm X_i\}_{i\in [d+1]}\subseteq \S_{+}^n$ such that $$F\subseteq \aff(\bm X_1, \bm X_2, \cdots, \bm X_{d+1}).$$
	In addition, given $r\ge k+1$, we have $\frac{r}{2}(r+1) \ge  \frac{k+1}{2}(k+2) > \frac{k}{2}(k+3) \ge d $; hence,  there exists a nonzero symmetric matrix $\bm \Delta \in S^r$ satisfying
	\[\langle \bm \Delta, \bm Q^{\top} (\bm X_1-\bm X_i)\bm Q \rangle = 0, \forall i\in [2, d+1],\]
	which means that $\bm Q \bm \Delta \bm Q^{\top}$ is a nonzero matrix orthogonal to the face $F$, 
	i.e., $\bm Q \bm \Delta \bm Q^{\top} \perp F$.

	%
	
	Then let us construct two matrices $\bm X^+$ and $\bm X^-$ as below
	\[\bm X^+:= \hat{\bm X}+ \delta \bm Q \bm \Delta \bm Q^{\top} ,  \ \ \bm X^-:=\hat{\bm X}- \delta \bm Q \bm \Delta \bm Q^{\top},\]
	where $\delta >0$. 
	It is clear that $\bm X^+$ and $\bm X^-$ have nonzero eigenvalues identical to  $\bm \Lambda + \delta \bm \Delta$ and $\bm \Lambda - \delta\bm \Delta$, respectively. 
	Since $\bm \Lambda \in \S_{++}^r$, we can always make $\delta$ small enough such that $\bm X^+, \bm X^-\in \clconv(\X)$. 
	
	According to \autoref{def:face} of the face $F$, we conclude that $\bm X^+, \bm X^-\in F$ since $\hat{\bm X}=1/2\bm X^+ + 1/2\bm X^-$, which contradicts $\bm Q \bm \Delta \bm Q^{\top} \perp F$. Therefore, any face of dimension at most $\frac{k}{2}(k+3)$ in $\clconv(\X)$ belongs to $\X$. 	\qed 
	%
\end{proof}

The facial inclusion result in \autoref{lem:psd} enables us to apply the proposed  exactness conditions 
to the DWR \eqref{eq_rel_rank}  of QCQP. As a side product, if no DWR exactness holds, we can derive an upper bound for the largest rank among all the extreme points in the feasible set $\C_{\rel}$.
The results are summarized below. 
\begin{theorem}\label{them:egqcqp}
	Suppose the domain set $\X:=\{\bm X \in \S_+^{n+1}: \rank(\bm X)\le 1\}$ in QCQP \eqref{eq_qcqp},
	then we have
	\begin{enumerate}[(i)]
		\item 	
		The DWR \eqref{eq_rel_rank} attains  extreme point exactness for any $\tilde m\le 2$-dimensional LMIs;
		\item 	The DWR \eqref{eq_rel_rank} attains  convex hull exactness for any $\tilde m\le 1$-dimensional LMIs;
		\item 
		Each extreme point in set $\C_{\rel}$ has a rank at most $r^*$, where $r^*$ is the smallest integer satisfying $r^*(r^*+1) \le 2\tilde{m}$ for some $\tilde{m}\ge 3$.
	\end{enumerate}
\end{theorem}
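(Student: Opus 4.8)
The plan is to reduce every part to the facial-inclusion result of \autoref{lem:psd} specialized to $k=1$, combined with the exactness characterizations already proved. Setting $k=1$ in \autoref{lem:psd} gives $\clconv(\X)=\S_+^{n+1}$, a closed, pointed (line-free) convex cone, together with the facial threshold $\frac{k(k+3)}{2}=2$; hence every face of $\clconv(\X)$ of dimension at most $2$ is contained in $\X$, i.e. $F\subseteq\X$ for all $F\in\F^{2}(\clconv(\X))$. I will also use two immediate structural facts: $\X$ is conic, since $\alpha\bm X\in\S_+^{n+1}$ and $\rank(\alpha\bm X)\le 1$ whenever $\bm X\in\X$ and $\alpha\ge 0$, and $\clconv(\X)$ is line-free. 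With these in hand, Parts (i) and (ii) are direct applications of the earlier theorems.

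For Part (i) I would invoke \autoref{them:ext}. When $\tilde m\le 2$ we have $\F^{\tilde m}(\clconv(\X))\subseteq\F^{2}(\clconv(\X))$, so the Inclusive Face condition (a) of \autoref{them:ext} holds by the displayed containment above, and the theorem yields extreme point exactness $\ext(\C_{\rel})\subseteq\C$ for any $\tilde m\le 2$-dimensional LMIs. For Part (ii), because $\X$ is conic and $\clconv(\X)$ is line-free, I would instead apply \autoref{them:qcqpch}, whose Inclusive Face condition requires $F\subseteq\X$ for all $F\in\F^{\tilde m+1}(\clconv(\X))$. For $\tilde m\le 1$ this set is contained in $\F^{2}(\clconv(\X))$, again covered by \autoref{lem:psd}, so convex hull exactness $\C_{\rel}=\clconv(\C)$ follows.

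For Part (iii) the plan is to combine \autoref{cor_lem:genext} with the explicit facial geometry of the semidefinite cone. Let $\hat{\bm X}\in\ext(\C_{\rel})$ and set $r:=\rank(\hat{\bm X})$. Applying \autoref{cor_lem:genext} with $D:=\clconv(\X)=\S_+^{n+1}$ and $\H:=\C_{\rel}$, the point $\hat{\bm X}$ lies in some face $F'$ of $\S_+^{n+1}$ with $\dim(F')\le\tilde m$. The key fact I would cite is that every nonzero face of $\S_+^{n+1}$ has the form $\{\bm X\in\S_+^{n+1}:\col(\bm X)\subseteq V\}$ for a subspace $V$ and is isomorphic to $\S_+^{\dim V}$, hence has dimension $\frac{\dim V(\dim V+1)}{2}$. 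In particular, the minimal face containing $\hat{\bm X}$ corresponds to $V=\col(\hat{\bm X})$ and has dimension $\frac{r(r+1)}{2}$; since this minimal face is contained in $F'$, I obtain $\frac{r(r+1)}{2}\le\dim(F')\le\tilde m$, i.e. $r(r+1)\le 2\tilde m$. Thus the rank of every extreme point of $\C_{\rel}$ is bounded by $r^{*}$, the largest integer satisfying $r^{*}(r^{*}+1)\le 2\tilde m$, which is the claimed bound (this is precisely the Barvinok--Pataki type rank bound for extreme points of a spectrahedron).

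The main obstacle I anticipate is Part (iii), the only part not obtained by directly quoting an exactness theorem. It hinges on correctly invoking the facial structure of $\S_+^{n+1}$: identifying the minimal face through $\hat{\bm X}$, computing its dimension as $\frac{r(r+1)}{2}$, and arguing that it sits inside the $\le\tilde m$-dimensional face supplied by \autoref{cor_lem:genext}. Some care is also needed in the regime where $\tilde m$ is large enough that $\clconv(\X)$ admits no proper face of dimension $\le\tilde m$, in which case the bound is vacuous; restricting to $\tilde m\ge 3$ as in the statement keeps $r^{*}\ge 2$, so the conclusion is informative beyond the rank-one exactness already guaranteed by Parts (i) and (ii).
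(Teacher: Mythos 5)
Your proposal is correct. Parts (i) and (ii) are exactly the paper's argument: apply \autoref{lem:psd} with $k=1$ to get $F\subseteq\X$ for all $F\in\F^{2}(\clconv(\X))$, then feed this into \autoref{them:ext} for $\tilde m\le 2$ and into \autoref{them:qcqpch} (using that $\X$ is conic and $\S_+^{n+1}$ is line-free) for $\tilde m\le 1$. For Part (iii) you take a slightly different route from the paper: you import the standard classification of the faces of $\S_+^{n+1}$ (each face is $\{\bm X\in \S_+^{n+1}:\col(\bm X)\subseteq V\}$, of dimension $\tfrac{\dim V(\dim V+1)}{2}$), identify the minimal face through an extreme point $\hat{\bm X}$ of rank $r$ as having dimension $\tfrac{r(r+1)}{2}$, and sandwich it inside the $\le\tilde m$-dimensional face supplied by \autoref{cor_lem:genext} to get $r(r+1)\le 2\tilde m$ directly. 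The paper instead stays self-contained: it argues by contradiction, reusing \autoref{lem:psd} with $k=r^*-1$ (whose threshold $\tfrac{k(k+3)}{2}=\tfrac{(r^*-1)(r^*+2)}{2}$ is exactly $\tfrac{r^*(r^*+1)}{2}-1$) together with \autoref{lem:genext} to force the rank down to $r^*-1$. The two arguments encode the same geometric fact --- a face of $\S_+^{n+1}$ containing a rank-$r$ matrix has dimension at least $\tfrac{r(r+1)}{2}$ --- so both are valid; yours is more explicit about the facial structure, the paper's avoids citing it by leaning on its own lemma. You also correctly read $r^*$ as the \emph{largest} integer with $r^*(r^*+1)\le 2\tilde m$ (the Barvinok--Pataki bound), which is what the statement intends despite saying ``smallest.''
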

\begin{proof}
	The proof includes two parts.
	\begin{enumerate}[(i)]
		\item According to \autoref{lem:psd} with $k=1$ and \autoref{them:ext},  we have the extreme point exactness.
		\item According to \autoref{lem:psd} with $k=1$  and \autoref{them:qcqpch}, the  convex hull exactness holds when $\tilde{m}\le 1$.
		\item 
			We prove the rank bound by contradiction. Suppose there is an extreme point $\hat{\bm X}$ in set $\C_{\rel}$ such that $r^*(r^*+1)> 2\tilde{m}  \Longleftrightarrow \tilde{m} \le  \frac{r^*-1}{2}(r^*+2)$. According to \autoref{lem:genext}, the extreme point $\hat{\bm X}$ is contained in a face of set $\conv(\X)$ with dimension at most $\frac{r^*-1}{2}(r^*+2)$.
			Using in \autoref{lem:psd} with $k=r^*-1$, the extreme point $\hat{\bm X}$ should have rank $r^*-1$, a contradiction. \qed
		\end{enumerate}
		%
		%
	\end{proof}
	
	We remark that the exactness results in Part (i) and Part (ii) of \autoref{them:egqcqp} do not require any additional assumption, and they can be applied to general QCQP \eqref{eq_qcqp}.  
	Besides, the rank bound in Part (iii) of \autoref{them:egqcqp} recovers the classical result in QCQP, which has been independently proved by  \cite{barvinok1995problems,deza1997geometry,pataki1998rank}.
	Since set $\C_{\rel}\subseteq \S_{+}^n$ for QCQP  contains no line, the extreme point exactness  implies the objective exactness for  any linear objective function with $\V_{\rel} >-\infty$. 
	
	Many results have been developed for the DWR exactness of QCQP  with one or two quadratic constraints.
	Our \autoref{them:egqcqp} can immediately recover or generalize those results and our proof does not rely on strong duality or Slater condition. 
	In what follows, we show the DWR exactness for 
	QCQP with one quadratic constraint (QCQP-1) and homogeneous QCQP with two quadratic constraints (HQP-2).
	

	\noindent \textbf{QCQP with One Quadratic Constraint (QCQP-1).} Formally, QCQP-1 is defined as
	\[ \text{(QCQP-1) } \quad \min_{\bm x \in \Re^n} \left\{ \bm x^{\top} \bm Q_0 \bm x + \bm q_0^{\top} \bm x + b:   b_1^l\le   \bm x^{\top} \bm Q_1 \bm x + \bm q_1^{\top} \bm x \le b_1^u \right\}, \]
	which can be formulated as a special case of our RCOP \eqref{eq_rank} with  $m=\tilde{m}=2$ LMIs as below
	\begin{align*}
		\min_{\bm X \in \X} \left\{\langle\bm A_0, \bm X\rangle: 
		b_1^l\le \langle \bm A_1, \bm X\rangle \le b_1^u,  X_{11}=1 \right\}, \ \  \X:=\{\bm X\in \S_+^{n+1}: \rank(\bm X)\le 1\},
	\end{align*}
	where ${\bm A}_0 = \begin{pmatrix}
		b& { \bm q_0^{\top}}/{2}\\
		{\bm q_0}/{2}  & \bm Q_0\\
	\end{pmatrix}$ and ${\bm A}_{1} = \begin{pmatrix}
		0& { \bm q_1^{\top}}/{2}\\
		{\bm q_1}/{2}  & \bm Q_1\\
	\end{pmatrix}$. 
		Then Part (i) of  \autoref{them:egqcqp} implies the following conclusion.
		\begin{corollary}\label{cor:qp1}
			For QCQP-1, its corresponding DWR  admits the extreme point exactness.
		\end{corollary}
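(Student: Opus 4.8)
The plan is to apply Part (i) of \autoref{them:egqcqp} directly, since that result already delivers extreme point exactness for the rank-one positive semidefinite domain set whenever the technology matrices in the LMIs span a space of dimension $\tilde{m}\le 2$. Thus the entire task reduces to checking that the matrix reformulation of QCQP-1 satisfies the hypothesis $\tilde{m}\le 2$, together with confirming that its domain set is precisely the one treated in \autoref{them:egqcqp}.

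First, I would identify the technology matrices appearing in the LMIs of the matrix reformulation of QCQP-1. There are exactly two LMIs: the single two-sided quadratic constraint $b_1^l\le \langle \bm A_1, \bm X\rangle \le b_1^u$, with $\bm A_1 = \begin{pmatrix} 0 & \bm q_1^{\top}/2 \\ \bm q_1/2 & \bm Q_1 \end{pmatrix}$, and the normalization $X_{11}=1$, which is the LMI $\langle \bm E_{11}, \bm X\rangle = 1$ where $\bm E_{11}\in \S^{n+1}$ has its only nonzero entry equal to $1$ in position $(1,1)$ (equivalently, a two-sided LMI with coincident bounds). Hence the set of technology matrices is $\{\bm A_1, \bm E_{11}\}$, and $m=2$.

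Second, since there are only two technology matrices, the number $\tilde{m}$ of linearly independent ones is trivially at most $2$; indeed $\bm A_1$ and $\bm E_{11}$ are linearly independent unless $\bm A_1=\bm 0$, in which case $\tilde{m}=1$. Either way $\tilde{m}\le 2$, so the dimension hypothesis of \autoref{them:egqcqp}(i) is met. Because the domain set of QCQP-1 coincides exactly with $\X=\{\bm X\in \S_+^{n+1}: \rank(\bm X)\le 1\}$, Part (i) of \autoref{them:egqcqp} applies verbatim and yields $\ext(\C_{\rel})\subseteq \C$, i.e., extreme point exactness of the DWR.

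The main obstacle here is essentially nonexistent: the corollary is a direct specialization of \autoref{them:egqcqp}(i). The only point deserving care is the bookkeeping that the equality $X_{11}=1$ is counted as one LMI, so that $m=2$ and consequently $\tilde{m}\le 2$; once this is noted, the conclusion is immediate and, importantly, requires no Slater condition or dual assumption of any kind.
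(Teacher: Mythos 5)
Your proposal is correct and matches the paper's own argument: the paper likewise reformulates QCQP-1 as an RCOP with the two LMIs $b_1^l\le\langle\bm A_1,\bm X\rangle\le b_1^u$ and $X_{11}=1$, observes that $\tilde m\le 2$, and invokes Part (i) of \autoref{them:egqcqp} (which rests on \autoref{lem:psd} with $k=1$ and \autoref{them:ext}). Your extra remark on the degenerate case $\bm A_1=\bm 0$ is a harmless refinement of the same bookkeeping.
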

		QCQP-1 covers many important and challenging quadratic optimization problems
		which have attracted much attention in various applications from different domains such as robust optimization \citep{ben2009robust}, regularization problem (e.g., ridge regression) \citep{hoerl1970ridge,tikhonov1977solutions,xie2020scalable}, and subproblems in signal preprocessing \citep{huang2016consensus}. We show that some widely-studied special cases of QCQP-1 may possess the  convex hull exactness more than the  extreme point exactness in \autoref{cor:qp1}, as discussed below. 

		
		\noindent \textit{Trust Region Subproblem (TRS).} The classical TRS, a special case of QCQP-1, is  to minimize a quadratic objective over a ball ($\bm x ^{\top} \bm x\le 1$), arising from trust region methods for nonlinear programming \citep{conn2000trust}. 
		Albeit being nonconvex, the TRS problem is known to achieve DWR objective exactness and strong duality (see a survey \cite{polik2007survey}).   
		Recently, \cite{burer2015gentle} explicitly described  the convex hull of feasible set of the TRS based on a  second-order cone. Our result of QCQP-1 in \autoref{cor:qp1} implies that the DWR of TRS problem also achieves convex hull exactness.
		\begin{corollary}[TRS] \label{cor:trs}
			The  convex hull exactness holds for the TRS problem.
		\end{corollary}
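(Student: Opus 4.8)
The plan is to realize TRS as the instance of QCQP-1 whose single two-sided quadratic constraint is the unit ball $\bm x^\top\bm x\le 1$, and then to upgrade the extreme point exactness already guaranteed by \autoref{them:egqcqp}(i) to convex hull exactness by exploiting the compactness of the DWR feasible set via \autoref{them:exactch}. First I would put TRS into the matrix form \eqref{eq_qcqp}: taking $\bm Q_1=\bm I_n$ and $\bm q_1=\bm 0$ gives the constraint $\langle\bm A_1,\bm X\rangle\le 1$ with $\bm A_1=\begin{pmatrix}0&\bm 0^\top\\\bm 0&\bm I_n\end{pmatrix}$, together with the lifting equality $X_{11}=1$, i.e. $\langle\bm E_{11},\bm X\rangle=1$. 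Over the conic domain $\X=\{\bm X\in\S_+^{n+1}:\rank(\bm X)\le 1\}$ with $\clconv(\X)=\S_+^{n+1}$, the DWR feasible set becomes
\[\C_{\rel}=\{\bm X\in\S_+^{n+1}: X_{11}=1,\ \tr(\bm X_{22})\le 1\},\]
where $\bm X_{22}$ is the trailing $n\times n$ block; since $\bm E_{11}$ and $\bm A_1$ are linearly independent, $\tilde m=2$.

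The key step is to check that $\C_{\rel}$ is compact. For any $\bm X\in\C_{\rel}$ we have $\tr(\bm X)=X_{11}+\tr(\bm X_{22})\le 2$, and the slice of the positive semidefinite cone $\S_+^{n+1}$ cut out by the bounded-trace slab $\{\bm X:\tr(\bm X)\le 2\}$ is closed and bounded. With this in hand, \autoref{them:egqcqp}(i) supplies the extreme point exactness $\ext(\C_{\rel})\subseteq\C$ for the $\tilde m=2$ LMI system of TRS, and the implication $(b)\Rightarrow(c)$ of \autoref{them:exactch} then converts it into convex hull exactness: since a compact convex set equals the convex hull of its extreme points, $\C_{\rel}=\conv(\ext(\C_{\rel}))\subseteq\conv(\C)\subseteq\C_{\rel}$, so that $\C_{\rel}=\conv(\C)=\clconv(\C)$.

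The only real obstacle is verifying the boundedness of $\C_{\rel}$; everything else is a direct reading-off of the previously established results. Boundedness is precisely what lets us sidestep the recession-cone machinery of \autoref{them:ubdconv} and the failure mode illustrated in \autoref{eg2}, where an unbounded $\C_{\rel}$ breaks convex hull exactness even though extreme point exactness holds. It is worth noting that the same argument would apply to any QCQP-1 instance whose quadratic constraint forces $\tr(\bm X_{22})$ to be bounded, so the ball structure of TRS enters only through this compactness certificate.
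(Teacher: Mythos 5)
Your proposal is correct and follows essentially the same route as the paper: both invoke the extreme point exactness of QCQP-1 (\autoref{cor:qp1}, i.e.\ \autoref{them:egqcqp}(i) with $\tilde m=2$) and then use the boundedness of $\C_{\rel}$ forced by the ball constraint together with the equivalence of Parts (b) and (c) in \autoref{them:exactch}. The only difference is that you spell out the compactness certificate explicitly via the trace bound $\tr(\bm X)\le 2$ on the positive semidefinite cone, which the paper simply asserts.
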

		\begin{proof}
			The constraint $\bm x^{\top}\bm x\le 1$ in the TRS implies that
			the DWR problem of TRS  has a bounded feasible set $\C_{\rel}$, and thus, using \autoref{cor:qp1}, the convex hull exactness follows from the equivalence between Part (b) and Part (c) in \autoref{them:exactch}. \qed
		\end{proof}

		
		\noindent \textit{Generalized TRS (GTRS).} Replacing the ball constraint in TRS by an arbitrary one-sided  quadratic constraint (i.e., $\bm x^{\top} \bm Q_1 \bm x + \bm q_1^{\top} \bm x \le b_1$)  leads to the following GTRS problem:
		\begin{align}\label{eq:gtrs}
			\text{(GTRS)} \ \ \min_{\bm X \in \X} \left\{\langle\bm A, \bm X\rangle: 
			\langle \bm A_1, \bm X\rangle \le b_1,  X_{11}=1 \right\}, \ \  \X:=\{\bm X\in \S_+^{n+1}: \rank(\bm X)\le 1\},
		\end{align}
		which satisfies $\tilde{m}=2$ and thus admits the extreme point exactness as a special case of QCQP-1. Note that the feasible set $\C_{\rel}$ in the DWR of GTRS \eqref{eq:gtrs} can be unbounded, which often results in the failure of  convex hull exactness; see, e.g., \autoref{eg2}.  
		On the other hand, the special linear constraint $X_{11}=1$ inspires us 
		to prove the convex hull exactness in the corollary below.
		It is worth
		mentioning that our result strengthens the one  in \cite{kilincc2021exactness} that  relies on the assumption that the dual set of DWR of the GTRS  is strictly feasible.
		\begin{restatable}{corollary}{corgtrs}\textbf{\rm \textbf{(GTRS)}} \label{cor:gtrs}
			The convex hull exactness holds for the GTRS  problem.
		\end{restatable}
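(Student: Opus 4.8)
The plan is to prove the two inclusions of $\C_{\rel}=\clconv(\C)$. Since $\clconv(\C)\subseteq\C_{\rel}$ always holds, the work is to establish $\C_{\rel}\subseteq\clconv(\C)$, and I would do this through the representation theorem 18.5 in \cite{rockafellar2015convex}, exactly mirroring the proof of \autoref{them:ubdconv}: write $\C_{\rel}$ as the closed convex hull of its extreme points and extreme directions, then argue that the former lie in $\C$ and the latter are recession directions of $\clconv(\C)$. The extreme points are immediate, since by \autoref{cor:qp1} the DWR of QCQP-1 (hence of GTRS) is extreme-point exact, so $\ext(\C_{\rel})\subseteq\C\subseteq\clconv(\C)$; I may also assume $\C\neq\emptyset$, as otherwise extreme-point exactness and line-freeness force $\C_{\rel}=\emptyset$. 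It is worth stressing that the generic sufficient condition of \autoref{them:ubdconv} \emph{fails} here, because $\clconv(\X)=\S_+^{n+1}$ has three-dimensional (rank-two) faces not contained in $\X$; the special equality $X_{11}=1$ is precisely what rescues the recession-cone analysis.

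The heart of the argument is the recession cone. Using that the recession cone of an intersection equals the intersection of recession cones (\cite{luc1990recession}), I would compute $\reccone(\C_{\rel})=\{\bm D\in\S_+^{n+1}:D_{11}=0,\ \langle\bm A_1,\bm D\rangle\le 0\}$. Because $\bm D\succeq\bm 0$ with $D_{11}=0$ forces the entire first row and column of $\bm D$ to vanish (the $2\times2$ principal minors are nonnegative), every such $\bm D$ has the block form $\bm D=\begin{pmatrix}0&\bm 0^\top\\ \bm 0&\bm D'\end{pmatrix}$ with $\bm D'\in\S_+^n$ and $\langle\bm Q_1,\bm D'\rangle\le 0$; that is, $\reccone(\C_{\rel})$ is linearly isomorphic to $K:=\{\bm D'\in\S_+^n:\langle\bm Q_1,\bm D'\rangle\le 0\}$. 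The constraint $X_{11}=1$ has thus effectively collapsed the count from $\tilde m=2$ to a single LMI. Applying \autoref{cor_lem:genray} to $K$ (with $D=\S_+^n$ and the single LMI $\langle\bm Q_1,\cdot\rangle\le 0$, so $\tilde m=1$) together with \autoref{lem:psd} for $k=1$, each extreme ray of $K$ lies in a face of $\S_+^n$ of dimension at most two and is therefore rank-one. Hence every extreme direction of $\C_{\rel}$ takes the form $\bm D=\begin{pmatrix}0&\bm 0^\top\\ \bm 0&\bm d\bm d^\top\end{pmatrix}$ with $\bm d^\top\bm Q_1\bm d\le 0$.

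It remains to show each such $\bm D$ is a recession direction of $\clconv(\C)$, and this I expect to be the main obstacle, because $\bm d$ need not be a recession direction of the feasible region $S:=\{\bm x:\bm x^\top\bm Q_1\bm x+\bm q_1^\top\bm x\le b_1\}$ (when $\bm d^\top\bm Q_1\bm d=0$ the linear term can push $\bm x_0+s\bm d$ out of $S$). The resolution exploits that $\clconv(\C)$ is closed: writing $\bm X(\bm x):=\begin{pmatrix}1&\bm x^\top\\ \bm x&\bm x\bm x^\top\end{pmatrix}$ and $g(\bm x):=\bm x^\top\bm Q_1\bm x+\bm q_1^\top\bm x$, I would fix any $\bm x_0\in S$ (so $\bm X(\bm x_0)\in\C$) and note $\bm X(\bm x_0+\sigma s\bm d)=\bm X(\bm x_0)+\sigma s\,\bm E+s^2\bm D$ for $\sigma\in\{+1,-1\}$ and a fixed matrix $\bm E$. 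Since the quadratic $s\mapsto g(\bm x_0+\sigma s\bm d)$ has leading coefficient $\bm d^\top\bm Q_1\bm d\le 0$, I can select the sign $\sigma$ for which $g(\bm x_0+\sigma s\bm d)\le b_1$ holds along an unbounded sequence $s_k\to\infty$; then $\bm X(\bm x_0+\sigma s_k\bm d)\in\C$ and $s_k^{-2}\bm X(\bm x_0+\sigma s_k\bm d)\to\bm D$, so by the standard characterization of the recession cone of a closed convex set, $\bm D\in\reccone(\clconv(\C))$. Combining the three facts through the representation theorem gives $\C_{\rel}\subseteq\clconv(\C)$, hence convex hull exactness. (When $\C_{\rel}$ is bounded there are no extreme directions and the claim already follows from the equivalence of parts (b) and (c) in \autoref{them:exactch}.)
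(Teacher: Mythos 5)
Your proposal is correct, and its skeleton matches the paper's: reduce via the representation theorem to (i) extreme points of $\C_{\rel}$ lying in $\C$ (immediate from \autoref{cor:qp1}) and (ii) extreme directions of $\C_{\rel}$ being recession directions of $\clconv(\C)$, with the recession cone of $\C_{\rel}$ collapsing, thanks to $X_{11}=1$, to rank-one blocks $\bm D=\begin{pmatrix}0&\bm 0^\top\\ \bm 0&\bm d\bm d^\top\end{pmatrix}$ with $\bm d^\top\bm Q_1\bm d\le 0$. Where you genuinely depart from the paper is in step (ii), which is the bulk of the paper's appendix proof: the paper fixes $\alpha\ge 0$ and exhibits $\hat{\bm X}+\alpha\bm D$ explicitly as a convex combination of two rank-one feasible matrices, splitting into the cases $\langle\bm Q_1,\bm y\bm y^\top\rangle-|\bm q_1^\top\bm y|\neq 0$, $\bm Q_1\bm y=\bm 0$, and $\bm Q_1\bm y\neq\bm 0$ (the last handled by perturbing to $\bm D_\epsilon$ and passing to the limit in the closed cone $\reccone(\clconv(\C))$). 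You instead invoke the sequential characterization of the recession cone of a nonempty closed convex set (Rockafellar, Theorem 8.2): choose the sign $\sigma$ so that the quadratic $s\mapsto g(\bm x_0+\sigma s\bm d)$, whose leading coefficient is $\bm d^\top\bm Q_1\bm d\le 0$, stays $\le b_1$ along an unbounded sequence $s_k$, and observe $s_k^{-2}\bm X(\bm x_0+\sigma s_k\bm d)\to\bm D$ with $\bm X(\bm x_0+\sigma s_k\bm d)\in\C$. This single argument absorbs all three of the paper's cases (degenerate leading coefficient, vanishing linear term, and the mixed case) and is both shorter and more robust; what it buys is economy, while the paper's explicit convex-combination decomposition buys a constructive certificate showing $\hat{\bm X}+\alpha\bm D\in\conv(\C)$ (not merely $\clconv(\C)$) for every $\alpha$. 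Your handling of the side conditions --- $\C\neq\emptyset$ via line-freeness plus extreme-point exactness, and the observation that \autoref{them:ubdconv} is inapplicable because $\S_+^{n+1}$ has three-dimensional rank-two faces outside $\X$ --- is also correct.
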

		\begin{proof}
			See Appendix \ref{proof:corgtrs}. \qed
		\end{proof}
		
		\noindent \textit{Two-sided GTRS.} As an extension of GTRS, the two-sided  GTRS problem has a two-sided quadratic constraint, i.e., $-\infty<b_1^l \le \bm x^{\top} \bm Q_1 \bm x + \bm q_1^{\top} \bm x \le b_1^u<+\infty$,  which has been successfully applied to signal processing (see \cite{huang2014randomized} and references therein). Using S-lemma, {the objective exactness} for the DWR of the two-sided GTRS   has been established under Slater assumption  (see survey by \cite{wang2015strong}  and references therein), which is equivalent to the extreme point exactness.
		According to \autoref{cor:qp1}, 
		we can readily derive  the first-known DWR extreme point exactness of the two-sided  GTRS without Slater condition.  
		Recent work by \cite{joyce2021convex} showed that the two-sided GTRS has
		the convex hull exactness given that the data matrix $\bm Q_1$ above is nonzero, which can be recovered by our framework. However, the convex hull exactness may not  hold for the general two-sided GTRS (see \autoref{eg2} with $\bm Q_1=\bm 0$).
		
		\begin{corollary}[Two-Sided GTRS]\label{cor:tgtrs}
			For the two-sided GTRS  problem, we have 
			\begin{enumerate}[(i)]
				\item The extreme point exactness holds;
				\item The convex hull exactness holds when $\bm Q_1 \neq \bm 0$ and $-\infty < b_1^l \le b_1^u < +\infty$. 
			\end{enumerate}
		\end{corollary}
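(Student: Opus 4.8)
The plan is to treat the two parts separately, getting (i) for free from the QCQP-1 result and establishing (ii) by a recession-cone analysis that upgrades extreme-point exactness to convex-hull exactness in the unbounded regime. For Part (i), the two-sided GTRS is literally QCQP-1 with $m=\tilde m=2$ (the single two-sided quadratic LMI together with the homogenizing constraint $X_{11}=1$), so extreme point exactness is immediate from \autoref{cor:qp1}; I would stress that this needs neither the Slater condition nor finiteness of $b_1^l,b_1^u$.

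For Part (ii), since $\clconv(\C)\subseteq\C_{\rel}$ always holds and, by the representation theorem $18.5$ in \cite{rockafellar2015convex}, the line-free closed convex set $\C_{\rel}$ equals the closed convex hull of its extreme points together with its extreme directions, it suffices to show (a) every extreme point of $\C_{\rel}$ lies in $\C$, and (b) every extreme direction of $\C_{\rel}$ is a recession direction of $\clconv(\C)$. Part (a) is exactly Part (i). For (b) I would first compute $\reccone(\C_{\rel})$: because both bounds are finite, the recession cone of the two-sided LMI is $\{\bm D:\langle\bm A_1,\bm D\rangle=0\}$, and combined with $D_{11}=0$ (forced by $\bm D\succeq 0$ and the recession of $X_{11}=1$) this gives $\reccone(\C_{\rel})\cong\{\bm D'\in\S_+^n:\langle\bm Q_1,\bm D'\rangle=0\}$. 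A short facial perturbation argument (perturb a rank-$r$ generator $\bm P\bm P^\top$ by $\bm P(\bm I_r\pm\epsilon\bm\Delta)\bm P^\top$ with $\langle\bm P^\top\bm Q_1\bm P,\bm\Delta\rangle=0$ and $\bm\Delta\not\propto\bm I_r$, which exists once $r\ge 2$) shows every extreme ray of this cone is rank-one, i.e.\ of the form $\bm d\bm d^\top$ with $\bm d^\top\bm Q_1\bm d=0$, so each extreme direction of $\C_{\rel}$ is $\bm D=\begin{pmatrix}0&\bm 0^\top\\\bm 0&\bm d\bm d^\top\end{pmatrix}$.

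The crux, and the step I expect to be hardest, is showing each such $\bm D$ is a recession direction of $\clconv(\C)$, and this is exactly where $\bm Q_1\neq\bm 0$ is used. The key observation is that lifting squares the sign, so it is enough to produce feasible points $\bm x_t$ in $S:=\{\bm x: b_1^l\le \bm x^\top\bm Q_1\bm x+\bm q_1^\top\bm x\le b_1^u\}$ with $\|\bm x_t\|_2\to\infty$ and $\bm x_t/\|\bm x_t\|_2\to\pm\bm d/\|\bm d\|_2$; their lifts stay in $\C$ and converge in direction to $\bm D$, forcing $\bm D\in\reccone(\clconv(\C))$ since $\clconv(\C)$ is closed. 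To build such a sequence I would set $\bm x_t=t\bm d+s(t)\bm e$ for a fixed $\bm e$ and $s(t)=o(t)$; because $\bm d^\top\bm Q_1\bm d=0$ the value $g(\bm x_t)$ is affine in $t$ along $\bm d$, and I choose $\bm e$ (either with $\bm Q_1\bm d\neq\bm 0$, producing a cross term $\propto ts$, or with $\bm e^\top\bm Q_1\bm e\neq 0$, producing a term $\propto s^2$, at least one of which is available since $\bm Q_1\neq\bm 0$) together with $s(t)$ so that this quadratic/cross contribution exactly offsets the linear-in-$t$ drift and pins $g(\bm x_t)$ at a fixed value inside $[b_1^l,b_1^u]$. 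The finiteness of both bounds is what makes ``pin at an interior value'' meaningful, and the freedom to replace $\bm d$ by $-\bm d$ (which leaves $\bm D$ unchanged) lets me match the sign of $-\bm q_1^\top\bm d$ to the available sign of $\bm e^\top\bm Q_1\bm e$. Once (b) holds for every extreme ray, I conclude $\reccone(\C_{\rel})\subseteq\reccone(\clconv(\C))$ (a closed pointed cone is the conic hull of its extreme rays), hence with (a) that $\C_{\rel}=\clconv(\C)$. I would close by noting that this construction breaks precisely when $\bm Q_1=\bm 0$, since then there is no quadratic term to curve the sequence back into $S$, recovering the failure observed in \autoref{eg2}.
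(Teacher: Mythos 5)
Your proof is correct and shares the same skeleton as the paper's: Part (i) is obtained identically from \autoref{cor:qp1}, and Part (ii) reduces, via the representation theorem, to showing that (a) the extreme points of $\C_{\rel}$ lie in $\C$ and (b) every extreme recession direction of $\C_{\rel}$ (necessarily rank-one in the lower-right block, i.e.\ generated by $\bm d\bm d^{\top}$ with $\bm d^{\top}\bm Q_1\bm d=0$) is a recession direction of $\clconv(\C)$. The genuine difference is in how (b) is established: the paper simply invokes Proposition 3 of \cite{joyce2021convex}, whereas you prove it from scratch by building feasible points $\bm x_t=t\bm d+s(t)\bm e$ whose normalized lifts converge to the target direction, using either the cross term (when $\bm Q_1\bm d\neq\bm 0$) or a transverse quadratic term (when some $\bm e^{\top}\bm Q_1\bm e\neq 0$) to cancel the linear drift $t\,\bm q_1^{\top}\bm d$ and pin the constraint value inside $[b_1^l,b_1^u]$ — precisely where $\bm Q_1\neq\bm 0$ and the finiteness of both bounds enter. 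This is self-contained and is in the spirit of the paper's own appendix proof of \autoref{cor:gtrs}, which also constructs explicit rank-one escaping sequences but only for the one-sided case (its perturbation trick does not pin the value in a bounded interval, which is presumably why the paper cites \cite{joyce2021convex} for the two-sided version). Two small points to tighten: in the degenerate subcase $\bm Q_1\bm d=\bm 0$ and $\bm q_1^{\top}\bm d=0$, there is no drift to cancel and a fixed $\bm e$ may never make $s\bm e$ feasible, so you should anchor the sequence at an arbitrary feasible point $\hat{\bm x}$ (which exists by Part (i) whenever $\C_{\rel}\neq\emptyset$), taking $\bm x_t=\hat{\bm x}+t\bm d+s(t)\bm e$; and your facial-perturbation argument that the extreme rays of $\{\bm D'\in\S_+^{n}:\langle\bm Q_1,\bm D'\rangle=0\}$ are rank-one, while correct, duplicates what the paper obtains directly from Part (ii) of \autoref{them:egqcqp} applied to this single one-dimensional LMI.
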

		\begin{proof}
			Part (i) can be obtained by simply following the proof of QCQP-1 in \autoref{cor:qp1}. 
			
			Next, let us prove Part (ii).  First, the corresponding sets $\C$ and $\C_{\rel}$  for two-sided GTRS are
			\begin{align*}
				&\C:=\left\{\bm X\in \S_+^{n+1}: \rank(\bm X)\le 1, X_{11}=1,  b_1^l \le \langle\bm A_1, \bm X\rangle \le b_1^u \right\}, \\ 
				&\C_{\rel}  :=\left\{\bm X\in \S_+^{n+1}:  X_{11}=1, b_1^l \le \langle\bm A_1, \bm X\rangle \le b_1^u \right\},
			\end{align*}
			where $\bm A_1 = \begin{pmatrix}
				0 & \bm q_1^{\top}/2\\
				\bm q_1/2 & \bm Q_1
			\end{pmatrix}$. 
			Following the analysis in \autoref{cor:gtrs}, the recession cone of set $\C_{\rel}$ is equal to
			\begin{align*}
				\reccone(\C_{\rel}) = \clconv\left( \left\{\bm X\in \S_+^{n+1}: \rank(\bm X)\le1, \langle \bm A_1, \bm X\rangle = 0,  X_{11}=0 \right\}\right).
			\end{align*}
			For any rank-one direction $\bm D := \begin{pmatrix}0 & \bm 0^{\top}\\\bm 0 & \bm y \bm y^{\top}\end{pmatrix}$ in $\reccone(\C_{\rel})$, according to proposition 3 in \cite{joyce2021convex}, $\bm{D}$ is also a direction in $\clconv(C)$ when $\bm Q_1\neq \bm 0$. Thus, $\reccone(\C_{\rel}) =	\reccone(\clconv(C))$, which, together with the extreme point exactness and the representation theorem in \cite{rockafellar2015convex}, leads to the desired conclusion. 
			\qed
		\end{proof}

		\noindent \textbf{Homogeneous QCQP with Two Independent Constraints (HQP-2).} Another  special case of the QCQP  is a homogeneous QCQP with  two independent constraints without linear terms, denoted as HQP-2, which has  witnessed applications in robust receive beamforming \citep{khabbazibasmenj2010robust, huang2010dual} and signal processing \citep{huang2014randomized}. The HQP-2 admits the following form
		\begin{equation*}
			\text{(HQP-2)} \quad 
			\min_{\bm x \in \Re^n} \left\{ \bm x^{\top} \bm Q_0 \bm x : b_1^l \le \bm x^{\top} \bm Q_1 \bm x  \le b_1^u,   b_2^l \le \bm x^{\top} \bm Q_2 \bm x \le b_2^u\right\}.
		\end{equation*}
		The HQP-2 is different from the general QCQP-1 since its equivalent rank-one constrained formulation builds on the size-$n$ positive semidefinite set instead of $n+1$ without the auxiliary constraint $X_{11}=1$
		\begin{align*}
			\min_{\bm X \in \X} \left\{\langle\bm A, \bm X\rangle:  b_1^l \le 	\langle \bm A_1, \bm X\rangle \le  b_1^u,   b_2^l \le 	\langle \bm A_2, \bm X\rangle  \le   b_2^u  \right\} ,\ \  \X:=\{\bm X\in \S_+^{n}: \rank(\bm X)\le 1\},
		\end{align*}
		where ${\bm A}= \bm Q_0$, ${\bm A}_1= \bm Q_1$, and ${\bm A}_2= \bm Q_2$.
		
		Part (i) in \autoref{them:egqcqp} directly implies the extreme point exactness of HQP-2. In fact, this result was first proved by \cite{polyak1998convexity}, and years later, \cite{ye2003new} used the matrix rank-one decomposition procedure to reprove it. It is worth mentioning that both proofs rely on the strong duality assumption. In contrast, our analysis manages to relax
		the strong duality assumption. Note that we may not derive the convex hull exactness of HQP-2 since set $\C_{\rel}$ can be unbounded, and its recession cone can be different from those in $\clconv(\C)$ (see \autoref{eg2} with $n=2$ for an illustration).
		
		\begin{corollary}[HQP-2]\label{cor:hqp}
			For HQP-2, its corresponding DWR  admits the extreme point exactness.
		\end{corollary}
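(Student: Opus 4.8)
The plan is to read off HQP-2 as a QCQP instance whose domain set is the rank-one positive semidefinite cone, carved by exactly $m=2$ two-sided LMIs, so that the facial machinery already built for QCQP applies verbatim. Concretely, in the equivalent matrix formulation we have $\X:=\{\bm X\in \S_+^{n}:\rank(\bm X)\le 1\}$ together with the two two-sided LMIs induced by $\bm A_1=\bm Q_1$ and $\bm A_2=\bm Q_2$; since there are only two technology matrices, $\tilde{m}\le 2$. The goal $\ext(\C_{\rel})\subseteq\C$ is then an instance of \autoref{them:egqcqp}(i), and the task reduces to verifying that the Inclusive Face condition of \autoref{them:ext} holds at dimension $\tilde{m}=2$.

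First I would invoke \autoref{lem:psd} with $k=1$. This gives $\clconv(\X)=\S_+^{n}$, and because $\tfrac{k(k+3)}{2}=2$ when $k=1$, it guarantees that every face of $\clconv(\X)$ of dimension at most two lies in $\X$, i.e.\ $F\subseteq\X$ for all $F\in\F^{2}(\clconv(\X))$. This is exactly Part (a) of \autoref{them:ext} specialized to $\tilde{m}=2$. Since HQP-2 has precisely two LMIs, so that $\tilde{m}\le 2$, the requirement $F\subseteq\X$ for all $F\in\F^{\tilde{m}}(\clconv(\X))$ follows a fortiori. Applying the equivalence in \autoref{them:ext} then yields the extreme point exactness $\ext(\C_{\rel})\subseteq\C$. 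Notably, no strong duality or Slater assumption enters anywhere, which is the improvement over the earlier proofs of this fact.

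The only point needing care---and it is the mildest possible obstacle---is that \autoref{them:egqcqp} was phrased for $\X\subseteq\S_+^{n+1}$ with the embedded constraint $X_{11}=1$, whereas HQP-2 lives in $\S_+^{n}$ with no affine constraint and is genuinely homogeneous. However, the facial bound in \autoref{lem:psd} depends only on the rank parameter $k$ and not on the ambient matrix size, and \autoref{them:ext} is stated for an arbitrary nonempty closed domain set with line-free closed convex hull; the rank-one cone $\S_+^{n}$ satisfies both hypotheses. Hence the argument transfers without modification. I expect to flag explicitly that boundedness of $\C_{\rel}$ is \emph{not} assumed here---indeed $\C_{\rel}$ may be unbounded and its recession cone may differ from that of $\clconv(\C)$ (cf.\ \autoref{eg2} with $n=2$)---which is precisely why only extreme point exactness, and not convex hull exactness, is claimed.
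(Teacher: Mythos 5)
Your proposal is correct and follows essentially the same route as the paper, which derives \autoref{cor:hqp} directly from Part (i) of \autoref{them:egqcqp} (itself a combination of \autoref{lem:psd} with $k=1$ and \autoref{them:ext} applied with $\tilde m\le 2$). Your explicit remarks that the ambient size ($\S_+^{n}$ versus $\S_+^{n+1}$, no $X_{11}=1$ constraint) is immaterial and that only extreme point exactness (not convex hull exactness) can be claimed match the paper's own discussion.
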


					\section{An Optimality View of DWR Exactness: Simultaneously Necessary and Sufficient Conditions for Objective Exactness} \label{sec:obj}
					The objective  exactness is another common way to show whether DWR \eqref{eq_rel_rank} matches the original
					RCOP \eqref{eq_rank} regarding the optimal values (i.e., whether $\V_{\opt} = \V_{\rel}$). The main difference of
					objective exactness from the other two exactness notions is that it depends on a given linear objective function in RCOP \eqref{eq_rank}. To illustrate this difference, let us review \autoref{eg1}, where two optimal values $\V_{\opt} = \V_{\rel}=-0.5$ are equal using the linear objective function $-X_{11}$, while neither extreme point exactness nor convex hull exactness holds. 
					Therefore,  this section investigates the objective exactness of DWR \eqref{eq_rel_rank} for any $m$ LMIs of dimension $\tilde{m}$  concerning four favorable families of linear objective functions in RCOP \eqref{eq_rank} specified as follows and derives their simultaneously necessary and sufficient conditions.\par
					\noindent \textbf{Setting I.} Any linear objective function;\par
					\noindent \textbf{Setting II.} Any linear objective function such that $\V_{\opt} > -\infty$;\par
					\noindent \textbf{Setting III.} Any linear objective function such that $\V_{\opt} > -\infty$, the set  of optimal solutions is bounded, and the binding LMIs are of dimension $\underline{m}$; \par
					\noindent \textbf{Setting IV.} Any linear objective function such that the relaxed Slater condition  holds, $\V_{\opt} > -\infty$, the set  of optimal solutions is bounded, and there are $\underline{m}^* $  nonzero optimal Lagrangian multipliers corresponding to the optimal DWR.\par
					Note that  $\underline{m}$ and $\underline{m}^*$ are used throughout this section to indicate the number of linearly independent matrices from binding LMIs of DWR \eqref{eq_rel_rank} and the smallest number of nonzero  Lagrangian multipliers among all the optimal DWR dual solutions. 
					For comparison purposes, the four key notations of the LMIs in DWR \eqref{eq_rel_rank} are listed in  \autoref{tab:my_label}. Please note that we will show $\underline{m}^*\le \underline{m} \le \tilde{m} \le m$.
					
					\begin{table}[ht]
						\centering
						\setlength{\tabcolsep}{3pt}\renewcommand{\arraystretch}{1.2}
						\begin{tabular}{|c|l|}
							\hline
							Notation   & 	\multicolumn{1}{c|}{Definition} \\
							\hline
							$m$   & the number of  LMIs  \\
							\hline
							$\tilde m$ & dimension of technology matrices in all  $m$  LMIs \\
							\hline
							$\underline{m}$ & dimension of technology matrices in binding  LMIs \\
							\hline
							$\underline{m}^*$ & the smallest number of nonzero optimal Lagrangian multipliers \\
							\hline
						\end{tabular}
						\caption{Notations about LMIs in DWR  \eqref{eq_rel_rank}.}
						\label{tab:my_label}
						\vspace{-1em}
					\end{table}
					As illustrated in \autoref{fig:relation},   the objective exactness under settings (I) and (II) are equivalent to the convex hull  exactness and the extreme point exactness, respectively. Thus, the results in the previous section can be directly applied to the simultaneously necessary and sufficient conditions for objective exactness.
					The remaining two settings (III) and (IV) focus on two special yet intriguing RCOP families by analyzing primal and dual perspectives, respectively. 
					Please note that although our proposed conditions for objective exactness under settings (III) and (IV) require assumptions of technology matrices in the  LMIs of DWR \eqref{eq_rel_rank}, they can cover and extend the existing results in the celebrated papers \citep{ye2003new,ben2014hidden}.
					
					\subsection{Objective Exactness Under  Settings (I) and (II): Simultaneously Necessary and Sufficient Conditions}\label{sec:any}
					This subsection presents simultaneously  necessary and sufficient conditions for objective exactness of DWR \eqref{eq_rel_rank} under settings (I) and (II) using  their equivalence to convex hull exactness and extreme point exactness, respectively.
					
					We extend \autoref{them:ubdconv}  to provide a sufficient condition for the objective exactness under setting (I).
					\begin{theorem}\label{them:undobj}
						Given a nonempty closed domain set $\X$ with its closed convex hull $\clconv(\X)$ being line-free, 
						the following statement (a) implies statement (b):
						\begin{enumerate}[(a)]
							\item \textbf{Inclusive Face:} The (Minkowski) sum of any no larger than $\tilde{m}$-dimensional face of set $\clconv(\X)$ and any no larger than $(\tilde{m}+1)$-dimensional face of recession cone $\reccone(\clconv(\X))$  is contained in the domain set $\X$, i.e.,  $F+ \hat{F} \subseteq \X$ for all $F\in \F^{\tilde{m}}(\clconv(\X))$ and $\hat{F}\in \F^{\tilde{m}+1}(\reccone(\clconv(\X)))$;
							\item \textbf{Objective Exactness:} The DWR  \eqref{eq_rel_rank} has the same optimal value as RCOP \eqref{eq_rank} (i.e., $\V_{\opt}=\V_{\rel}$) for any linear objective function and any $m$ LMIs of dimension $\tilde{m}$.
						\end{enumerate}
					\end{theorem}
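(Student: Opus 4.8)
The plan is to derive this as an immediate corollary of the convex hull exactness result in Theorem \ref{them:ubdconv}, together with the elementary observation that a linear objective attains the same optimal value over a set and over its closed convex hull. The crucial point to notice first is that hypothesis (a) here is \emph{verbatim} the hypothesis (a) of Theorem \ref{them:ubdconv}: the same Minkowski-sum inclusion $F + \hat F \subseteq \X$ for all $F \in \F^{\tilde m}(\clconv(\X))$ and $\hat F \in \F^{\tilde m + 1}(\reccone(\clconv(\X)))$. Hence I would invoke Theorem \ref{them:ubdconv} directly to conclude that condition (a) already forces convex hull exactness, i.e.\ $\C_{\rel} = \clconv(\C)$ for any $m$ LMIs of dimension $\tilde m$. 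This reduces the entire task to proving the generic implication ``convex hull exactness $\Rightarrow$ objective exactness,'' which is exactly the arrow depicted in Figure \ref{fig:relation}, so no new geometric machinery is required beyond what is already in hand.

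Second, I would establish that generic implication. Fix any linear objective $\langle \bm A_0, \cdot\rangle$ and recall $\C \subseteq \clconv(\C) = \C_{\rel}$, which gives $\V_{\rel} \le \V_{\opt}$ at once. For the reverse inequality, set $c := \V_{\opt} = \inf_{\bm X \in \C} \langle \bm A_0, \bm X\rangle$. For any finite convex combination $\bm X = \sum_j \lambda_j \bm X_j$ of points $\bm X_j \in \C$, linearity yields $\langle \bm A_0, \bm X\rangle = \sum_j \lambda_j \langle \bm A_0, \bm X_j\rangle \ge \sum_j \lambda_j c = c$, so the infimum of the objective over $\conv(\C)$ is also $c$. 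Passing to the closure costs nothing because the linear map $\langle \bm A_0, \cdot\rangle$ is continuous: every point of $\clconv(\C) = \cl(\conv(\C))$ is a limit of points in $\conv(\C)$, whose objective values are all bounded below by $c$, so $\inf_{\clconv(\C)} \langle \bm A_0, \cdot\rangle = c$. Chaining these, $\V_{\rel} = \inf_{\C_{\rel}} \langle \bm A_0, \cdot\rangle = \inf_{\clconv(\C)} \langle \bm A_0, \cdot\rangle = c = \V_{\opt}$, which is precisely objective exactness.

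Since the argument is a short corollary, there is no substantial obstacle; the only point deserving care is the passage from $\conv(\C)$ to its topological closure when $\C_{\rel}$ is unbounded. One must verify that taking the closure does not strictly lower the infimum of the objective, and I would handle this through the continuity argument above rather than through any compactness hypothesis, so that the conclusion remains valid for unbounded feasible sets. I would also note that the chain of equalities is interpreted in the extended reals, so the degenerate case $\V_{\rel} = -\infty$ is covered automatically: then $\V_{\opt} \ge \V_{\rel} = -\infty$ combined with $\V_{\rel} \le \V_{\opt}$ forces $\V_{\opt} = -\infty = \V_{\rel}$ as well. This confirms statement (b) under setting (I) for every linear objective and every admissible LMI configuration.
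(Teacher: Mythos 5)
Your proposal is correct and follows essentially the same route as the paper: the paper's proof is a one-line appeal to the equivalence between convex hull exactness and objective exactness for all linear objectives, citing Theorem \ref{them:ubdconv}, and you do exactly this while additionally writing out the elementary argument that a linear functional has the same infimum over $\C$, $\conv(\C)$, and $\clconv(\C)$. The only cosmetic blemish is the final sentence on the degenerate case (the two inequalities you combine there are the same inequality and force nothing by themselves), but your earlier chain of equalities in the extended reals already covers that case, so nothing is missing.
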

					\begin{proof}
						The objective exactness for any linear objective function is equivalent to the convex hull exactness, and thus, the proof follows from \autoref{them:ubdconv}. \qed
					\end{proof}
					
					Besides, following the analysis in \autoref{them:qcqpch}, the sufficient condition in \autoref{them:undobj} becomes necessary when the domain set $\X$ is 
					closed and conic. 
					\begin{theorem}\label{them:qcqpobj}
						Given a nonempty closed conic domain set $\X$, i.e., for any $\alpha\geq 0$ and $\bm X\in \X$, we have $\alpha \bm X \in \X$, with its closed convex hull $\clconv(\X)$ being line-free the followings are equivalent:
						\begin{enumerate}[(a)]
							\item \textbf{Inclusive Face:} Any no larger than $(\tilde{m}+1)$-dimensional face of set $\clconv(\X)$ is contained in the domain set $\X$, i.e.,  $F \subseteq \X$ for all $F\in \F^{\tilde{m}+1}(\clconv(\X))$; 
							\item \textbf{Objective Exactness:} The DWR  \eqref{eq_rel_rank} has the same optimal value as problem \eqref{eq_rank} (i.e., $\V_{\opt}=\V_{\rel}$) for any linear objective function and any $m$ LMIs of dimension $\tilde{m}$.
						\end{enumerate}
					\end{theorem}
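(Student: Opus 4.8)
The plan is to obtain the equivalence (a) $\iff$ (b) by chaining two equivalences already available in the paper, thereby reducing this statement to its convex-hull counterpart \autoref{them:qcqpch}. The crucial observation is that the hypothesis here—a closed conic $\X$ with line-free $\clconv(\X)$—is identical to that of \autoref{them:qcqpch}, and statement (a) is verbatim the \textbf{Inclusive Face} condition appearing there. Hence, by \autoref{them:qcqpch}, statement (a) is equivalent to convex hull exactness, i.e.\ $\C_{\rel}=\clconv(\C)$ for any $m$ LMIs of dimension $\tilde{m}$.

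First I would record the equivalence between convex hull exactness and objective exactness for an arbitrary linear objective, which is precisely the top edge of \autoref{fig:relation} and is already invoked in the proof of \autoref{them:undobj}. The easy direction is that $\C_{\rel}=\clconv(\C)$ forces $\V_{\rel}=\inf_{\C_{\rel}}\langle\bm A_0,\bm X\rangle=\inf_{\clconv(\C)}\langle\bm A_0,\bm X\rangle=\inf_{\C}\langle\bm A_0,\bm X\rangle=\V_{\opt}$, since a linear functional attains the same infimum over a set, over its convex hull, and over the closure of that hull. For the converse, suppose convex hull exactness fails; because $\clconv(\C)\subseteq\C_{\rel}$ always holds, I would pick a point $\bm X^{*}\in\C_{\rel}\setminus\clconv(\C)$ and strictly separate it from the closed convex set $\clconv(\C)$, obtaining a matrix $\bm A_0$ and a scalar $\gamma$ with $\langle\bm A_0,\bm X^{*}\rangle<\gamma\le\langle\bm A_0,\bm Y\rangle$ for all $\bm Y\in\clconv(\C)$. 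Then $\V_{\rel}\le\langle\bm A_0,\bm X^{*}\rangle<\gamma\le\V_{\opt}$, so objective exactness fails for this objective.

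The theorem then follows by combining: (a) $\iff$ convex hull exactness (\autoref{them:qcqpch}) $\iff$ (b) (the equivalence just recorded). Framed differently, sufficiency (a) $\Rightarrow$ (b) is already supplied by \autoref{them:undobj}, since (a) is exactly its Inclusive Face hypothesis; the genuinely new content is the necessity (b) $\Rightarrow$ (a), which relies on the necessity half of \autoref{them:qcqpch}. That half is available precisely because $\X$ is conic, so that $\reccone(\clconv(\X))=\clconv(\X)$ and the Minkowski-sum condition of \autoref{them:undobj} collapses to the inclusion of $(\tilde{m}+1)$-dimensional faces stated in (a).

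The only delicate point I anticipate is making the separation step robust when $\V_{\rel}=-\infty$ is permitted, since Setting (I) does not assume a finite value. This is handled automatically: strictly separating a point from a \emph{nonempty} closed convex set yields a hyperplane bounded below on $\clconv(\C)$, so the separating objective $\bm A_0$ satisfies $\V_{\opt}\ge\gamma>-\infty$ irrespective of unboundedness, and the contradiction $\V_{\rel}<\V_{\opt}$ still goes through. The degenerate case $\C=\emptyset$ (whence $\clconv(\C)=\emptyset$ and $\V_{\opt}=+\infty$) I would dispatch separately by noting that objective exactness then forces $\C_{\rel}=\emptyset=\clconv(\C)$, so both equivalences remain intact.
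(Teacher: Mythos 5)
Your proposal is correct and follows essentially the same route the paper takes: the paper states this theorem as an immediate consequence of \autoref{them:qcqpch} combined with the equivalence (asserted in \autoref{fig:relation} and used in the proof of \autoref{them:undobj}) between convex hull exactness and objective exactness for every linear objective. Your separation argument merely makes explicit the direction of that equivalence which the paper leaves implicit, so there is no substantive difference in approach.
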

					
					
					Let us now consider the objective exactness of the DWR
					with finite optimal value, i.e., $\V_{\rel} >-\infty$.
					In this situation, the objective exactness is equivalent to the extreme point exactness. Thus, we readily obtain the following simultaneously necessary and sufficient condition for objective exactness. 
					\begin{restatable}{theorem}{themexactobj}\label{them:exactobj} 
						Given a nonempty closed domain set $\X$ with its closed convex hull $\clconv(\X)$ being line-free, the followings  are equivalent.
						\begin{enumerate}[(a)]
							\item \textbf{Inclusive Face:} Any no larger than $\tilde{m}$-dimensional face of set $\clconv(\X)$ is contained in the domain set $\X$, i.e.,  $F \subseteq \X$ for all $F\in \F^{\tilde{m}}(\clconv(\X))$; 
							\item \textbf{Objective Exactness:} The DWR  \eqref{eq_rel_rank} has the same optimal value as problem \eqref{eq_rank} (i.e., $\V_{\opt}=\V_{\rel}$) for any linear objective function such that $\V_{\rel}>-\infty$ and any $m$ LMIs of dimension $\tilde{m}$.
						\end{enumerate}
					\end{restatable}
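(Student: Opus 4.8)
The plan is to establish \autoref{them:exactobj} by reducing it to the equivalence between \emph{extreme point exactness} and objective exactness under the finite-value restriction $\V_{\rel}>-\infty$ (Setting II), and then invoking \autoref{them:ext} to connect extreme point exactness with the inclusive-face condition (a). Since \autoref{them:ext} already gives (a) $\iff$ $\ext(\C_{\rel})\subseteq\C$ for every $\tilde m$-dimensional LMI system, it suffices to prove that, for a fixed such system, extreme point exactness is equivalent to objective exactness whenever $\V_{\rel}>-\infty$. The inequality $\V_{\rel}\le\V_{\opt}$ is automatic from $\clconv(\C)\subseteq\C_{\rel}$, so in both directions the work is to control the reverse inequality.

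For the direction $\ext(\C_{\rel})\subseteq\C \Rightarrow \V_{\opt}=\V_{\rel}$, I would fix an objective $\bm A_0$ with $\V_{\rel}>-\infty$. Because $\clconv(\X)$ is line-free, $\C_{\rel}$ is a closed, convex, line-free set, and finiteness of $\V_{\rel}$ forces $\langle\bm A_0,\bm D\rangle\ge 0$ for every $\bm D\in\reccone(\C_{\rel})$. Using the representation theorem 18.5 in \cite{rockafellar2015convex}, every point of $\C_{\rel}$ is a convex combination of extreme points plus a recession direction, so $\V_{\rel}=\inf_{\hat{\bm X}\in\ext(\C_{\rel})}\langle\bm A_0,\hat{\bm X}\rangle$. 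Each such $\hat{\bm X}$ lies in $\C$ by hypothesis, hence is feasible for RCOP, giving $\langle\bm A_0,\hat{\bm X}\rangle\ge\V_{\opt}$; taking the infimum yields $\V_{\rel}\ge\V_{\opt}$, and therefore $\V_{\opt}=\V_{\rel}$. This direction needs no attainment of the minimum, only that the objective is bounded below on the recession cone.

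The converse, $\V_{\opt}=\V_{\rel}$ (for all such objectives) $\Rightarrow \ext(\C_{\rel})\subseteq\C$, I would argue by contraposition, and it is where the main difficulty lies. Assuming extreme point exactness fails, \autoref{cor_lem:genext} and the singleton construction in the proof of \autoref{them:ext} produce a $\tilde m$-dimensional (equality) LMI system $\H$, a face $F\in\F^{\tilde m}(\clconv(\X))$ with $F\not\subseteq\X$, and a point $\hat{\bm X}\in(F\setminus\X)\cap\ext(\C_{\rel})$ such that $F\cap\H=\{\hat{\bm X}\}$. The goal is to exhibit one linear objective with $\V_{\rel}>-\infty$ but $\V_{\opt}>\V_{\rel}$: I would take $\bm A_0$ to be an outer normal of a supporting hyperplane of $\clconv(\X)$ exposing $F$, combined with the equality constraints of $\H$, so that $\hat{\bm X}$ is the unique minimizer of $\langle\bm A_0,\cdot\rangle$ over $\C_{\rel}$. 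Since $\hat{\bm X}\notin\C$ while every other feasible point has strictly larger objective value, this separates $\V_{\rel}$ from $\V_{\opt}$, contradicting objective exactness.

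The step I expect to be the main obstacle is guaranteeing a genuine strict gap $\V_{\opt}>\V_{\rel}$ in the converse: an extreme point of a closed convex set need not be exposed, and when $\C$ is unbounded the infimum over $\C$ could in principle still approach $\V_{\rel}$ without being attained. I would address this by exploiting that the constructed $\hat{\bm X}$ is the unique intersection of the face $F$ with the affine slice $\H$, which pins the minimizer to a single point of $\clconv(\X)\cap\H$ and lets me strengthen the supporting functional to a strictly exposing one (perturbing the normal within the normal cone of $\clconv(\X)$ at $\hat{\bm X}$, or invoking Straszewicz-type density of exposed points), thereby forcing all points of $\C\subseteq\C_{\rel}\setminus\{\hat{\bm X}\}$ to lie strictly above $\V_{\rel}$.
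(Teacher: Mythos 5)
Your proposal is correct and follows essentially the same route as the paper: reduce via \autoref{them:ext} to the equivalence between extreme point exactness and objective exactness under $\V_{\rel}>-\infty$, prove the forward direction through the representation theorem and nonnegativity of $\langle\bm A_0,\cdot\rangle$ on the recession cone, and handle the converse with exposing functionals plus Straszewicz's theorem (the paper argues directly over exposed and non-exposed extreme points, while you phrase it as a contrapositive, but the tools and resolution of the attainment issue are the same).
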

					\begin{proof}
						According to \autoref{them:ext}, we have that statement $(a) \iff$ extreme point exactness. Thus, it is equivalent to prove that extreme point exactness $ \iff $ objective exactness for any linear objective function such that $\V_{\rel}>-\infty$. 
						\begin{enumerate}[(i)]
							\item $\ext(\C_{\rel}) \subseteq \C \Longrightarrow \V_{\opt}=\V_{\rel}>-\infty$. For any linear objective function $ \langle\bm A_0, \bm X\rangle $, since $\V_{\opt}\geq \V_{\rel}>-\infty$, we have $ \langle\bm A_0, \bm D\rangle \geq 0$ for any direction $\bm D$ in set $\C_{\rel}$ and so does set $\C$. In addition, both sets $\C_{\rel}$  and $\clconv(\C)$ contain no line. 
							Note that according to the theorem 18.5 in \cite{rockafellar2015convex}, a closed convex line-free set can be represented as sum of a convex combination of extreme points and a conic combination of extreme directions. 
							Thus, it suffices to rewrite RCOP \eqref{eq_rank} and its DWR \eqref{eq_rel_rank} as $\V_{\opt}=\min_{\bm{X}}\{ \langle\bm A_0, \bm X\rangle :\bm{X}\in \ext(\clconv(\C))\}$ and $\V_{\rel}=\min_{\bm{X}}\{ \langle\bm A_0, \bm X\rangle :\bm{X}\in \ext(\C_{\rel})\}$. Given $\ext(\C_{\rel})\subseteq \C$,  we must have $\V_{\opt}=\V_{\rel}>-\infty$.
							
							
							\item $\V_{\opt}=\V_{\rel}>-\infty \Longrightarrow \ext(\C_{\rel}) \subseteq \C $. For any exposed extreme  point $\hat{\bm X} $ in set $\C_{\rel}$,  there exists a supporting hyperplane $\{\bm X \in \Q: \langle \hat{\bm A}_0, \bm X\rangle = \langle \hat{\bm A}_0, \hat{\bm X}\rangle= \V_{\rel} >-\infty\}$ of set $\C_{\rel}$ which only intersects set $\C_{\rel}$ at  $\hat{\bm X} $ and satisfies $\langle \hat{\bm A}_0, \bm X\rangle > \V_{\rel}$ for any  $\bm{X}\in \C_{\rel}$ and $\bm{X}\neq \hat{\bm X}$. Therefore, by setting the linear objective function $\langle \hat{\bm A}_0, \bm X\rangle$ in RCOP  \eqref{eq_rank}, $\hat{\bm X} $ is the unique optimal solution to DWR \eqref{eq_rel_rank}. Since $\V_{\opt}=\V_{\rel}>-\infty$ and $\C \subseteq \C_{\rel}$, we conclude that $\hat{\bm X} \in \C$.
							
							For a non-exposed extreme point $\hat{\bm X} $ in the closed convex set $\C_{\rel}$, according to Straszewicz's theorem in \cite{rockafellar2015convex}, there exists a sequence of exposed  points $\{\bm X_{\ell}\}_{\ell=1,2,\cdots, \infty}$ in set $\C_{\rel}$ such that $\lim\limits_{\ell \to \infty} \bm X_{\ell}= \hat{\bm X}$. Using the result above, we can show $\{\bm X_{\ell}\}_{\ell=1,2,\cdots, \infty} \subseteq \C$. Since set $\C$ is  closed,  we must have $\hat{\bm X}\in \C$ as it is the limit point of a sequence in set $\C$.  This proves $\ext(\C_{\rel}) \subseteq \C$.	\qed 
						\end{enumerate}
					\end{proof}

					Next we apply \autoref{them:exactobj} to show the objective exactness for a special QCQP problem, known as Simultaneously Diagonalizable QCQP (SD-QCQP), i.e., the matrices $\{\bm Q_0, \bm Q_1, \cdots,\bm Q_m \}$ in QCQP \eqref{qcqp} are simultaneously diagonalizable. \cite{ben2014hidden} first proved that the DWR of SD-QCQP with a one-sided quadratic constraint admitted the objective  exactness under the assumption that $\V_{\rel}>-\infty$, which is a special case of QCQP-1 in  \autoref{cor:qp1}. In fact, we can further generalize their result by showing that the objective exactness for SD-QCQP with  $\V_{\rel}>-\infty$ and a two-sided constraint  always holds.
					\begin{corollary}[SD-QCQP] \label{cor:sdqcqp}
						For the SD-QCQP with a two-sided quadratic constraint, its DWR  admits the objective exactness for any linear objective function such that $\V_{\rel}>-\infty$.
					\end{corollary}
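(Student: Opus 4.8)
The plan is to observe that SD-QCQP with a single two-sided quadratic constraint is merely a special instance of QCQP-1, and then to invoke the machinery of Subsection \ref{sec:app} together with \autoref{them:exactobj}. Casting the problem in the matrix form \eqref{eq_qcqp} yields the domain set $\X=\{\bm X\in \S_+^{n+1}:\rank(\bm X)\le 1\}$ together with the two LMIs $b_1^l\le \langle \bm A_1,\bm X\rangle\le b_1^u$ and $X_{11}=1$, so that $m=2$ and hence $\tilde m\le 2$; here $\bm A_1$ encodes the matrix $\bm Q_1$ and $\bm A_0$ encodes $\bm Q_0$. Note that $\clconv(\X)=\S_+^{n+1}$ is a pointed cone and therefore line-free, so the standing hypothesis of \autoref{them:exactobj} is satisfied.

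The crucial point is that none of the remaining steps uses the simultaneous-diagonalizability hypothesis. First I would apply \autoref{lem:psd} with $k=1$: every face of $\clconv(\X)=\S_+^{n+1}$ of dimension at most $\tfrac{k(k+3)}{2}=2$ is contained in $\X$, so the Inclusive Face condition \autoref{them:exactobj}(a) holds for $\tilde m=2$, and hence for any $\tilde m\le 2$. Equivalently, one may simply quote the extreme point exactness of QCQP-1 recorded in \autoref{cor:qp1}, which by the equivalence established in \autoref{them:ext} is exactly the Inclusive Face condition. Either way, \autoref{them:exactobj} then delivers $\V_{\opt}=\V_{\rel}$ for every linear objective function with $\V_{\rel}>-\infty$, which is the assertion.

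I do not expect a genuine technical obstacle: the corollary is essentially the statement that the problem fits the QCQP-1 template, for which the face-inclusion bound of \autoref{lem:psd} already secures all three exactness notions at $\tilde m\le 2$. The only subtlety is conceptual rather than computational—I would emphasize that, because the argument is valid for arbitrary $\bm Q_0$ and arbitrary two-sided data $(b_1^l,\bm Q_1,b_1^u)$, the simultaneous-diagonalizability assumption required by \cite{ben2014hidden} plays no role in our framework. This is precisely what lets us discard their one-sided restriction and obtain the two-sided case for free.
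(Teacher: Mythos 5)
Your proposal is correct and follows essentially the same route as the paper: reduce SD-QCQP to an instance of QCQP-1, verify that $\clconv(\X)=\S_+^{n+1}$ is line-free, and combine \autoref{cor:qp1} (equivalently, \autoref{lem:psd} with $k=1$ and \autoref{them:ext}) with \autoref{them:exactobj}. The only nitpick is the aside claiming that the face-inclusion bound "secures all three exactness notions at $\tilde m\le 2$" — convex hull exactness for the unbounded conic setting would need faces up to dimension $\tilde m+1=3$ by \autoref{them:qcqpch} — but this does not affect the objective-exactness argument you actually use.
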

					\begin{proof}
						As SD-QCQP is a special case of QCQP-1 and the corresponding set $\C_{\rel}$ is always line-free, the conclusion follows by \autoref{cor:qp1} and \autoref{them:exactobj}. \qed
						%
						%
					\end{proof}
					

						As mentioned before, the QCQP \eqref{eq_qcqp} has a domain set $\X=\{\bm X\in \S_+^n: \rank(\bm X)\le 1\}$ being  conic  closed, and its corresponding set $\clconv(\X)$ is always line-free; hence, our \autoref{them:qcqpobj} and \autoref{them:exactobj} can be directly applied, which provide, for the first time, the simultaneously necessary and sufficient conditions for the objective exactness under Setting I and Setting II, 
						respectively. 
						When applying to the QCQP, our proposed conditions only involve the domain set $\X$ and  are regardless of the linear objective function and LMIs. The objective exactness for the convex relaxations of a QCQP has been extensively studied in the literature (see, e.g., \cite{azuma2022exact, burer2020exact,kilincc2021exactness, sojoudi2014exactness}); however,  their conditions mainly focus on specific objective coefficients or technology matrices of a given QCQP.
						
						\subsection{Objective Exactness Under Setting (III): A Relaxed Simultaneously Necessary and Sufficient Condition based on Binding LMIs} \label{sec:bind}
						%
						Our proposed simultaneously necessary and sufficient condition in \autoref{them:exactobj} guarantees the objective exactness when DWR \eqref{eq_rel_rank} is equipped with any linear objective function such that $\V_{\rel}>-\infty$. Beyond that, when this proposed condition fails, the objective exactness  may still hold for the DWR with favorable objective functions
						(see, e.g., \autoref{eg:sdqcqp} below). This motivates us to study a relaxed necessary and sufficient condition for DWR objective  exactness  given $\underline m$-dimensional binding LMIs at optimality, which covers and extends the objective exactness of two major applications in  fair unsupervised learning: fair PCA and  fair SVD. 
						
						Throughout this subsection, for ease of the analysis, we  consider only one-sided LMIs for RCOP \eqref{eq_rank} in which, without loss of generality, we let $b_i^l=-\infty$ for each $i\in [m]$. In fact, any $i$-th LMI of RCOP \eqref{eq_rank} can be recast as two 
						one-sided  LMIs of  dimension one as shown below since matrices $\{\bm A_i, -\bm A_i\}$ have the dimension of one.
						\begin{align*}
							&b_i^l \le \langle \bm A_i, \bm X_i \rangle \le b_i^u  \ \ \Longleftrightarrow  \ \ \langle -\bm A_i, \bm X_i \rangle \le -b_i^l,  \langle \bm A_i,  \bm X_i \rangle \le b_i^u.
						\end{align*}

						We begin with an example illustrating why the  binding LMIs of the DWR is important for objective exactness in \autoref{them:exactobj}. 
						\begin{example} \label{eg:sdqcqp} \rm
							Using the same domain set $\X=\{\bm X\in \S_+^2: \rank(\bm X)\le 1, X_{12}=0\}$ and its closed convex hull $\clconv(\X)=\{\bm X\in \S_+^2: X_{12}=0\}$ as shown in \autoref{eg:ray}, we consider  $m=\tilde m=2$ LMIs $X_{11}\le X_{22}$ and $X_{22}\le1$  in RCOP \eqref{eq_rank}. Then the feasible sets defined in \eqref{eq_2sets} are
							\[ \C:=\{ \bm X\in \X:  X_{11}\le X_{22}, X_{22}\le 1  \},  \  \ \C_{\rel}:=\{ \bm X \in \conv(\X):  X_{11}\le X_{22}, X_{22}\le 1 \}.\]
							Note that sets $\X$ and $\clconv(\X)$ are plotted in Figure \ref{egrayx} and Figure \ref{egrayconv}, respectively, by projecting them onto a two-dimensional vector space over $(X_{11}, X_{22})$. 
							The corresponding sets $\clconv(\C)$ and $\C_{\rel}$ are presented in \autoref{fig:sdqcqp} below.
							
							Since only zero- and one-dimensional faces of $\conv(\X)$ belong to set $\X$ as mentioned in \autoref{eg:ray}, according to \autoref{them:exactobj},  the objective exactness may fail in this example since there are $\tilde{m}= 2$-dimensional LMIs. 
							Albeit powerful,  \autoref{them:exactobj} may not rule out the possibility of attaining the objective exactness in this example.  For instance, if we set the objective function to be $X_{22}$, then the  objective exactness holds since $\V_{\opt} = \V_{\rel}=0$ with the optimal solution at point $a_1$, where there is only one binding LMI at optimality. Thus, the binding LMIs at optimality may be sufficient for the objective exactness instead of using all the LMIs.
							\qedA
						\end{example}
						
						\begin{figure}[ht]
							\vskip -0.15in
							\centering
							\subfigure[$\clconv(\C)$ ]{
								{\includegraphics[width=0.35\columnwidth]{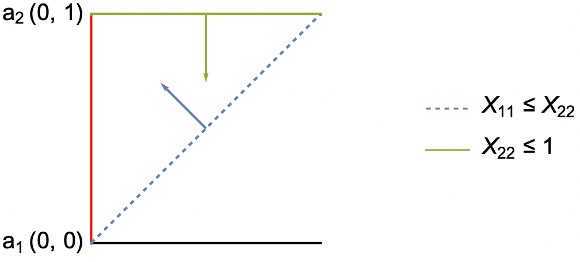}}	\label{sdc}}
							\hspace{6em}
							\subfigure[$\C_{\rel}$ ]{
								{\includegraphics[width=0.35\columnwidth]{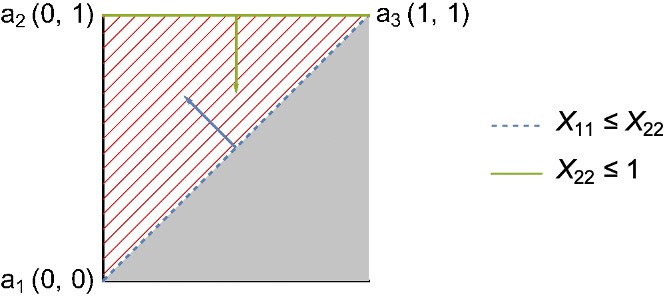}}	\label{sdcrel}}
							\caption{Illustration of Sets in \autoref{eg:sdqcqp}  and $\C_{\rel}\neq \clconv(\C)$.  }
							\label{fig:sdqcqp}
							\vskip -0.1in
						\end{figure}

						
						Motivated by \autoref{eg:sdqcqp}, we derive a relaxed necessary and sufficient condition for objective exactness  under the setting  that $\V_{\opt} >-\infty$ and there is an optimal solution to DWR  falling on the intersection of at most $\underline{m}$-dimensional binding LMIs. The relaxed condition is based on the vector projection, which is defined below.
						\begin{definition}[Vector Projection] \label{def:proj}
							For a nonzero vector $\bm x \in \Re^n$ and a set $ D \in \Re^n$, we let $\Proj_{D} (\bm x)$ denote the orthogonal projection of $\bm x$ onto $D$, called ``vector projection." Thus, $\Proj_{D} (\bm x)$ is parallel  to set $D$.
						\end{definition}
						The vector projection can be straightforwardly applied to our matrix set by vectorizing a matrix.
						
						\begin{theorem}\label{them:bind}
							Given a nonempty closed domain set $\X$ with its closed convex hull $\clconv(\X)$ being line-free and $b_i^{l}=-\infty$ for any $i\in [m]$ in RCOP \eqref{eq_rank}, the followings are equivalent:
							\begin{enumerate}[(a)]
								\item \textbf{Inclusive Face:} Any no larger than $\underline{m}$-dimensional face of set $\clconv(\X)$ is contained in the domain set $\X$, i.e.,  $F \subseteq \X$ for all $F\in \F^{\underline{m}}(\clconv(\X))$; 
								\item \textbf{Objective Exactness:} The DWR  \eqref{eq_rel_rank} has the same optimal value as RCOP \eqref{eq_rank} (i.e., $\V_{\opt}=\V_{\rel}$) for any linear objective function and any $m$ LMIs  such that (i)  $\V_{\rel}>-\infty$, (ii) the optimal set of the DWR  \eqref{eq_rel_rank} is  bounded, (iii) there are $\underline{m}$-dimensional  binding LMIs indexed by $T\subseteq [m]$ at optimality, and (iv) matrices $\{ \bm A_j - \Proj_{\H}(\bm A_j) \}_{j \in [m]\setminus T}$ are parallel with the same direction where $\H:= \spa( \{\bm A_i\}_{i\in T})$ denotes the linear combination of matrices $\{\bm A_i\}_{i\in T}$.
							\end{enumerate}
						\end{theorem}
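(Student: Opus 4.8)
The plan is to prove both implications by localizing the problem, at optimality, to the subsystem of binding LMIs indexed by $T$, whose technology matrices span an $\underline{m}$-dimensional space $\H$. Once the relevant optimal point is shown to sit on a face of $\clconv(\X)$ of dimension at most $\underline m$, Part (a) places it inside $\X$, exactly as the $\tilde m$-face condition is used in \autoref{them:ext} and \autoref{them:exactobj}. The whole novelty is that condition (iv) lets us pay only for the binding LMIs rather than for all $\tilde m$ of them.

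For $(a)\Rightarrow(b)$, I would begin from an optimal DWR solution $\bar{\bm X}$ whose binding LMIs are indexed by $T$ and span $\H$ with $\dim(\H)=\underline{m}$, which exists by condition (iii). Let $G$ be the minimal face of $\clconv(\X)$ containing $\bar{\bm X}$. If $\dim(G)\le\underline m$, then Part (a) gives $G\subseteq\X$, so $\bar{\bm X}\in\C$ is an optimal RCOP solution and $\V_{\opt}=\V_{\rel}$. Otherwise the directions of $\aff(G)$ orthogonal to $\H$ form a subspace of dimension at least $\dim(G)-\underline m\ge1$, from which I pick a nonzero $\bm Y$. Since $\bar{\bm X}\pm\epsilon\bm Y$ stays in $G\subseteq\clconv(\X)$ for small $\epsilon$, keeps each binding LMI at equality (as $\langle\bm A_i,\bm Y\rangle=0$ for $i\in T$), and keeps each non-binding LMI strictly slack, optimality of $\bar{\bm X}$ forces $\langle\bm A_0,\bm Y\rangle=0$, so the entire line through $\bar{\bm X}$ in direction $\bm Y$ is objective-flat.

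Condition (iv) enters here. Writing $\bm A_j=\Proj_{\H}(\bm A_j)+c_j\bm B$ for $j\in[m]\setminus T$ with a common direction $\bm B$ and coefficients $c_j\ge0$ (this is what ``parallel with the same direction'' supplies), orthogonality $\bm Y\perp\H$ yields $\langle\bm A_j,\bm Y\rangle=c_j\langle\bm B,\bm Y\rangle$. Choosing the sign of $\bm Y$ so that $\langle\bm B,\bm Y\rangle\le0$, every non-binding LMI obeys $\langle\bm A_j,\bar{\bm X}+\epsilon\bm Y\rangle\le\langle\bm A_j,\bar{\bm X}\rangle<b_j^u$ for all $\epsilon\ge0$; thus $\bar{\bm X}+\epsilon\bm Y$ remains feasible and optimal as long as it lies in $G$. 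If $\bm Y$ were a recession direction of $G$ this would produce an unbounded ray of optimal solutions, contradicting the bounded optimal set in condition (ii); hence the ray meets the relative boundary of $G$ at some $\epsilon^{*}>0$, landing on a strictly lower-dimensional face while the binding set stays equal to $T$ (non-binding LMIs remain strictly slack, so $\H$ and $\underline m$ are unchanged). Inducting on $\dim(G)$ drives us to an optimal point on a face of dimension at most $\underline m$, which Part (a) places in $\X$ and hence in $\C$, giving objective exactness.

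For the converse $(b)\Rightarrow(a)$, I would argue by contraposition, mirroring the necessity half of \autoref{them:ext}: if some face $F\in\F^{\underline m}(\clconv(\X))$ is not contained in $\X$, pick $\hat{\bm X}\in F\setminus\X$ and construct $\underline m$ equality LMIs (each of unit dimension) whose intersection with $\aff(F)$ isolates $\hat{\bm X}$, together with a linear objective exposing $\hat{\bm X}$ as the unique DWR optimum. With no non-binding LMIs present, condition (iv) holds vacuously and conditions (i)--(iii) are immediate (finite value, singleton hence bounded optimal set, binding dimension $\underline m$), so this instance satisfies the hypotheses of (b); yet $\hat{\bm X}\notin\X$ forces $\V_{\rel}<\V_{\opt}$, contradicting objective exactness. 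I expect the main obstacle to be the bookkeeping in the inductive descent of $(a)\Rightarrow(b)$: verifying that the sign choice dictated by condition (iv), combined with the boundedness hypothesis (ii) ruling out the recession case, genuinely lowers $\dim(G)$ at each step while leaving the binding index set $T$, its span $\H$, and the number $\underline m$ invariant, so that both the induction hypothesis and Part (a) remain applicable throughout.
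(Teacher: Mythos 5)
Your proof is correct, and the forward direction takes a genuinely different route from the paper's. The paper also localizes to the binding subsystem, forming $\hat{\C}_{\rel}:=\clconv(\X)\cap\{\bm X:\langle\bm A_i,\bm X\rangle=b_i^u,\ i\in T\}$, but it then argues globally via the representation theorem: it writes the optimal $\bm X^*$ as a convex combination of extreme points plus a conic combination of extreme directions of $\hat{\C}_{\rel}$, proves a claim that is exactly your computation $\langle\bm A_j,\bm D\rangle=c_{j\ell}\langle\bm A_\ell,\bm D\rangle$ for $\bm D\perp\H$, invokes boundedness of the optimal set to control the extreme directions against the non-binding LMIs, and extracts one extreme point of $\hat{\C}_{\rel}$ that is simultaneously feasible for the non-binding LMIs and optimal; that extreme point lies on a face in $\F^{\underline m}(\clconv(\X))$ by \autoref{cor_lem:genext} and hence in $\X$ by (a). Your argument replaces this decomposition with a local face-descent (purification): move from the relative interior of the minimal face $G$ along a direction orthogonal to $\H$ with the sign dictated by (iv), use (ii) to exclude the recession case, land on a strictly lower-dimensional face with the binding set unchanged, and induct until $\dim(G)\le\underline m$. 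Both proofs use (ii) and (iv) for exactly the same purpose; yours trades the paper's somewhat terse selection of the right extreme point from the decomposition for the inductive bookkeeping you flag, which does check out. The necessity direction is essentially the paper's (trivialize the non-binding LMIs and reuse the necessity arguments of \autoref{them:ext} and \autoref{them:exactobj}); two small details worth making explicit are that, since $b_i^l=-\infty$, each of your $\underline m$ equalities must be encoded as a pair of one-sided LMIs (which leaves the binding dimension equal to $\underline m$), and that a non-exposed extreme point of $\C_{\rel}$ requires the Straszewicz limiting argument already used in the proof of \autoref{them:exactobj} rather than a directly exposing objective.
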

						\begin{proof}We split the proof into two parts.\par
							\noindent \textbf{Part I.} When statement (b) holds, we can set all the non-binding LMIs to be trivial with all technology matrices and right-hand sides being zeros. Since there is no line in set $\C_{\rel}$,  following the similar argument to \autoref{them:exactobj} as well as  \autoref{them:ext}, we can derive statement (a).
							
							\noindent \textbf{Part II.} Let us next show that statement (a) implies statement (b). Without loss of generality, we assume that matrix $\bm A_i$ is nonzero for all $i\in [m]$. 
							As there are only $\underline{m}$-dimensional  binding LMIs indexed by $T$, the DWR  \eqref{eq_rel_rank} can equivalently reduce to the one with these binding LMIs. 
							We let $\hat{\C}_{\rel}$ denote the intersection set of these $\underline m$-dimensional binding LMIs with set $\clconv(\X)$, i.e., 
							$$\hat{\C}_{\rel}=\{\bm{X}\in \clconv(\X):	\langle\bm A_{i}, \bm{X} \rangle =b_i^u,\forall i\in T\}.$$
							According to our assumption, there exists an optimal DWR solution $\bm X^*\in \C_{\rel}$ satisfying $\bm X^* \in \hat{\C}_{\rel}$. There are two cases remaining to be discussed.
							\begin{enumerate}[{Case }1.]
								\item Suppose that $\bm X^*$ is an extreme point of set $\hat{\C}_{\rel}$. 
								Since there are $\underline{m}$-dimensional  LMIs in set $\hat{\C}_{\rel}$, \autoref{cor_lem:genext} shows that any extreme point of set $\hat{\C}_{\rel}$ belongs to a face in $\F^{\underline{m}}(\clconv(\X))$. Since any face $F\subseteq \X$ for all $F\in \F^{\underline{m}}(\clconv(\X))$ and $\bm X^*\in \C_{\rel}$, we have that $\bm X^*$ belongs to set $\C$, implying the objective exactness.
								
								\item Suppose that $\bm X^* $ is not an extreme point in  set $\hat{\C}_{\rel}$. Recall that $\H:= \spa( \{\bm A_i\}_{i\in T})$. Now we observe that
								\begin{claim} \label{claim}
									For any nonzero direction $\bm{D}$ in the LMI system $\{ \bm X \in \Q: \langle \bm A_i, \bm X\rangle =b_i, \forall i \in T \}$,  if there exists some $\ell \in [m]\setminus T$ such that $\bm A_{\ell} - \Proj_{\H}(\bm A_{\ell}) \neq \bm 0$ and  $\langle \bm A_{\ell}, \bm D \rangle \le 0$, then $\langle \bm A_j, \bm{D} \rangle \le 0$ holds for all $j\in [m]\setminus T$.
								\end{claim}
								\begin{proof}
									Since  $\langle \bm A_i, \bm{D} \rangle =0$ for all $i\in T$ (i.e., $\bm D \perp \H$), we have that for each $j\in [m]\setminus T$,
									\begin{align*}
										\langle\bm A_{j}, \bm{D}  \rangle &=    \langle \Proj_{\H}(\bm A_j) + \bm A_j - \Proj_{\H}(\bm A_j),  \bm{D}  \rangle =  \langle \bm A_j - \Proj_{\H}(\bm A_j), \bm{D}  \rangle \\
										&= c_{j\ell} \langle \bm A_{\ell} - \Proj_{\H}(\bm A_{\ell}), \bm D \rangle =  c_{j\ell}\langle \bm A_{\ell} , \bm{D}  \rangle  \le 0, \ \  \exists  c_{j\ell}\ge 0,
									\end{align*}
									where the second and last equations are from the fact that $\bm D \perp \H$ and $\Proj_{\H}(\bm A_{\ell})$ is parallel with $\H$  for all $j\in [m]\setminus T$ based on \autoref{def:proj}, and the third one is because the matrices $\{\bm A_j - \Proj_{\H}(\bm A_j) \}_{j \in [m]\setminus T}$ are parallel with the same direction.  
									Therefore, if $\bm{D}$ is also a direction in some non-binding LMI, it must be a direction for all non-binding LMIs.
									\qedA
								\end{proof}
								%
								
								Then, using the representation theorem 18.5 in \cite{rockafellar2015convex}, the optimal solution $\bm X^*$ in the closed convex line-free set $\hat{\C}_{\rel}$ can be represented as a finite convex combination of extreme points and a finite conic combination of extreme directions below.
								\[\bm  X^* = \sum_{l\in  [\tau]} \alpha_l \bm X_l + \sum_{l \in  [\hat{\tau}]} \beta_{l} \bm Y_l, \]
								where  $\{\bm X_1, \cdots, \bm X_\tau\}$ denote extreme points of set $\hat{\C}_{\rel}$ which belong to $\F^{\underline{m}}(\clconv(\X))$ by \autoref{cor_lem:genext},  $\{\bm  Y_1, \cdots, \bm Y_{\hat{\tau}}\}$
								denote extreme directions of set $\hat{\C}_{\rel}$, $\bm \alpha \in \Re_{++}^\tau$, $\sum_{l\in [\tau]}\alpha_l=1$, and $\bm \beta \in \Re_{++}^{\hat{\tau}}$.

								Since $\bm{X}^*\in\arg\min_{\bm{X}\in \hat{\C}_{
										\rel}}\langle \bm A_0, \bm X \rangle $, we must have $\langle \bm A_0, \bm X_l \rangle =\langle \bm A_0, \bm X^* \rangle $ for all $l\in [\tau]$ and $\langle \bm A_0, \bm Y_l \rangle=0$ for all $l\in [\hat{\tau}]$.
								According to \autoref{claim} and the presumption that the set of optimal solutions is bounded and optimal value is finite, for any extreme direction $\bm Y_l$ in set $\hat{C}_{\rel}$,  we have that  $\langle \bm A_j, \bm Y_l \rangle \geq0$ for all $j\in [m]\setminus T$. Hence, for any $j\in [m]\setminus T$, we must have $\langle \bm A_j, \bm X^* \rangle \geq  \sum_{l\in  [\tau]} \alpha_l\langle \bm A_j, \bm X_l \rangle $.
								It follows that there exists an extreme point $\bm X_{l^*}$ in set $\hat{C}_{\rel}$ such that $\bm X_{l^*} - \bm X^*$ is a direction for all non-binding LMIs according to \autoref{claim}. Therefore, we can conclude $\bm X_{l^*} \in  \C_{\rel}$. Along with the previous result that $\langle \bm A_0,  \bm X^*\rangle=\langle \bm A_0, \bm X_{l^*} \rangle$,  the extreme point $\bm X_{l^*}$ of set $\hat{C}_{\rel}$  is also optimal to DWR. According to \textbf{Case 1}, we must have $\bm X_{l^*}\in \C$, which proves the objective exactness. \qed
							\end{enumerate}
						\end{proof}
						
						In fact, the objective exactness in \autoref{eg:sdqcqp} satisfies the above condition
						since any zero- or one-dimensional face of $\clconv(\X)$ is contained in domain set $\X$, and there is only $\underline{m}=1$ binding LMI and one non-binding LMI at the unique optimal solution $a_2$ as shown in \autoref{fig:sdqcqp}. 
						Besides, we remark that
						\begin{enumerate}[(i)]
							\item Part (a) in \autoref{them:bind}  serves as a relaxed simultaneously necessary and sufficient condition of the DWR objective exactness with a finite optimal  value, bounded optimal set, and at most $\underline{{m}}$-dimensional binding LMIs at optimality, provided that the non-binding constraints are parallel in the projected space. This  condition can be  more general than that in \autoref{them:exactobj} since $\underline{m}\leq \tilde{m}$ implies that $ \F^{\underline{m}}\left(\conv(\X)\right) \subseteq \F^{\tilde{m}}\left(\conv(\X)\right)$;
							\item \autoref{them:bind} provides a fresh geometric angle for illustrating the significance of the binding LMIs when determining the  objective exactness;
							\item It is worth mentioning that \autoref{them:bind} requires a bounded DWR optimal set. The DWR with a unbounded optimal set may not always achieve the objective exactness as shown in
							\autoref{egbd} below. On the other hand, the bounded set $\C_{\rel}$ implies that the optimal set must be bounded; 
							\item 
							The assumption on the non-binding LMIs  in \autoref{them:bind} is to guarantee the objective exactness for any $m$ LMIs. 
							Note that if there is only one non-binding LMI, then the assumption readily holds; and 
							\item Given the domain set $\X:=\{\bm X\in \S_+^n: \rank(\bm X)\le 1\}$, our \autoref{them:bind}  generalizes the classical QCQP result in \cite{ye2003new} with two quadratic constraints, where the authors showed that the DWR objective exactness holds if one of the quadratic constraints is not binding under the Slater conditions of both the DWR and its dual, 
							which essentially implies the boundedness of both the DWR optimal value and optimal set. 
							Applying our \autoref{them:bind}, we arrive at a more general conclusion by relaxing both primal and dual Salter conditions,  as summarized in \autoref{cor:bind}.
							For example, \autoref{cor:bind} can cover  the case that  the DWR feasible set $\C_{\rel}$ is a singleton, while \cite{ye2003new} cannot.
						\end{enumerate}
						
						\begin{example}\label{egbd} \rm
							Using the same domain set $\X=\{\bm X\in \S_+^2: \rank(\bm X)\le 1, X_{12}=0\}$ and its closed convex hull $\clconv(\X)=\{\bm X\in \S_+^2: X_{12}=0\}$ as  \autoref{eg:ray}, we consider $m=\tilde m=2$ LMIs- $X_{11}\le X_{22}$ and $X_{22}\ge1$. The corresponding feasible sets $\C$ and $\C_{\rel}$ of RCOP \eqref{eq_rank} and its DWR \eqref{eq_rel_rank}  are 
							\[ \C:=\{ \bm X\in \X:  X_{11}\le X_{22}, X_{22}\ge 1  \},  \  \ \C_{\rel}:=\{ \bm X \in \clconv(\X):  X_{11}\le X_{22}, X_{22}\ge 1 \},\]
							which are marked as a red line and a shadow area in Figure \ref{udc} and Figure \ref{udcrel}, respectively.
							
							By setting the objective function to be $X_{22}-X_{11} $, the DWR optimal value is $\V_{\rel}=0$ with the unbounded  optimal set $\{(X_{11}, X_{22}): X_{11}=X_{22}, X_{22}\ge 1\}$.
							There is an optimal solution with only one binding LMI, i.e., $\underline{m}=1$. Since only one LMI is not binding, the parallel assumption required by \autoref{them:bind} naturally holds  as indicated in Part (iv) above. Although any no larger than one-dimensional face of $\clconv(\X)$ is contained in domain set $\X$, we have $\V_{\opt}=1> \V_{\rel}=0$, i.e., the objective exactness fails in this example. 
							Thus, our condition in \autoref{them:bind} is not 
							sufficient for objective exactness when  the optimal set is unbounded. 
							\qedA
						\end{example}
						
						\begin{figure}[htb]
							\centering
							\vskip -0.1in
							\subfigure[$\C$ ]{
								{\includegraphics[width=0.21\columnwidth]{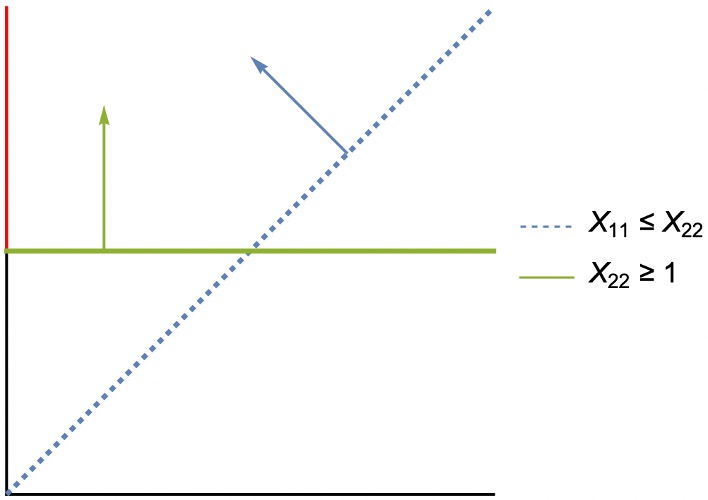}}	\label{udc}}
							\hspace{7em}
							\subfigure[$\C_{\rel}$ ]{
								{\includegraphics[width=0.21\columnwidth]{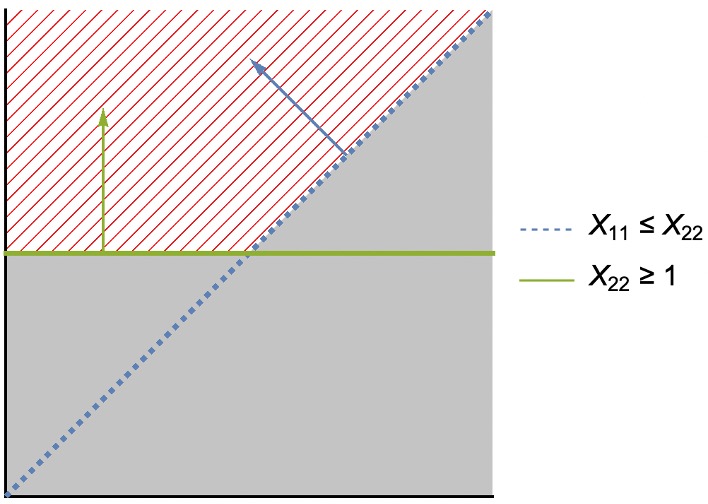}}	\label{udcrel}}
							\caption{Illustration of Sets in \autoref{eg:sdqcqp}  and $\C_{\rel}\neq \clconv(\C)$.  }
							\label{fig:ubd}
						\end{figure}
						\vspace{-1.5em}
						\begin{corollary}\label{cor:bind}
							For QCQP \eqref{qcqp} with two  one-sided quadratic constraints, suppose that its DWR has a finite optimal value and a bounded optimal set. Then the objective exactness holds if one  LMI  constraint of DWR is not binding at optimality.
						\end{corollary}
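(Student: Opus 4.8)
The plan is to obtain this corollary as a direct specialization of \autoref{them:bind} to the rank-one domain set of QCQP. First I would recast the QCQP with two one-sided quadratic constraints into the RCOP form \eqref{eq_qcqp}, whose domain set is $\X := \{\bm X \in \S_+^{n+1}: \rank(\bm X) \le 1\}$. This set is closed and conic, and by \autoref{lem:psd} its closed convex hull is $\clconv(\X) = \S_+^{n+1}$, which is line-free; hence the standing hypotheses of \autoref{them:bind} are satisfied.

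The next step handles the equality constraint $X_{11}=1$, which must be put in one-sided form so that $b_i^l=-\infty$ as the subsection requires. Following the reduction described just before \autoref{eg:sdqcqp}, I would split $X_{11}=1$ into the two one-sided LMIs $\langle \bm E_{11}, \bm X\rangle \le 1$ and $\langle -\bm E_{11}, \bm X\rangle \le -1$, where $\bm E_{11}$ denotes the matrix with a single one in the top-left entry and zeros elsewhere. Together with the two quadratic LMIs $\langle \bm A_1, \bm X\rangle \le b_1^u$ and $\langle \bm A_2, \bm X\rangle \le b_2^u$, this produces an RCOP with only one-sided LMIs.

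The crux is to identify the binding LMIs at an optimal DWR solution and to bound the dimension $\underline{m}$ of their technology matrices. Since $X_{11}=1$ holds at every feasible point, both $\bm E_{11}$ and $-\bm E_{11}$ are always binding, but being linearly dependent they contribute only dimension one. By hypothesis one quadratic constraint---say the one with matrix $\bm A_2$---is not binding at optimality, so the binding technology matrices lie in $\{\bm A_1, \bm E_{11}, -\bm E_{11}\}$, whose span has dimension $\underline{m}\le 2$. Invoking \autoref{lem:psd} with $k=1$ gives $\tfrac{k(k+3)}{2}=2$, so every face of $\S_+^{n+1}$ of dimension at most two is contained in $\X$; in particular $F\subseteq \X$ for all $F\in \F^{\underline{m}}(\clconv(\X))$, which establishes Part (a) of \autoref{them:bind}.

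Finally I would verify the remaining hypotheses of Part (b): the assumptions supply $\V_{\rel}>-\infty$ and a bounded optimal set (which together force attainment of the optimum on the binding constraints), and because exactly one LMI, namely $\bm A_2$, is non-binding, the parallel condition on $\{\bm A_j - \Proj_{\H}(\bm A_j)\}_{j\in[m]\setminus T}$ holds vacuously---a single matrix is trivially parallel to itself---as recorded in remark (iv) following \autoref{them:bind}. With all hypotheses in force, \autoref{them:bind} yields $\V_{\opt}=\V_{\rel}$, i.e., objective exactness. The only delicate point is the bookkeeping around the equality constraint: one must check that the pair $\bm E_{11}, -\bm E_{11}$ inflates neither $\underline{m}$ beyond two (so that \autoref{lem:psd} still applies) nor the number of non-binding LMIs (so that the parallel condition stays trivial).
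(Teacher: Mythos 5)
Your proof is correct and follows exactly the route the paper intends: the paper's own argument is a one-line citation of \autoref{lem:psd} and \autoref{them:bind}, and you have supplied precisely the missing bookkeeping (splitting $X_{11}=1$ into two one-sided LMIs, bounding $\underline{m}\le 2$, invoking the $k=1$ case of \autoref{lem:psd} so that every face in $\F^{\underline{m}}(\S_+^{n+1})$ lies in $\X$, and observing that the parallel condition is vacuous when there is a single non-binding LMI). The only caveat---shared by the paper's own statement---is that if both quadratic constraints happened to be slack at optimality, condition (iv) of \autoref{them:bind} would no longer be automatic; your argument, like the paper's, implicitly reads the hypothesis as exactly one non-binding constraint.
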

						\begin{proof}
							This result can be derived by leveraging   \autoref{lem:psd} and \autoref{them:bind}.
							\qed
						\end{proof}
						
						Let us now turn to two application problems in fair unsupervised learning whose DWRs can achieve objective exactness  by using the binding LMI-based condition in \autoref{them:bind}.\par
						\noindent \textbf{Fair PCA (FPCA).} Fair PCA (FPCA) extends the conventional PCA to the dataset concerning different groups. The seminal work  \citep{samadi2018price} presented a real-world example to show that  the conventional PCA can cause gender bias and introduced the notion of FPCA to improve the fairness of the conventional PCA. Their follow-up work \citep{tantipongpipat2019multi} proposed  the following rank-$k$ constrained formulation for FPCA that seeks to optimize the dimensionality reduction  over $m$ different groups in a fair way:
						\begin{align}\label{eq:fpca}
							\text{(FPCA)} \quad \V_{\opt}:=  \max_{(z, \bm X) \in \Re\times\X} \left\{z: z\le \langle  \bm A_i, \bm X \rangle, \forall i \in [m]\right\}, \ \ \X:=\left\{\bm X\in \S_+^n:  \rank(\bm X)\le k, ||\bm X||_2\le 1\right\},
						\end{align}
						where  $\bm A_i \in \S_+^n$ denotes the covariance matrix of the $i$-th group for each  $i\in [m]$. It is important to note that FPCA fits in our RCOP framework \eqref{eq_rank}  by observing that at optimality, 
						$z$ is equal to at least one of $\{ \langle \bm A_i, \bm X \rangle\}_{i\in [m]}$, i.e., FPCA \eqref{eq:fpca} is equivalent to
						\begin{align*}
							\text{(FPCA)} \quad  \V_{\opt}:= \max_{j\in [m]} \max_{\bm X \in\X} \left\{\langle\bm A_j, \bm X \rangle:\langle\bm A_j, \bm X \rangle\le \langle  \bm A_i, \bm X \rangle, \forall i \in [m]\right\}.
						\end{align*}
						For ease of analysis, we focus on FPCA \eqref{eq:fpca} with auxiliary variable $z$. 
						We first derive the properties of the closed convex hull of FPCA domain set $\X$.
						\begin{lemma}\label{lem:fpca}
							Suppose that the domain set $\X:=\left\{\bm X\in \S_+^n: \rank(\bm X)\le k, ||\bm X||_2\le 1 \right\}$, then we have
							\begin{enumerate}[(i)]
								\item $\clconv(\X)=\conv(\X):=\left\{\bm X\in \S_+^n: \tr(\bm X)\le k, ||\bm X||_2\le 1 \right\}$; and
								\item Any no larger than one-dimensional face of  $\clconv(\X)$ is contained in $\X$, i.e.,  $F \subseteq \X$ for all $F\in \F^{1}(\clconv(\X))$.
							\end{enumerate}
						\end{lemma}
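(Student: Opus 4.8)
The plan is to prove the two parts separately, with the facial inclusion in part (ii) carrying essentially all of the difficulty.

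\textbf{Part (i).} First I would note that $\X$ is compact: it is bounded by $\|\bm X\|_2\le 1$ and closed, since each defining condition (positive semidefiniteness, $\rank(\bm X)\le k$, and $\|\bm X\|_2\le 1$) is closed. Hence $\conv(\X)$ is compact, so it is already closed and $\clconv(\X)=\conv(\X)$. The inclusion $\conv(\X)\subseteq\{\bm X\in\S_+^n:\tr(\bm X)\le k,\ \|\bm X\|_2\le 1\}$ is immediate because the right-hand side is convex (trace is linear, spectral norm convex, the cone $\S_+^n$ convex) and contains $\X$: every $\bm X\in\X$ has at most $k$ nonzero eigenvalues, each in $[0,1]$, so $\tr(\bm X)\le k$. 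For the reverse inclusion I would diagonalize a right-hand-side matrix $\bm X=\bm U\diag(\lambda)\bm U^\top$ with $\lambda\in[0,1]^n$ and $\sum_i\lambda_i\le k$, and reduce to the eigenvalue vector: the polytope $\{\lambda\in[0,1]^n:\mathbf 1^\top\lambda\le k\}$ has all its vertices at $0/1$ vectors with at most $k$ ones (any vertex is fixed by $n$ tight independent constraints among $\{\lambda_i=0,\ \lambda_i=1,\ \mathbf 1^\top\lambda=k\}$, and if the cardinality constraint is active the remaining $n-1$ box constraints force the last coordinate to be integral as well). Writing $\lambda$ as a convex combination of such $0/1$ vertices and conjugating each by $\bm U$ expresses $\bm X$ as a convex combination of $0/1$-spectrum matrices of rank $\le k$, all lying in $\X$; hence $\bm X\in\conv(\X)$.

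\textbf{Part (ii).} The core step is to compute the dimension of the smallest face of $D:=\clconv(\X)$ containing a given point $\bm X$, in terms of its spectrum. I would split the eigenspaces of $\bm X$ into $W$ (eigenvalue $1$, dimension $a$), $I$ (eigenvalues in $(0,1)$, dimension $b$), and $Z=\ker\bm X$ (dimension $n-a-b$), so $\rank(\bm X)=a+b$. By \autoref{def:face}, $\dim F$ equals the dimension of the lineality space of symmetric directions $\bm H$ with $\bm X\pm\epsilon\bm H\in D$ for all small $\epsilon>0$. I claim this space is exactly the symmetric matrices supported on the $I$-block: the positive semidefinite constraint, applied at the kernel $Z$, forces the $Z$-rows and $Z$-columns of $\bm H$ to vanish (both signs must remain PSD); the spectral constraint, rewritten for PSD $\bm X$ as $\bm I-\bm X\succeq 0$ with kernel $W$, forces the $W$-rows and $W$-columns of $\bm H$ to vanish by the identical PSD-face argument; and if $\tr(\bm X)=k$ is active, then $\tr\bm H=0$ removes one further dimension. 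This gives $\dim F=\binom{b+1}{2}$ when $\tr(\bm X)<k$ and $\binom{b+1}{2}-1$ when $\tr(\bm X)=k$.

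With this formula the conclusion is arithmetic. If $\dim F\le 1$ then $b\le 1$: when $b=0$, $\bm X$ is a projection with $\rank(\bm X)=a=\tr(\bm X)\le k$; when $b=1$ the trace cannot be active, since $a+\lambda=k$ is impossible for integer $a$ and $\lambda\in(0,1)$, so $\dim F=1$ with $a+\lambda<k$, forcing $a\le k-1$ and $\rank(\bm X)=a+1\le k$. In both cases the relative interior of $F$ lies in $\X$, and since the (at most two) endpoints of a $\le 1$-dimensional face are $0$-dimensional faces, hence projections of rank $\le k$, the entire face satisfies $F\subseteq\X$.

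I expect the main obstacle to be the face-dimension computation, and specifically the argument that the spectral-norm ball contributes a PSD-type facial structure at the top eigenvalue; routing this through the equivalence $\|\bm X\|_2\le 1\iff\bm I-\bm X\succeq 0$ is what makes it uniform with the kernel analysis. A secondary subtlety is that the rank bound must hold on all of $F$, not just at a relative-interior point, which is why the endpoint analysis is needed. As a consistency check, the same formula shows that a $2$-dimensional face (e.g.\ $b=2$ with active trace) can have rank $k+1$, confirming that the bound ``one-dimensional'' in the statement is tight.
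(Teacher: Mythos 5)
Your proposal is correct. Part (i) matches the paper's argument in substance: both reduce to the eigenvalue vector and use the convex hull of the cardinality-constrained box $\{\bm\lambda\in[0,1]^n:\|\bm\lambda\|_0\le k\}$; the paper cites this hull description while you re-derive it by a vertex analysis of the polytope, which is a harmless difference. Part (ii), however, takes a genuinely different route. The paper argues by contradiction in the style of its \autoref{lem:psd}: it takes a rank-$r>k$ point of a one-dimensional face, splits the spectrum into the $r_1$ unit eigenvalues and the $r_2$ fractional ones, observes that $\tr\le k$ forces $r_2\ge 2$ so that $\S^{r_2}$ has dimension at least $3$, and then constructs a perturbation $\bm Q_2\bm\Delta\bm Q_2^\top$ orthogonal to the face that nevertheless stays in the face — a contradiction. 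You instead compute, for every point of $\clconv(\X)$, the exact dimension of the smallest face containing it, namely $\binom{b+1}{2}$ (minus one when the trace is active) where $b$ counts eigenvalues in $(0,1)$, by identifying the feasible two-sided perturbation directions and treating the spectral-norm constraint as the PSD constraint $\bm I-\bm X\succeq 0$; the rank bound then falls out arithmetically. Your approach is heavier up front but yields strictly more information (a full facial-dimension formula, from which the tightness of the one-dimensional bound is immediate), whereas the paper's perturbation count is lighter and parallels its earlier rank-bound lemma. Two small points to tidy: your closed-form $\binom{b+1}{2}-1$ degenerates when $b=0$ and the trace is active (the correct dimension is $0$, not $-1$, since the trace functional is identically zero on a trivial direction space — this does not affect the conclusion); and the passage from relative-interior points to all of $F$ is cleanest via the observation that the smallest face of $\clconv(\X)$ containing any $\bm Y\in F$ is itself a face of dimension at most $\dim F\le 1$, which your endpoint argument accomplishes equivalently.
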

						\begin{proof}
							\textbf{Part (i).} Note that $\clconv(\X)=\conv(\X)$ since the domain set $\X$ is compact.  Suppose $\bm X  = \bm Q\Diag(\bm \lambda) \bm Q^{\top}$ denotes the eigen-decomposition of matrix $\bm X$ in $\X$, then the domain set $\X$ is equivalent to
							$$\X:=\{\bm X\in \S_+^n: \bm \lambda \in \Re^n_+, ||\bm \lambda||_0 \le k, ||\bm \lambda||_{\infty}\le 1, \bm Q \bm  Q^{\top} = \bm I_n, \bm X  = \bm Q\Diag(\bm \lambda) \bm Q^{\top} \}.$$
							It is known  that $\conv \left( \left\{\bm \lambda\in \Re_+^n: ||\bm \lambda||_0 \le k, ||\bm \lambda||_{\infty}\le 1 \right\} \right) = \left\{\bm \lambda \in \Re_+^n: ||\bm \lambda||_1 \le k, ||\bm \lambda||_{\infty}\le 1 \right\}$ (see, e.g.,  \cite{argyriou2012sparse}).  Hence, we have
							\begin{align*}
								\conv(\X)&=\conv\left(\{\bm X\in \S_+^n: \bm{\lambda}\in \conv\{\bm \lambda \in \Re^n_+: ||\bm \lambda||_0 \le k, ||\bm \lambda||_{\infty}\le 1\}, \bm Q \bm  Q^{\top} = \bm I_n, \bm X  = \bm Q\Diag(\bm \lambda) \bm Q^{\top} \} \right)\\
								&=\conv\left(\{\bm X\in \S_+^n: \bm \lambda \in \Re^n_+, ||\bm \lambda||_1 \le k, ||\bm \lambda||_{\infty}\le 1 , \bm Q \bm  Q^{\top} = \bm I_n, \bm X  = \bm Q\Diag(\bm \lambda) \bm Q^{\top} \} \right)\\
								&=\left\{\bm \X \in \S_+^n: \tr(\bm X) \le k, ||\bm X||_2\le 1 \right\},
							\end{align*}
							where the last equation stems from projecting out variables $\bm\lambda, \bm Q$ and the identities: 
							$||\bm \lambda||_1 = \tr(\bm X)$ and $||\bm \lambda||_{\infty} = ||\bm X||_2$ for any $\bm X\in \S_+^n$. 
							
							\textbf{Part (ii).}  If there exists a one-dimensional face $F \subseteq \clconv(\X)$  not contained in $\X$, then we can find a matrix $\hat{\bm X}\in F\setminus \X$ of  a rank $r$ greater than $k$.  Let $\hat{\bm X} = \bm Q_1 \bm \Lambda_1 \bm Q_1^{\top} + \bm Q_2 \bm \Lambda_2 \bm Q_2^{\top}$ denote the eigen-decomposition of matrix $\hat{\bm X}$, where  the eigenvalues attaining one compose the diagonal matrix $\bm \Lambda_1 \in \S_{++}^{r_1}$ and the other fractional eigenvalues are in $\bm \Lambda_2 \in \S_{++}^{r_2}$.  Note that since $\tr(\hat{\bm X})\leq k, k<r$ and $\|\hat{\bm X}\|_2\le 1$, we must have
							$r_1+r_2=r$ and $r_2 \ge 2$. Following the proof in \autoref{lem:psd}, as ${r_2}(r_2+1)/2\ge 3$, we can construct a nonzero symmetric matrix $\bm \Delta \in \S^{r_2}$ satisfying $\bm Q_2  \bm \Delta  \bm Q_2^{\top} \perp F$ and  $\hat{\bm X} \pm \delta \bm Q_2  \bm \Delta  \bm Q_2^{\top} \in F$ for significantly small $\delta>0$. A contradiction. \qed
						\end{proof}
						
						According to  Part (i) in \autoref{lem:fpca}, the DWR of FPCA \eqref{eq:fpca} is defined by
						\begin{align}\label{eq:fpca_dwr}
							\V_{\rel}:=\max_{(z, \bm \X) \in \Re\times\clconv(\X)} \left\{z: z\le \langle  \bm A_i, \bm X \rangle, \forall i \in [m]  \right\},  
						\end{align}
						where $\clconv(\X)=\left\{\bm X\in \S_+^n: \tr(\bm X)\le k, ||\bm X||_2\le 1 \right\}$.
						Part (ii) in \autoref{lem:fpca} indicates that the domain set $\X$ of FPCA \eqref{eq:fpca} contains all zero- and one-dimensional faces of $\clconv(\X)$. In the following, we show by using the proposed conditions that the convex hull exactness holds for FPCA whenever $\tilde{m}=2$, i.e., there are two independent covariance matrices from $m$ groups.
						Note that the set $\clconv(\X)$ in DWR \eqref{eq:fpca_dwr} contains no line, and DWR \eqref{eq:fpca_dwr} always has finite objective value.
						%
						%
						
						
						\begin{corollary}\label{cor:fpca}
							For FPCA \eqref{eq:fpca} with $\tilde{m}=2$ linearly independent covariance matrices, its  DWR  \eqref{eq:fpca_dwr}  admits the convex hull exactness.
						\end{corollary}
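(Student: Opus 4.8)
The plan is to reduce the convex hull exactness to a statement about the extreme points of $\C_{\rel}$ and then settle those points using the facial inclusion of \autoref{lem:fpca}. Writing $g(\bm X):=\min_{i\in[m]}\langle\bm A_i,\bm X\rangle$, the DWR feasible set is $\C_{\rel}=\{(z,\bm X)\in\Re\times\clconv(\X):z\le\langle\bm A_i,\bm X\rangle,\ \forall i\in[m]\}$. Since $\clconv(\X)$ is compact by \autoref{lem:fpca}, the set $\C_{\rel}$ is closed, convex, and line-free, and its only recession direction is the downward ray $\{(-t,\bm 0):t\ge0\}$, which plainly also recedes $\C$ and hence $\clconv(\C)$. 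As $\clconv(\C)\subseteq\C_{\rel}$ always holds, the representation theorem 18.5 in \cite{rockafellar2015convex} reduces the desired identity $\C_{\rel}=\clconv(\C)$ to verifying that every extreme point of $\C_{\rel}$ belongs to $\clconv(\C)$.

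First I would fix an extreme point $(z^*,\bm X^*)\in\ext(\C_{\rel})$; its $z$-coordinate must be tight, so $z^*=g(\bm X^*)=\langle\bm A_i,\bm X^*\rangle$ for every $i$ in a nonempty active set $I$, and I fix some $i_0\in I$. Let $F$ be the minimal face of $\clconv(\X)$ containing $\bm X^*$ with lineality space $L_F$. The crucial step is a localized version of \autoref{cor_lem:genext}: if some nonzero $\bm D\in L_F$ had $\langle\bm A_i,\bm D\rangle$ constant over $i\in I$, then $(z^*\pm\epsilon\langle\bm A_{i_0},\bm D\rangle,\bm X^*\pm\epsilon\bm D)$ would lie in $\C_{\rel}$ for small $\epsilon>0$ and exhibit $(z^*,\bm X^*)$ as a midpoint, contradicting extremeness. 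Hence $L_F\cap\spa\{\bm A_i-\bm A_{i_0}:i\in I\}^{\perp}=\{\bm 0\}$, which forces $\dim F\le e_I:=\dim\spa\{\bm A_i-\bm A_{i_0}:i\in I\}$; here the free variable $z$ is exactly what absorbs one dimension relative to the raw count $\tilde m$.

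The proof then splits on $e_I$. Because every $\bm A_i$ lies in the $\tilde m=2$ dimensional span, $e_I\le 2$, and in fact $e_I\le 1$ unless $\bm A_{i_0}\in\spa\{\bm A_i-\bm A_{i_0}:i\in I\}$. In the generic case $e_I\le1$ we get $\dim F\le1$, so \autoref{lem:fpca} yields $F\subseteq\X$, giving $\bm X^*\in\X$ and $(z^*,\bm X^*)\in\C$. I expect the degenerate case $e_I=2$ to be the main obstacle, since then $F$ may be a two-dimensional face that \autoref{lem:fpca} does not certify to lie in $\X$. I would break it using positive semidefiniteness: $e_I=2$ means $\bm A_{i_0}=\sum_{i\in I}c_i(\bm A_i-\bm A_{i_0})$, so pairing with $\bm X^*$ and using $\langle\bm A_i-\bm A_{i_0},\bm X^*\rangle=0$ gives $z^*=\langle\bm A_{i_0},\bm X^*\rangle=0$. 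For $\bm A_{i_0},\bm X^*\in\S_+^n$ this means $\bm A_{i_0}\bm X^*=\bm 0$, i.e. $\col(\bm X^*)\subseteq\ker(\bm A_{i_0})$. Decomposing $\bm X^*=\sum_l\mu_l\bm X_l$ with $\bm X_l\in\X$ (available since $\bm X^*\in\conv(\X)=\clconv(\X)$), each summand inherits $\col(\bm X_l)\subseteq\col(\bm X^*)\subseteq\ker(\bm A_{i_0})$, so $0\le g(\bm X_l)\le\langle\bm A_{i_0},\bm X_l\rangle=0$ and $(0,\bm X_l)\in\C$; thus $(z^*,\bm X^*)=(0,\bm X^*)\in\conv(\C)\subseteq\clconv(\C)$.

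In either case the extreme point lies in $\clconv(\C)$, and together with the matched recession direction this closes the argument through the representation theorem. Finally, I would remark that the identical template proves \autoref{cor:fsvd} once the singular-value analogue of \autoref{lem:fpca} is in hand, with $\S_+^n$ replaced by the relevant non-symmetric domain and the annihilation step recast through the corresponding orthogonality of row and column spaces.
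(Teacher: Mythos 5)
Your proposal is correct and rests on the same skeleton as the paper's proof: first match the recession cones of $\C_{\rel}$ and $\clconv(\C)$ (both equal to $\{(z,\bm 0): z\le 0\}$), then invoke the representation theorem to reduce everything to extreme-point exactness, which is settled by the facial inclusion $\F^{1}(\clconv(\X))\subseteq\X$ from \autoref{lem:fpca}. Where you diverge is in how the extreme-point step is executed. The paper takes $m=2$ without loss of generality and splits on whether one or both LMIs are binding, passing in the two-binding case to the single constraint $\langle \bm A_1-\bm A_2,\bm X\rangle=0$ so that \autoref{cor_lem:genext} applies with a one-dimensional system. You instead prove a localized bound $\dim F\le\dim\spa\{\bm A_i-\bm A_{i_0}: i\in I\}$ over the active set $I$, which recovers the paper's two cases when $m=2$ but also exposes a genuinely extra configuration: $m>2$ groups with $\tilde m=2$ and three or more active constraints whose differences span the full two-dimensional space. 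Your resolution of that case --- forcing $z^*=0$, deducing $\bm A_{i_0}\bm X^*=\bm 0$ from positive semidefiniteness, and decomposing $\bm X^*$ into rank-$k$ pieces that each lie in $\C$ --- has no counterpart in the paper and is precisely what is needed to justify the paper's ``without loss of generality'' reduction to two constraints (the lifted constraints $(-1,\bm A_i)$ need not span a two-dimensional space even when $\tilde m=2$). So your argument buys a more complete treatment of the $m>\tilde m$ situation at the cost of using the positive semidefiniteness of the covariance matrices, an assumption the paper's two-constraint version does not need; note also that this PSD step would have to be reworked (via the row/column-space orthogonality you mention) before the template transfers to \autoref{cor:fsvd}.
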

						\begin{proof}
							Without loss of generality, the  feasible sets of the FPCA \eqref{eq:fpca}  and DWR  \eqref{eq:fpca_dwr} with $\tilde{m}=2$ can be written as
							\begin{align*}
								&\C:= \left\{(z, \bm X) \in \Re\times \S_+^n: z \le \langle \bm A_1, \bm X \rangle, z \le \langle  \bm A_2, \bm X \rangle, \rank(\bm X)\le k, ||\bm X||_2\le 1 \right\},\\	
								&\C_{\rel}:= \left\{(z, \bm X) \in \Re\times \S_+^n: z \le \langle \bm A_1, \bm X \rangle, z \le \langle  \bm A_2, \bm X \rangle, \tr(\bm X)\le k, ||\bm X||_2\le 1 \right\}.
							\end{align*}
							We see that the recession cone of set  $\C_{\rel}$ in DWR  \eqref{eq:fpca_dwr} is $\reccone(\C_{\rel}):=\{(z, \bm 0): z \le 0 \}$, and $\reccone(\C_{\rel}) =\reccone(\clconv(\C))$. Thus, to prove $\C_{\rel} = \clconv(\C)$, we  only need to show the extreme point  exactness of the set $\C_{\rel}$. For any extreme point $(\hat{z}, \hat{\bm X})$ in set $\C_{\rel}$,  two cases are discussed below.
							\begin{enumerate}[1)]
								\item If only one LMI is binding at $(\hat{z}, \hat{\bm X})$, given that any no larger than one-dimensional face of $\clconv(\X)$ is contained in $\X$ shown in \autoref{lem:fpca}, using the extreme point exactness result in \autoref{them:ext}, we have that $\hat{\bm X}$ is rank-$k$ and $(\hat{z}, \hat{\bm X}) \in \C$.
								\item If both LMIs are binding at $(\hat{z}, \hat{\bm X})$, then  $\hat{\bm X}$ is also an extreme point of set
								$\left\{\bm X \in \clconv(\X): \langle \bm A_1 -\bm A_2, \bm X \rangle = 0\right\}$. 
								Similarly, we can show that  $\hat{\bm X}\in \X$. Thus, $(\hat{z}, \hat{\bm X}) \in \C$. 	\qed
							\end{enumerate}
						\end{proof}

						We remark that (i) although set $\C_{\rel}$ in DWR \eqref{eq:fpca_dwr} is unbounded  (as variable $z$ can be $-\infty$), its convex hull exactness still holds with $\tilde{m}=2$ groups since the recession cones of $\C_{\rel}$ and $\clconv(\C)$ coincide; 
						(ii) our result extends the extreme point exactness for FPCA in  \cite{tantipongpipat2019multi}, where the authors proved that the upper bound of the rank of all extreme points in the set $\C_{\rel}$ is linear in $\sqrt{{m}}$ and the rank bound becomes $k$ when ${m}=2$; and 
						(iii) this result can be further generalized to objective exactness when there are $\underline{m}=2$-dimensional binding LMIs of DWR \eqref{eq:fpca_dwr} at optimality according to our  \autoref{them:bind}, which is summarized in \autoref{cor:fpca2} below. Note that DWR \eqref{eq:fpca_dwr}  always yields finite optimal value and bounded optimal set.
						\begin{corollary}\label{cor:fpca2}
							For FPCA \eqref{eq:fpca}, suppose that (i) its  DWR  \eqref{eq:fpca_dwr} has an optimal solution with $\underline{m}\le2$-dimensional LMIs denoted  by $T\subseteq [m]$ and (ii) matrices  $\{\bm A_j-\Proj_{\H}(\bm A_j)\}_{[m]\setminus T}$  are parallel with the same direction, where set $\H:=\spa(\{\bm A_i\}_{i\in T})$.
							Then  its DWR  \eqref{eq:fpca_dwr}   admits the objective exactness.
						\end{corollary}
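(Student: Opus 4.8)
The plan is to derive this directly from the binding-LMI characterization in \autoref{them:bind}, viewing FPCA \eqref{eq:fpca} as an instance of RCOP \eqref{eq_rank} whose variable is the pair $(z,\bm X)$, whose domain set is $\Re\times\X$ with $\X=\{\bm X\in\S_+^n:\rank(\bm X)\le k,\,\|\bm X\|_2\le 1\}$, whose objective maximizes $z$ (equivalently minimizes $-z$), and whose LMIs are the one-sided constraints $z\le\langle\bm A_i,\bm X\rangle$ for $i\in[m]$, so that $b_i^l=-\infty$ and the standing hypothesis of \autoref{them:bind} is matched. First I would record the three DWR-side regularity conditions required by statement (b) of \autoref{them:bind}. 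Because $\bm X$ ranges over the compact set $\clconv(\X)$ and each $\bm A_i\in\S_+^n$, we have $\langle\bm A_i,\bm X\rangle\le k\|\bm A_i\|_2$, so $z$ is bounded above on the DWR feasible set; hence $\V_{\rel}>-\infty$, and since every optimal point has $z=\V_{\rel}$ fixed with $\bm X\in\clconv(\X)$ bounded, the optimal set is bounded. The binding LMIs at the postulated optimal solution are indexed by $T$ with $\underline{m}\le 2$, and condition (ii) of the corollary is exactly the parallelism requirement (iv) of \autoref{them:bind} with $\H=\spa(\{\bm A_i\}_{i\in T})$.

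The heart of the argument is verifying the inclusive-face condition, statement (a) of \autoref{them:bind}, for the product domain $\Re\times\X$ at the threshold dimension $\underline{m}\le 2$. Here I would use that the closed convex hull factorizes as $\clconv(\Re\times\X)=\Re\times\clconv(\X)$, and that every face of such a product is of the form $\Re\times F$ with $F$ a face of $\clconv(\X)$ (since $\Re$ has no proper face); consequently a face of $\Re\times\clconv(\X)$ of dimension at most $2$ forces $\dim(F)\le 1$. Then \autoref{lem:fpca}(ii) guarantees $F\subseteq\X$, whence $\Re\times F\subseteq\Re\times\X$. Thus every face of $\clconv(\Re\times\X)$ of dimension at most $\underline{m}\le 2$ lies inside the domain, which is precisely Part (a). Invoking the equivalence in \autoref{them:bind} then delivers $\V_{\opt}=\V_{\rel}$, the desired objective exactness.

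The step I expect to be the main obstacle is the line-freeness bookkeeping caused by the auxiliary variable $z$. The product hull $\Re\times\clconv(\X)$ does contain a line (the $z$-axis), so \autoref{them:bind} cannot be quoted with its domain hypothesis taken literally; I would instead note that its proof uses line-freeness only through the feasible set $\C_{\rel}$ and through the reduced set $\hat{\C}_{\rel}$ obtained by turning the binding LMIs into equalities. Both remain line-free in the FPCA setting: the recession cone of $\C_{\rel}$ is the ray $\{(z,\bm 0):z\le 0\}$ (as already observed in the proof of \autoref{cor:fpca}), and once at least one constraint binds---which always happens at a maximizer of $z$, so $T\neq\emptyset$---the variable $z$ is pinned to $\langle\bm A_i,\bm X\rangle$ for $i\in T$ and $\hat{\C}_{\rel}$ embeds into the bounded set $\clconv(\X)$. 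Confirming these two line-freeness facts together with $T\neq\emptyset$ at optimality is the only place where $z$ requires care; everything else is a direct specialization of \autoref{them:bind}.
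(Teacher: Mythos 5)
Your proposal is correct and follows the same route the paper intends: the paper states \autoref{cor:fpca2} without a written proof, deriving it directly from \autoref{them:bind} with the facial-inclusion input supplied by \autoref{lem:fpca}(ii), exactly as you do (including the verification that $\V_{\rel}>-\infty$ and the optimal set is bounded, which the paper asserts in the surrounding remark). Your additional care about the auxiliary variable $z$ --- observing that $\Re\times\clconv(\X)$ contains a line but that the proof of \autoref{them:bind} only needs line-freeness of $\C_{\rel}$ and of the reduced set $\hat{\C}_{\rel}$, both of which hold here once $T\neq\emptyset$ at a maximizer --- addresses a technicality the paper glosses over, and strengthens rather than departs from its argument.
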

						
						\noindent\textbf{Fair SVD (FSVD). }Another significant application of our relaxed condition in \autoref{them:bind} is the fair SVD (FSVD), which can be formulated as
						\begin{align}\label{eq:fsvd}
							\text{(FSVD)} \quad  \V_{\opt}:=  \max_{(z, \bm \X) \in \Re\times \X} \left\{z: z\le \langle  \bm A_i, \bm X \rangle, \forall i \in [m]  \right\},  \ \ \X:=\{\bm X\in \Re^{n\times p}: \rank(\bm X)\le k, ||\bm X||_2\le 1 \},
						\end{align}
						where  $\bm A_i \in \Re^{n\times p}$ denotes the data matrix of the $i$-th group for each  $i\in [m]$. Different from FPCA \eqref{eq:fpca}, the FSVD \eqref{eq:fsvd} aims to seek a fair representation learning of $m$ different data matrices that are non-symmetric. Similar to \autoref{lem:fpca} for FPCA \eqref{eq:fpca}, the next lemma characterizes the convex hull  and  facial inclusion property of  the domain set $\X$ in FSVD \eqref{eq:fsvd}. 
						\begin{lemma}\label{lem:fsvd}
							Suppose that domain set $\X:=\left\{\bm X\in \Re^{n\times p}: \rank(\bm X)\le k, ||\bm X||_2\le 1 \right\}$, then we have
							\begin{enumerate}[(i)]
								\item $\clconv(\X)=\conv(\X):=\left\{\bm X\in \Re^{n\times p}: ||\bm X||_* \le k, ||\bm X||_2\le 1 \right\}$, where $ ||\cdot||_* $ is the nuclear norm; and
								\item Any no larger than one-dimensional face of  $\clconv(\X)$ is contained in $\X$, i.e., $F \subseteq \X$ for all $F\in \F^{2}(\clconv(\X))$.
							\end{enumerate}
						\end{lemma}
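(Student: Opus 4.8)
The plan is to mirror the proof of the companion result for fair PCA (\autoref{lem:fpca}), replacing the eigendecomposition by a singular value decomposition and the trace by the nuclear norm. The one genuinely new difficulty is that the nuclear norm is not linear, so preserving it under a perturbation requires an extra idea not present in the fair-PCA argument.

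For Part (i), I would first note that $\X$ is compact, so $\clconv(\X)=\conv(\X)$. Writing any $\bm X\in\X$ through its SVD $\bm X=\bm U\diag(\bm\sigma)\bm V^{\top}$, the constraints $\rank(\bm X)\le k$ and $\|\bm X\|_2\le 1$ read exactly as $\|\bm\sigma\|_0\le k$ and $\|\bm\sigma\|_\infty\le 1$, whence $\|\bm X\|_*=\|\bm\sigma\|_1\le k$; since the set $\{\bm X:\|\bm X\|_*\le k,\ \|\bm X\|_2\le 1\}$ is convex this yields the easy inclusion $\conv(\X)\subseteq\{\bm X:\|\bm X\|_*\le k,\ \|\bm X\|_2\le 1\}$. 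For the reverse inclusion I would take any $\bm X$ with $\|\bm X\|_*\le k$ and $\|\bm X\|_2\le 1$, apply the vector identity $\conv\{\bm\lambda\in\Re_+^n:\|\bm\lambda\|_0\le k,\|\bm\lambda\|_\infty\le 1\}=\{\bm\lambda\in\Re_+^n:\|\bm\lambda\|_1\le k,\|\bm\lambda\|_\infty\le 1\}$ (from \cite{argyriou2012sparse}, used already in \autoref{lem:fpca}) to its singular-value vector $\bm\sigma$, and then conjugate each resulting sparse vector $\bm\lambda^{(j)}$ by the fixed $\bm U,\bm V$: each $\bm U\diag(\bm\lambda^{(j)})\bm V^{\top}$ has rank $\le k$ and spectral norm $\le 1$, hence lies in $\X$, and the same convex weights reproduce $\bm X$.

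For Part (ii) I would argue by contradiction exactly as in \autoref{lem:psd} and \autoref{lem:fpca}. Suppose a one-dimensional face $F$ of $\clconv(\X)$ is not contained in $\X$; pick $\hat{\bm X}\in F\setminus\X$ of rank $r>k$ and split its SVD as $\hat{\bm X}=\bm U_1\bm V_1^{\top}+\bm U_2\bm\Sigma_2\bm V_2^{\top}$, where $\bm U_1,\bm V_1$ collect the $r_1$ singular directions with value $1$ and $\bm\Sigma_2\in\S_{++}^{r_2}$ collects the $r_2$ fractional ones in $(0,1)$. From $\|\hat{\bm X}\|_*=r_1+\tr(\bm\Sigma_2)\le k<r=r_1+r_2$ a short counting argument gives $r_2\ge 2$: if $r_2=1$ then $r_1=r-1\ge k$, forcing $r_1+\sigma>k$ and contradicting $\|\hat{\bm X}\|_*\le k$. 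This reproduces the fair-PCA situation.

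The key step, and the main obstacle, is the perturbation. I want a nonzero direction $\bm U_2\bm\Delta\bm V_2^{\top}$ orthogonal to the single affine direction of $F$ such that $\hat{\bm X}\pm\delta\,\bm U_2\bm\Delta\bm V_2^{\top}\in\clconv(\X)$ for small $\delta>0$; by \autoref{def:face} this would force both points into $F$ and contradict the orthogonality. Unlike fair PCA, a generic traceless $\bm\Delta$ preserves $\|\cdot\|_*$ only to first order: for a traceless $2\times2$ $\bm\Delta$ one computes $\|\bm\Sigma_2\pm\delta\bm\Delta\|_*^2=(\sigma_1+\sigma_2)^2+\delta^2(\Delta_{12}-\Delta_{21})^2$, so any skew-symmetric component strictly inflates the nuclear norm at second order and can push it above $k$. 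I would resolve this by restricting $\bm\Delta$ to be symmetric and traceless, $\bm\Delta\in\S^{r_2}$ with $\tr\bm\Delta=0$: then $\bm\Sigma_2\pm\delta\bm\Delta$ remains symmetric positive definite for small $\delta$, so its nuclear norm equals its trace, which is exactly preserved, while its eigenvalues stay below $1$; since the block-$1$ singular values are untouched and the two singular subspaces are orthogonal, both $\|\hat{\bm X}\pm\delta\,\bm U_2\bm\Delta\bm V_2^{\top}\|_*\le k$ and $\|\cdot\|_2\le 1$ hold, placing the perturbed points in $\clconv(\X)$. Finally the dimension count closes the argument: the image space $\{\bm U_2\bm\Delta\bm V_2^{\top}:\bm\Delta\in\S^{r_2},\ \tr\bm\Delta=0\}$ has dimension $r_2(r_2+1)/2-1\ge 2$, which exceeds the single orthogonality constraint imposed by the one-dimensional face, so an admissible nonzero $\bm\Delta$ exists and the contradiction follows.
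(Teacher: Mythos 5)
Your proof is correct and follows the route the paper intends: the paper omits this proof entirely, deferring to \autoref{lem:fpca}, and you reconstruct that argument with the SVD in place of the eigen-decomposition. What you add is a genuinely necessary repair that a literal transcription of the fair-PCA proof would miss: in the non-symmetric setting an arbitrary perturbation $\bm U_2\bm\Delta\bm V_2^{\top}$ of the fractional block does \emph{not} preserve the nuclear norm (your $2\times 2$ computation showing that a skew-symmetric $\bm\Delta$ inflates $\|\cdot\|_*$ at second order is exactly right), so the perturbation must be taken symmetric and traceless; then $\bm\Sigma_2\pm\delta\bm\Delta$ stays positive definite, its nuclear norm equals its preserved trace, and the orthogonality of the two singular subspaces makes the block sum a valid SVD of the perturbed matrix. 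The dimension count $r_2(r_2+1)/2-1\ge 2$ for $r_2\ge 2$ then leaves a nonzero $\bm\Delta$ after imposing the single orthogonality constraint coming from a one-dimensional face, and the face-definition contradiction closes the argument as in \autoref{lem:psd}.

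One caveat: what you prove is the $\F^{1}$ statement (faces of dimension at most one), which matches the lemma's prose, mirrors Part (ii) of \autoref{lem:fpca}, and is all that \autoref{cor:fsvd} requires. The displayed $\F^{2}(\clconv(\X))$ in Part (ii) appears to be a typo and is in fact false as written: for $n=p=2$ and $k=1$ the set $\{\bm T\in \S_{+}^{2}:\tr(\bm T)=1\}$ is a two-dimensional exposed face of $\clconv(\X)=\{\bm X:\|\bm X\|_*\le 1\}$ (the maximizers of $\langle \bm I,\bm X\rangle$) that contains rank-two matrices such as $\bm I/2\notin\X$. Your argument correctly does not claim more than the one-dimensional case.
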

						\begin{proof}The proof is similar to \autoref{lem:fpca} and thus is omitted. \qed
						\end{proof}
						
						Note that the domain set $\X$ in FPCA \eqref{eq:fpca} contains any at most one-dimensional face in its convex hull, and we show that the convex hull exactness holds  when there are $\tilde m=2$ linearly independent groups in \autoref{cor:fpca}.  According to Part (ii) of \autoref{lem:fsvd}, 
						it is natural to extend the convex hull exactness for FSVD \eqref{eq:fsvd} up  to two linearly independent groups (i.e., $\tilde{m}\le 2$) as below.
						\begin{corollary}\label{cor:fsvd}
							For the FSVD \eqref{eq:fsvd} with $\tilde{m}\le 2$ linearly independent data matrices, its  DWR admits the convex hull  exactness.
						\end{corollary}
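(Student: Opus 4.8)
The plan is to follow the template established for Fair PCA in \autoref{cor:fpca}, transporting it to the rectangular setting via the facial characterization of the FSVD domain in \autoref{lem:fsvd}. First I would record the two feasible sets explicitly. For $\tilde m=2$ linearly independent data matrices $\bm A_1,\bm A_2\in\Re^{n\times p}$,
\[
\C:=\{(z,\bm X)\in\Re\times\Re^{n\times p}: z\le\langle\bm A_1,\bm X\rangle,\ z\le\langle\bm A_2,\bm X\rangle,\ \rank(\bm X)\le k,\ \|\bm X\|_2\le 1\},
\]
with $\C_{\rel}$ obtained by replacing the rank constraint by $\|\bm X\|_*\le k$ according to \autoref{lem:fsvd}(i).

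The structural point that makes the nonsymmetric case no harder than FPCA is that $\clconv(\X)=\{\bm X:\|\bm X\|_*\le k,\ \|\bm X\|_2\le 1\}$ is compact, hence line-free with trivial recession cone. Therefore the only unbounded direction of $\C_{\rel}$ comes from the free auxiliary variable $z$, so $\reccone(\C_{\rel})=\{(z,\bm 0):z\le 0\}=\reccone(\clconv(\C))$. Because the recession cones coincide and both sets are closed, convex, and line-free, the representation theorem (Theorem 18.5 in \cite{rockafellar2015convex}) reduces the target identity $\C_{\rel}=\clconv(\C)$ to extreme point exactness $\ext(\C_{\rel})\subseteq\C$, exactly the reduction used in \autoref{cor:fpca}.

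It then remains to verify extreme point exactness for an arbitrary extreme point $(\hat z,\hat{\bm X})$ of $\C_{\rel}$, which I would split by the number of binding LMIs. If only one LMI binds, say $\hat z=\langle\bm A_1,\hat{\bm X}\rangle<\langle\bm A_2,\hat{\bm X}\rangle$, a short perturbation of $\hat{\bm X}$ inside $\clconv(\X)$ keeps the second LMI slack and lifts through $z=\langle\bm A_1,\cdot\rangle$ to a segment in $\C_{\rel}$; so $\hat{\bm X}$ must be an extreme point of $\clconv(\X)$, i.e.\ a zero-dimensional face, which lies in $\X$ by \autoref{lem:fsvd}(ii). If both LMIs bind, then $\langle\bm A_1-\bm A_2,\hat{\bm X}\rangle=0$, and lifting any midpoint decomposition of $\hat{\bm X}$ through $z=\langle\bm A_1,\cdot\rangle$ shows $\hat{\bm X}$ is an extreme point of the single-LMI slice $\{\bm X\in\clconv(\X):\langle\bm A_1-\bm A_2,\bm X\rangle=0\}$; since $\bm A_1\neq\bm A_2$ this is a dimension-one LMI, so \autoref{cor_lem:genext} places $\hat{\bm X}$ in a face of $\clconv(\X)$ of dimension at most one, again contained in $\X$ by \autoref{lem:fsvd}(ii) together with \autoref{them:ext}. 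In either case $\hat{\bm X}\in\X$ and hence $(\hat z,\hat{\bm X})\in\C$, giving extreme point exactness and therefore convex hull exactness.

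The only real obstacle I anticipate is the bookkeeping around the auxiliary variable $z$: one must check that lifting a decomposition $\hat{\bm X}=\tfrac12(\bm X'+\bm X'')$ back to $(z,\bm X)$-space yields points genuinely in $\C_{\rel}$, which requires the binding LMIs to hold with equality at $\bm X',\bm X''$ in the two-binding case and slackness to persist locally in the one-binding case. Once this $z$-elimination is handled, the nonsymmetry of $\Re^{n\times p}$ contributes nothing new, since the entire facial geometry of the nuclear/spectral-norm ball has already been absorbed into \autoref{lem:fsvd}; I would also note that only the $\F^{1}$ containment there is actually invoked, so the argument covers every $\tilde m\le 2$.
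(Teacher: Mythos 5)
Your proposal is correct and is essentially the paper's own argument: the paper proves \autoref{cor:fsvd} by citing \autoref{lem:fsvd} together with the proof of \autoref{cor:fpca}, and your write-up simply spells out that FPCA template (matching recession cones $\{(z,\bm 0):z\le 0\}$, then the one-binding/two-binding case split reducing to zero- and one-dimensional faces of $\clconv(\X)$) in the rectangular setting. Your observation that only the $\F^{1}$ containment from \autoref{lem:fsvd} is actually needed is also consistent with how the paper uses that lemma.
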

						\begin{proof}
							The proof directly follows from  \autoref{lem:fsvd} and \autoref{cor:fpca}. \qed
						\end{proof}
						
						We remark that this is the first-known convex hull exactness result for FSVD  \eqref{eq:fsvd} when there are two groups, and this result can be further extended to objective exactness when its DWR has at most $\underline{m}=2$-dimensional  binding LMIs.
						\begin{corollary}\label{cor:fsvd2}
							For FPCA \eqref{eq:fsvd}, suppose that (i) its  DWR has an optimal solution with  $\underline{m}\le 2$-dimensional   binding LMIs denoted by $T\subseteq [m]$, and (ii) matrices  $\{\bm A_j-\Proj_{\H}(\bm A_j)\}_{[m]\setminus T}$  are parallel with the same direction with $\H$ denoting the space spanned by binding data matrices. Then  its DWR admits the objective exactness.
						\end{corollary}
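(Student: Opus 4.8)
The plan is to treat \autoref{cor:fsvd2} as the rectangular-matrix analogue of \autoref{cor:fpca2} and to run the same argument, with \autoref{lem:fsvd} playing the role that \autoref{lem:fpca} plays for fair PCA. First I would recast the maximization in FSVD \eqref{eq:fsvd} as a minimization $\min\{-z\}$ so that it fits the one-sided RCOP template of \autoref{them:bind}, reading each group constraint $z\le\langle\bm A_i,\bm X\rangle$ as a one-sided LMI in the variable $(z,\bm X)$. Because the domain set $\X=\{\bm X\in\Re^{n\times p}:\rank(\bm X)\le k,\ \|\bm X\|_2\le 1\}$ is compact, its closed convex hull $\clconv(\X)$ is compact; consequently $\langle\bm A_i,\bm X\rangle\le\|\bm A_i\|_*\|\bm X\|_2$ is bounded, so $\V_{\rel}$ is finite, and since $\bm X$ is confined to a bounded set and $z$ is pinned to the optimal value at every optimal solution, the DWR optimal set is bounded. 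Thus the two background hypotheses of statement (b) of \autoref{them:bind}, namely $\V_{\rel}>-\infty$ and boundedness of the optimal set, hold automatically and need not be imposed, while assumptions (i) and (ii) of the corollary are, verbatim, hypotheses (iii) and (iv) of that statement.

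The decisive structural input is Part (ii) of \autoref{lem:fsvd}, $F\subseteq\X$ for all $F\in\F^{2}(\clconv(\X))$, which matches the binding dimension $\underline m\le 2$ in condition (a) of \autoref{them:bind}. The one point of care is the auxiliary variable $z$: the literal RCOP domain is $\Re\times\X$, whose convex hull $\Re\times\clconv(\X)$ carries the line $\{(z,\bm 0):z\in\Re\}$ and so is not line-free. I would dispose of this exactly as in the (proved) \autoref{cor:fpca}: the feasible set $\C_{\rel}$ itself is line-free, since $\reccone(\C_{\rel})=\{(z,\bm 0):z\le 0\}$ is only a half-line, and the machinery in the proof of \autoref{them:bind} genuinely uses line-freeness of $\C_{\rel}$ and of the reduced binding set $\hat{\C}_{\rel}$, not of the ambient domain. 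Equivalently, one may pass to the reformulation $\max_{j}\max_{\bm X\in\X}\{\langle\bm A_j,\bm X\rangle:\langle\bm A_j-\bm A_i,\bm X\rangle\le 0,\ \forall i\}$, whose domain is the compact line-free $\X$ and whose objective is linear in $\bm X$. In either case the binding LMIs pin $z=\langle\bm A_i,\bm X\rangle$ for $i\in T$, the binding system collapses to an at most two-dimensional LMI system in $\bm X$ over $\clconv(\X)$, and the optimal extreme point of $\hat{\C}_{\rel}$ then lies in a face of $\F^{2}(\clconv(\X))\subseteq\X$, hence in $\C$, giving $\V_{\opt}=\V_{\rel}$.

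The main obstacle is not the transfer but establishing the single genuinely new fact it rests on, \autoref{lem:fsvd}(ii): that every low-dimensional face of $\clconv(\X)=\{\bm X\in\Re^{n\times p}:\|\bm X\|_*\le k,\ \|\bm X\|_2\le 1\}$ consists of matrices of rank at most $k$. This must be re-derived in the singular-value setting rather than the eigenvalue setting of \autoref{lem:psd} and \autoref{lem:fpca}: given a face containing a matrix $\hat{\bm X}$ of rank $r>k$, one takes a compact SVD, isolates the block of singular values lying strictly inside $(0,1)$, and perturbs within that block along a direction orthogonal to the (low-dimensional) affine hull of the face, producing two feasible points whose midpoint is $\hat{\bm X}$ and contradicting the face definition in \autoref{def:face}. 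The remaining care is purely bookkeeping: matching ``$\underline m\le 2$ binding LMIs'' in the $z$-augmented formulation to the face dimension used from \autoref{lem:fsvd}, and verifying that the parallel-projection assumption (ii) is invoked precisely as in \autoref{claim} inside the proof of \autoref{them:bind}, so that no extreme direction of $\hat{\C}_{\rel}$ can be improving for one non-binding LMI while violating another, which is what permits pushing the optimum to an extreme point of $\hat{\C}_{\rel}$ feasible for all $m$ constraints.
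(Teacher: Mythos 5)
Your proposal is correct and follows essentially the same route the paper intends: the corollary is an immediate application of \autoref{them:bind} with the facial-inclusion input supplied by \autoref{lem:fsvd}, exactly paralleling \autoref{cor:fpca2}, with finiteness of $\V_{\rel}$ and boundedness of the optimal set automatic from compactness of $\clconv(\X)$. Your extra care about the line in $\Re\times\clconv(\X)$ introduced by the auxiliary variable $z$ (resolved via line-freeness of $\C_{\rel}$ itself, as in \autoref{cor:fpca}) and about rederiving \autoref{lem:fsvd}(ii) in the singular-value setting addresses details the paper leaves implicit, but does not change the argument.
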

						
						\subsection{Objective Exactness Under Setting (IV): Relaxed Simultaneously Necessary and Sufficient Condition based on the  Nonzero Optimal Lagrangian Multipliers}\label{sec:dual}
						The previous subsection shows that whether the DWR  \eqref{eq_rel_rank} achieves objective exactness highly depends on its binding LMIs. 
						Motivated by the fact that binding LMIs may also have zero-value Lagrangian multipliers, this subsection  further relaxes the simultaneously necessary and sufficient  condition for objective exactness  by leveraging the Lagrangian multipliers of DWR \eqref{eq_rel_rank}. This result allows us the flexibility to cover and generalize the objective exactness results for more applications present in the literature. Analogous to the previous subsection, we still focus on the  one-sided LMIs in RCOP \eqref{eq_rank}.
						
						We first show an example of the DWR in which the objective exactness holds, and the dimension of  binding constraints is strictly larger than the number of nonzero optimal Lagrangian multipliers, i.e., $\underline m > \underline m^*$. More importantly, in this example, both conditions in \autoref{them:exactobj} and \autoref{them:bind} fail to cover the objective exactness of the DWR. This motivates us to further relax the conditions from the perspective of Lagrangian multipliers.
						\begin{example} \label{eg:sd2qcqp} \rm
							Using the domain set $\X:= \{\bm X\in \S_+^2: \rank(\bm X) \le 1, X_{12}=0\}$ and its closed convex hull  $\clconv(\X):=\{\bm X\in \S_+^2: X_{12}=0\}$ same as \autoref{eg:ray}, we consider  $m=\tilde m=2$ LMIs: $X_{11}\le X_{22}, 2X_{11}\le X_{22}$. Hence, we have sets $\C$ and $\C_{\rel}$ defined as 
							\[ \C=\{ \bm X\in \X:  X_{11}\le X_{22},  2X_{11}\le X_{22}  \},   \ \  \C_{\rel}=\{ \bm X \in \clconv(\X):  X_{11}\le X_{22},  2X_{11}\le X_{22} \},\]
							where set $\C$ in this example is the vertical red solid line as shown in Figure \ref{sd2c}, and set $\C_{\rel}$ is presented in red shadow area in Figure \ref{sd2crel}. Note that both sets are unbounded.
							
							
							If we set the objective function of the DWR to be $X_{22}$, then the objective exactness holds, i.e., $\V_{\rel}=\V_{\opt}=0$ with the same optimal  point $a_1$. 
							It is seen that the point $a_1$ falls on $\underline m:=2$-dimensional   binding LMIs in Figure \ref{sd2crel}. However, \autoref{them:exactobj} and \autoref{them:bind}  cannot be used to show the objective exactness in this example   as the two-dimensional face of set $\clconv(\X)$ is not contained in $\X$. On the other hand, there is an optimal dual  solution (i.e., $\bm \mu^*=(0, 1)^\top $) of the DWR that has only $\underline m^*:=1$  nonzero Lagrangian multiplier. 
							In fact, we show that the smallest number of nonzero optimal Lagrangian multipliers is upper bounded by the dimension of binding LMIs. Therefore, to leverage the number of nonzero optimal Lagrangian multipliers,  we are motivated to derive another simultaneously necessary and sufficient condition for objective exactness, 
							further relaxing the one based on binding LMIs in \autoref{them:bind}.
							\qedA
						\end{example}
						
						\begin{figure}[htbp]
							\centering
							\vskip -0.2in
							\subfigure[$\C=\clconv(\C)$ ]{
								{\includegraphics[width=0.27\columnwidth]{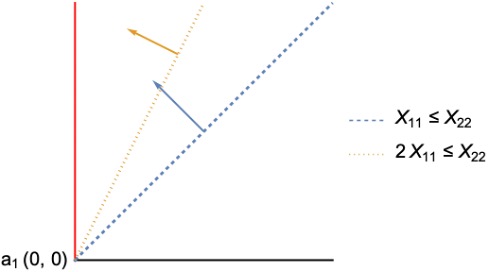}}	\label{sd2c}}
							\hspace{6em}
							\subfigure[$\C_{\rel}$ ]{
								{\includegraphics[width=0.27\columnwidth]{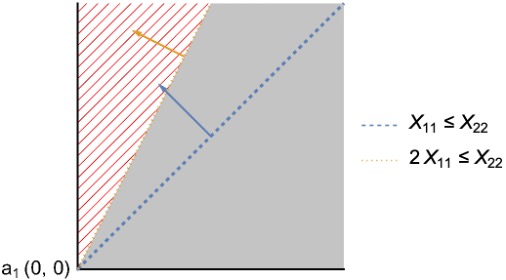}}	\label{sd2crel}}
							\caption{Illustration of Sets in \autoref{eg:sd2qcqp}  and $\C_{\rel} \neq \clconv(\C)$.  }
							\label{fig:sd2}
							\vskip -0.1in
						\end{figure}


						Next we give a geometric interpretation of the Lagrangian multipliers  of DWR  by the normal cone of set $\C_{\rel}$. 
						
						\begin{definition}[Normal Cone]  \label{def:norm}
							Let $D\subseteq \Re^n$ be a closed convex set. The normal cone of set $D$ at point $\bm x$ is defined by
							$\N_{D}(\bm x) = \left\{\bm g \in \Re^n:  \bm g^{\top}(\bm y-\bm x)\le 0, \forall  \bm y \in D \right\}$. 
						\end{definition}
						
						\begin{proposition} \label{them:opt}
							Given a nonempty closed  domain set $\X$ with its closed convex hull $\clconv(\X)$ being line-free and $\V_{\rel} >-\infty$    in DWR \eqref{eq_rel_rank}, then the followings  hold:
							\begin{enumerate}[(i)]
								\item 	A  feasible solution $\bm X^* \in \C_{\rel}$ is  optimal to DWR \eqref{eq_rel_rank} if and only if  $-\bm A_0 \in \N_{\C_{\rel}}(\bm X^*)$;
								
								\item Suppose that   $b_i^l=-\infty$ for all $i\in [m]$ in RCOP \eqref{eq_rank},
								and DWR \eqref{eq_rel_rank} satisfies the relaxed Slater condition, i.e.,  $\C_{\rel}\cap \ri(\clconv(\X)) \neq \emptyset$.  Then for an optimal solution $\bm X^*$ to DWR with binding LMIs from $T^*\subseteq [m]$,  there  exist  Lagrangian multipliers $\bm \mu^* \in \Re_+^{m}$ such that $\supp(\bm \mu^*)\subseteq T^*$ and
								\[ -\bm A_0 - \sum_{i\in T^*} \mu^*_i \bm A_i \in \N_{\clconv(\X)}(\bm X^*), \]
								where  $\N_{\clconv(\X)}(\bm X^*)$ denotes the normal cone of  set $\clconv(\X)$ at an optimal solution $\bm X^*$. 
							\end{enumerate}
						\end{proposition}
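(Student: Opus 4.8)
The plan is to handle the two parts in turn. Part (i) is simply the first-order optimality condition for minimizing a linear functional over a convex set, while Part (ii) follows by decomposing the normal cone of the intersection $\C_{\rel}=\clconv(\X)\cap \P$, where $\P:=\{\bm X\in\Q:\langle\bm A_i,\bm X\rangle\le b_i^u,\forall i\in[m]\}$ denotes the polyhedral LMI system (recall $b_i^l=-\infty$ for all $i$ in this setting).

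For Part (i), I would observe that since DWR \eqref{eq_rel_rank} minimizes the linear function $\langle\bm A_0,\bm X\rangle$ over the closed convex set $\C_{\rel}$, a feasible $\bm X^*$ is optimal if and only if the objective does not decrease along any feasible direction, that is $\langle\bm A_0,\bm X-\bm X^*\rangle\ge 0$ for every $\bm X\in\C_{\rel}$. Rewriting this as $\langle-\bm A_0,\bm X-\bm X^*\rangle\le 0$ for all $\bm X\in\C_{\rel}$ and comparing with \autoref{def:norm} yields exactly $-\bm A_0\in\N_{\C_{\rel}}(\bm X^*)$; the linearity of the objective makes both implications immediate, so no subgradient machinery is needed.

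For Part (ii), I would start from $-\bm A_0\in\N_{\C_{\rel}}(\bm X^*)$ supplied by Part (i) and split this normal cone using the sum rule $\N_{\C_{\rel}}(\bm X^*)=\N_{\clconv(\X)}(\bm X^*)+\N_{\P}(\bm X^*)$. Since $\P$ is polyhedral, this identity holds under the weaker qualification $\ri(\clconv(\X))\cap\P\neq\emptyset$, which is precisely the relaxed Slater condition $\C_{\rel}\cap\ri(\clconv(\X))\neq\emptyset$ assumed here. I would then invoke the standard description of the normal cone of a polyhedron as the conic hull of its binding constraint gradients, $\N_{\P}(\bm X^*)=\{\sum_{i\in T^*}\mu_i\bm A_i:\mu_i\ge 0\}$, with $T^*$ indexing the binding LMIs at $\bm X^*$. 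Combining the two, there exist $\mu_i^*\ge 0$ for $i\in T^*$ with $-\bm A_0-\sum_{i\in T^*}\mu_i^*\bm A_i\in\N_{\clconv(\X)}(\bm X^*)$; extending $\bm\mu^*$ by zeros on $[m]\setminus T^*$ gives $\supp(\bm\mu^*)\subseteq T^*$ and the claimed inclusion.

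The main obstacle is justifying the normal-cone sum rule at the correct level of generality. Because $\clconv(\X)$ is typically not polyhedral, I cannot appeal to the purely polyhedral version of the rule; instead I must use the mixed refinement (see \cite{rockafellar2015convex}) in which only the non-polyhedral set $\clconv(\X)$ is required to have its relative interior meet the polyhedron $\P$. Verifying that the stated relaxed Slater condition delivers exactly this qualification, and that line-freeness of $\clconv(\X)$ together with $\V_{\rel}>-\infty$ precludes degeneracies so that the multipliers are finite, is where the care lies; the identification of $\N_{\P}(\bm X^*)$ with the binding-gradient cone is routine for polyhedra and I would cite rather than rederive it.
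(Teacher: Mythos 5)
Your proposal is correct and follows essentially the same route as the paper: Part (i) is the standard normal-cone optimality condition for a linear objective over a closed convex set, and Part (ii) decomposes $\N_{\C_{\rel}}(\bm X^*)$ as the sum of $\N_{\clconv(\X)}(\bm X^*)$ and the normal cone of the polyhedral LMI system under the relaxed Slater condition, the latter being the conic hull of the binding $\bm A_i$. The only cosmetic difference is the reference for the sum rule (the paper cites \cite{burachik2005simple} where you invoke the polyhedral refinement from \cite{rockafellar2015convex}), and your explicit check that the relaxed Slater condition coincides with the required qualification is a welcome bit of extra care.
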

						\begin{proof}
							\noindent \textbf{Part (i).}	Since the feasible set $\C_{\rel}$ of the DWR is closed convex, then the optimality condition is  $\langle\bm A_0 , {\bm X^*} \rangle \le \langle\bm A_0 , \bm X \rangle$ for any $\bm X \in \C_{\rel}$, which is equivalent to $-\bm A_0 \in \N_{\C_{\rel}}(\bm X^*)$ according to \autoref{def:norm} of the normal cone.
							
							\noindent\textbf{Part (ii).} Since DWR \eqref{eq_rel_rank} satisfies the relaxed Slater condition, i.e.,  $\C_{\rel}\cap \ri(\clconv(\X)) \neq \emptyset$, the normal cone of the intersection set $\C_{\rel}$ is equal to the intersection of normal cones \citep{burachik2005simple}. Therefore,  there exists $\bm \mu^* \in \Re_+^m$ such that
							\begin{align*}
								-\bm A_0 \in \N_{\C_{\rel}}(\bm X^*) =  \N_{\clconv(\X)\cap \{\bm{X}\in \Q: \langle \bm A_i,\bm{X}\rangle\leq b_i^u,\forall i\in [m]\}}(\bm X^*) =  \sum_{i\in T^*} \mu_i^* \bm A_i + \N_{\clconv(\X)}(\bm X^*).
							\end{align*}
							Note that for each $i\in [m]\setminus T^*$ non-binding LMI, we have  $\mu^*_i=0$. This completes the proof.
							\qed
						\end{proof}
						
						We  remark that the relaxed Slater condition in \autoref{them:opt} is used to provide an explicit description of the normal cone of the intersection set $\C_{\rel}$, and this condition can be further relaxed (see, e.g.,  \cite{burachik2005simple}), which is omitted in this paper due to page limit. Part (ii) of \autoref{them:opt} implies that $\underline m^*\le \underline  m$, which enables us to improve the condition in \autoref{them:bind}. Before that, we use \autoref{them:opt} to derive the objective exactness for several QCQP examples below.

						\noindent{\bf One-sided QCQP. } To exploit \autoref{them:opt}, we study the one-sided QCQP \eqref{eq_qcqp}, where we let $b_i^l =-\infty$ for all $i\in [m]$.
							Recent studies reveal the DWR of one-sided QCQP can achieve objective exactness when the matrix coefficients $\{\bm A_0\}\cup \{\bm A_i\}_{i\in [m]}$ exhibit favorable properties. 
							For instance, \cite{kim2003exact} proved that $\V_{\rel}=\V_{\opt}>-\infty$ when all the off-diagonal elements of  matrices $\bm  A_0$ and $\bm A_1, \cdots, \bm A_m$ are nonpositive. Mapping the matrix coefficients of the one-sided QCQP  into an undirected graph $\G$ that there is an edge in $\G$ if any of $(A_0)_{ij}$ and $(A_1)_{ij}, \cdots, (A_m)_{ij}$ is nonzero, the work \citep{sojoudi2014exactness} generalized  \cite{kim2003exact} and proved that $\V_{\rel}=\V_{\opt}>-\infty$ when all the off-diagonal elements of matrices $\bm  A_0$ and $\bm A_1, \cdots, \bm A_m$ are \textit{sign-definite}, i.e.,
							for any pair $(i,j)$ with $i\neq j$, $(A_0)_{ij}$ and $(A_1)_{ij}, \cdots, (A_m)_{ij}$ are either all nonpositive or all nonnegative, and the signs of  matrices further satisfy
							\[ \prod_{i\in [|S|]} \sigma_{S(i),S(i+1)} = (-1)^{|S|},  \ \ \forall S\text{ is a cycle of graph $\G$},  \]
							where we let $S(i)$ denote the $i$th node
							in the cycle $S$, let $S(|S|+1)=S(1)$, and let $\sigma_{ij}=-1$ if $(A_0)_{ij}, (A_1)_{ij}, \cdots, (A_m)_{ij}$ are all nonpositive and $\sigma_{ij}=1$ if $(A_0)_{ij}, (A_1)_{ij}, \cdots, (A_m)_{ij} $ are all positive.
							In recent follow-up works \citep{burer2020exact, kilincc2021exactness}, the authors reproved the result in
							\cite{sojoudi2014exactness} from different angles when applying to the one-sided  diagonal QCQP. Our following theorem  provides a  unified analysis of these cases that achieve objective exactness  using \autoref{them:opt}. 
							
							\begin{restatable}{corollary}{cordiag}\label{cor:diag}
								Suppose that $b_i^l =-\infty$ for all $i\in [m]$ in	QCQP \eqref{eq_qcqp}. Then its DWR admits the objective exactness for any linear objective function such that $\V_{\rel}>-\infty$ and any $m$ LMIs of dimension $\tilde m$
								when  any of the following conditions holds:
								\begin{enumerate}[(i)]
									\item All the off-diagonal elements of  matrices $\bm  A_0, \bm A_1, \cdots, \bm A_m$ are sign-definite, and for each cycle $S$ in graph $\G$, we have $ \prod_{i\in [|S|]} \sigma_{S(i),S(i+1)} = (-1)^{|S|}$ with $S(|S|+1)=S(1)$;
									\item  Matrices $\bm  Q_0, \bm Q_1, \cdots, \bm Q_m$  are diagonal and vectors $\bm  q_0, \bm q_1, \cdots, \bm q_m$  are sign-definite.
								\end{enumerate}
								Note that  $\{\bm  Q_0\}\cup \{\bm Q_i\}_{i\in [m]}$ are symmetric, and $\bm A_i  = \begin{pmatrix}
									0 & \bm q_i^{\top}/2\\
									\bm q_i/2 & \bm Q_i
								\end{pmatrix}$ for each $i\in \{0\}\cup [m]$. 
							\end{restatable}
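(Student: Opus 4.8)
The plan is to read off the optimality conditions of the DWR from \autoref{them:opt} and then convert the combinatorial sign hypotheses into a spectral statement about the aggregated (dual slack) matrix, from which a rank-one optimal solution can be extracted. First I would record that for the QCQP domain set $\X=\{\bm X\in\S_+^{n+1}:\rank(\bm X)\le1\}$ we have $\clconv(\X)=\S_+^{n+1}$ (\autoref{lem:psd} with $k=1$), so the normal cone appearing in \autoref{them:opt} is the normal cone of the positive semidefinite cone. Applying Part~(ii) of \autoref{them:opt} to an optimal DWR solution $\bm X^*$, there are multipliers $\bm\mu^*\in\Re_+^m$ supported on the binding set $T^*$, together with a free multiplier $\nu$ for $X_{11}=1$, such that the dual slack $\bm S^*:=\bm A_0+\sum_{i\in T^*}\mu_i^*\bm A_i-\nu\bm E_{11}$ satisfies $\bm S^*\succeq\bm 0$ and the complementarity $\bm S^*\bm X^*=\bm 0$, where $\bm E_{11}$ has a single $1$ in entry $(1,1)$; since $X^*_{11}=1$ this forces $\bm S^*$ to be singular, with the range of $\bm X^*$ contained in $\ker(\bm S^*)$.

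Next I would translate conditions (i) and (ii) into a sign-normalization of $\bm S^*$. Under condition~(i), sign-definiteness means each off-diagonal pair $(j,k)$ carries one fixed sign $\sigma_{jk}$ shared by all of $\bm A_0,\dots,\bm A_m$, and the cycle identity $\prod_{i\in[|S|]}\sigma_{S(i),S(i+1)}=(-1)^{|S|}$ is exactly the signed-graph balance condition guaranteeing a signature matrix $\bm D=\Diag(\bm d)$, $\bm d\in\{\pm1\}^{n+1}$, for which every $\bm D\bm A_i\bm D$, and hence $\bm D\bm S^*\bm D$, has nonpositive off-diagonal entries. Condition~(ii) is the acyclic special case: since $\bm Q_i$ are diagonal, the only off-diagonals of $\bm A_i=\bigl(\begin{smallmatrix}0&\bm q_i^\top/2\\ \bm q_i/2&\bm Q_i\end{smallmatrix}\bigr)$ lie in the first row/column and equal $(q_i)_j/2$, whose signs are fixed across $i$ by sign-definiteness of the $\bm q_i$; this is a star graph centered at node $1$, a tree, so the cycle condition is vacuous and a consistent signing always exists. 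In either case $\bm T:=\bm D\bm S^*\bm D\succeq\bm 0$ is a singular symmetric matrix with nonpositive off-diagonals, i.e. a (possibly reducible) singular symmetric M-matrix.

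I would then invoke the Perron--Frobenius property of such matrices: $\ker(\bm T)$ contains a nonnegative vector $\bm w\ge\bm 0$, and, handling the reducible case component-wise over the connected components of $\G$, one can arrange $w_1>0$. Undoing the signing, $\bm v:=\bm D\bm w\in\ker(\bm S^*)$ is sign-aligned with $\bm d$, and I would set $\hat{\bm X}:=\bm v\bm v^\top/v_1^2$, a rank-one positive semidefinite matrix with $\hat X_{11}=1$, so $\hat{\bm X}\in\X$. Complementarity $\bm S^*\hat{\bm X}=\bm 0$ is automatic because $\bm v\in\ker(\bm S^*)$; the alignment $d_j v_j\ge0$ together with the nonpositive off-diagonals of $\bm D\bm A_i\bm D$ lets me compare $\langle\bm A_i,\hat{\bm X}\rangle=\bm w^\top(\bm D\bm A_i\bm D)\bm w$ with $\langle\bm A_i,\bm X^*\rangle$ to verify both primal feasibility $\langle\bm A_i,\hat{\bm X}\rangle\le b_i^u$ and preservation of the active constraints indexed by $T^*$. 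Feasibility, complementarity, and this binding-preservation together force $\langle\bm A_0,\hat{\bm X}\rangle=\V_{\rel}$; since $\hat{\bm X}$ is rank one it is RCOP-feasible, whence $\V_{\opt}\le\langle\bm A_0,\hat{\bm X}\rangle=\V_{\rel}\le\V_{\opt}$ and objective exactness follows.

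The main obstacle is the middle stage: showing rigorously that the cycle identity yields a globally consistent signature $\bm D$ (a balance argument for signed graphs) and then invoking the precise Perron--Frobenius statement for a singular M-matrix that may be reducible, while guaranteeing $w_1>0$ after normalization. Equally delicate is the final feasibility verification, ensuring the sign-alignment operation neither violates a non-binding constraint nor slackens a binding one, which is what pins the value at $\V_{\rel}$ rather than merely bounding it. A secondary technical point is dispensing with the relaxed Slater hypothesis inherited from \autoref{them:opt}, which I would address by perturbing the right-hand sides $b_i^u$ and passing to the limit.
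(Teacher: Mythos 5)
Your route is genuinely different from the paper's: you build a global dual slack certificate $\bm S^*=\bm A_0+\sum_i\mu_i^*\bm A_i-\nu\bm E_{11}\succeq\bm 0$, sign-normalize it into a singular symmetric M-matrix via a signed-graph balance argument, and extract a rank-one candidate from a Perron vector of its kernel. The paper instead works locally: it picks a $2\times 2$ principal submatrix witnessing $\rank(\bm X^*)>1$, passes to a truncated DWR on that block (where the submatrix is automatically in the interior of $\S_+^2$, so the normal-cone term vanishes and no global Slater condition is ever needed), and adjusts only the off-diagonal entry to $-\sigma_{12}\sqrt{X^*_{11}X^*_{22}}$ while keeping the diagonal fixed. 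That entrywise adjustment makes every $\langle\bm A_i,\cdot\rangle$ (including $i=0$) weakly decrease, so feasibility and optimality are preserved for free, and iterating plus the cycle condition yields a rank-one point.

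The decisive gap in your version is the last stage. Setting $\hat{\bm X}=\bm v\bm v^\top/v_1^2$ with $\bm v\in\ker(\bm S^*)$ gives complementarity, hence $\langle\bm A_0,\hat{\bm X}\rangle=\V_{\rel}+\sum_{i}\mu_i^*\bigl(b_i^u-\langle\bm A_i,\hat{\bm X}\rangle\bigr)$. If $\hat{\bm X}$ is feasible this yields $\langle\bm A_0,\hat{\bm X}\rangle\ge\V_{\rel}$, which is the wrong direction; to pin the value at $\V_{\rel}$ you must additionally show that every constraint with $\mu_i^*>0$ is exactly binding at $\hat{\bm X}$, and you must separately establish feasibility of the non-binding constraints. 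Neither follows from your construction: the diagonal of $\hat{\bm X}$ is $w_j^2/w_1^2$, which bears no relation to the diagonal of $\bm X^*$, and the diagonal entries of the $\bm A_i$ (the entries of $\bm Q_i$, which are merely symmetric) carry no sign restriction, so the comparison of $\langle\bm A_i,\hat{\bm X}\rangle$ with $\langle\bm A_i,\bm X^*\rangle$ cannot be controlled; the sign-alignment only handles the off-diagonal contributions. A secondary gap is the Slater hypothesis you inherit from \autoref{them:opt}: perturbing the right-hand sides $b_i^u$ does not in general produce a feasible point in $\ri(\S_{+}^{n+1})$ (e.g., if the LMIs confine $\C_{\rel}$ to a proper face of the cone), so the limiting argument is not justified as stated. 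Both issues are exactly what the paper's local truncation-and-adjustment scheme is designed to sidestep, so I would either switch to that scheme or replace your kernel-vector candidate by the fixed-diagonal adjustment $\hat X_{jk}=-\sigma_{jk}\sqrt{X^*_{jj}X^*_{kk}}$, for which feasibility and non-increase of the objective are immediate.
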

							\begin{proof}
								See Appendix \ref{proof:cordiag}. \qed
							\end{proof}

							Finally, we conclude this subsection by showing a relaxed simultaneously necessary and sufficient condition for the DWR objective exactness  by analyzing the number of its  nonzero optimal Lagrangian multipliers.
							\begin{theorem}\label{them:dual}
								Given a nonempty closed  domain set $\X$ with its closed convex hull $\clconv(\X)$ being line-free and $b_i^l =-\infty$ for any $i\in [m]$ in RCOP \eqref{eq_rank}. Then the followings are equivalent.
								\begin{enumerate}[(a)]
									\item \textbf{Inclusive Face:} Any no larger than $\underline{m}^*$-dimensional face of set $\clconv(\X)$ is contained in the domain set $\X$, i.e., $F \subseteq \X$ for all $F\in \F^{\underline{m}^*}(\clconv(\X))$; 
									\item \textbf{Objective Exactness:} The DWR  \eqref{eq_rel_rank} has the same optimal value as problem \eqref{eq_rank} (i.e., $\V_{\opt}=\V_{\rel}$) for any linear objective function and $m$ LMIs such that (i) the relaxed Slater condition holds, i.e.,  $\C_{\rel}\cap \ri(\clconv(\X)) \neq \emptyset$, (ii) $\V_{\rel}>-\infty$, (iii) the optimal set of DWR \eqref{eq_rel_rank} is bounded, (iv) there are $\underline{m}^*$ nonzero optimal Lagrangian multipliers indexed by set $T\subseteq [m]$ corresponding to the DWR, and (v) matrices $\{ \bm A_j - \Proj_{\H}(\bm A_j) \}_{j \in [m]\setminus T}$ are parallel with the same direction with $\H:= \spa( \{\bm A_i\}_{i\in T})$.
								\end{enumerate}
							\end{theorem}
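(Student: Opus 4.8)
The plan is to mirror the two-part structure of \autoref{them:bind}, but to replace the index set of \emph{binding} LMIs by the support $T$ of the nonzero optimal Lagrangian multipliers, using \autoref{them:opt} as the bridge between the two. The payoff is that $\underline{m}^{*}\le \underline{m}$ (this inequality is exactly what Part (ii) of \autoref{them:opt} delivers), so the inclusive-face requirement is imposed only up to dimension $\underline{m}^{*}$, strictly weakening the hypothesis of \autoref{them:bind} while keeping objective exactness.

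For the direction (a) $\Longrightarrow$ (b), I would start from an optimal solution $\bm X^{*}$ of DWR \eqref{eq_rel_rank} and, by Part (ii) of \autoref{them:opt}, produce multipliers $\bm\mu^{*}\in\Re_{+}^{m}$ with $\supp(\bm\mu^{*})\subseteq T$ and $-\bm A_{0}-\sum_{i\in T}\mu_{i}^{*}\bm A_{i}\in\N_{\clconv(\X)}(\bm X^{*})$. This normal-cone inclusion, together with $\mu_{i}^{*}\ge 0$ and complementary slackness on $T$, is precisely the (convex) optimality condition for the \emph{reduced} DWR that keeps only the LMIs indexed by $T$; hence $\bm X^{*}$ is optimal for $\min\{\langle\bm A_{0},\bm X\rangle:\bm X\in\clconv(\X),\ \langle\bm A_{i},\bm X\rangle\le b_{i}^{u}\ \forall i\in T\}$, whose binding LMIs have dimension at most $\underline{m}^{*}$ (a minimal support makes $\{\bm A_{i}\}_{i\in T}$ linearly independent, so this dimension equals $\underline m^*$; otherwise it is even smaller). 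From here I replay the Case~1/Case~2 analysis of \autoref{them:bind}: decompose $\bm X^{*}$ over $\hat{\C}_{\rel}:=\{\bm X\in\clconv(\X):\langle\bm A_{i},\bm X\rangle=b_{i}^{u},\ \forall i\in T\}$ via the representation theorem (Theorem 18.5 in \cite{rockafellar2015convex}) into extreme points, which lie in $\F^{\underline{m}^{*}}(\clconv(\X))$ by \autoref{cor_lem:genext}, plus extreme directions. The parallelism assumption (v) on $\{\bm A_{j}-\Proj_{\H}(\bm A_{j})\}_{j\in[m]\setminus T}$ lets me invoke the sign dichotomy of \autoref{claim} (now for the \emph{zero-multiplier} LMIs rather than merely the non-binding ones), so that the boundedness of the optimal set forces an optimal extreme point $\bm X_{l^{*}}$ that remains feasible for every dropped LMI, i.e.\ $\bm X_{l^{*}}\in\C_{\rel}$. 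Condition (a) then gives $\bm X_{l^{*}}\in\X$, so $\bm X_{l^{*}}\in\C$ and $\V_{\opt}=\V_{\rel}$.

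For the converse (b) $\Longrightarrow$ (a), I would argue by contraposition: assume some $F\in\F^{\underline{m}^{*}}(\clconv(\X))$ with $F\not\subseteq\X$ and build an instance violating objective exactness. Picking $\hat{\bm X}\in\ri(F)\setminus\X$, I construct $m=\underline{m}^{*}$ one-sided LMIs (so $[m]\setminus T=\emptyset$ and condition (v) is vacuous) whose active set cuts out $\aff(F)$ locally, together with a linear objective whose negative gradient decomposes as $-\bm A_{0}=\sum_{i\in T}\mu_{i}^{*}\bm A_{i}+\bm g$ with all $\mu_{i}^{*}>0$ and $\bm g\in\N_{\clconv(\X)}(\hat{\bm X})$, so that $\hat{\bm X}$ is the unique DWR optimum with $\underline{m}^{*}$ nonzero multipliers; choosing the right-hand sides so that $\C_{\rel}$ also meets $\ri(\clconv(\X))$ secures the relaxed Slater condition, and boundedness of the optimal set follows from uniqueness. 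Since $\hat{\bm X}\notin\C$ while it is the unique minimizer over $\C_{\rel}$, we get $\V_{\opt}>\V_{\rel}$, contradicting (b).

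The hard part will be this necessity construction: I must \emph{simultaneously} keep the optimizer $\hat{\bm X}$ on the boundary face $F$ (so that $\hat{\bm X}\notin\X$ is exploitable), force \emph{all} $\underline{m}^{*}$ multipliers to be strictly positive (so that the support has the prescribed size), and still have $\C_{\rel}$ reach into $\ri(\clconv(\X))$ to validate relaxed Slater --- three requirements that pull in different directions. The key technical point is choosing $\bm A_{0}$ with $-\bm A_{0}$ in the \emph{relative interior} of $\cone(\{\bm A_{i}\}_{i\in T})+\N_{\clconv(\X)}(\hat{\bm X})$, which is what makes every multiplier positive; I would verify such a choice exists generically and that it renders $\hat{\bm X}$ the unique optimum by a standard exposed-point/supporting-hyperplane argument as in \autoref{them:ext}.
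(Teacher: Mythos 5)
Your proposal follows essentially the same route as the paper's proof: the forward direction uses \autoref{them:opt} to pass to the constraints indexed by the nonzero multipliers (all binding by complementary slackness) and then replays the extreme-point/extreme-direction decomposition, \autoref{claim}, and the bounded-optimal-set argument from \autoref{them:bind}, while the converse builds the same tangent-hyperplane instance that pins a point of the offending face as the DWR optimum, merely phrased as a contraposition rather than directly. The only cosmetic difference is that the paper carries out the forward argument inside the smallest face of $\clconv(\X)$ containing $\bm X^*$ (using that the normal-cone element is orthogonal to that face) rather than in all of $\clconv(\X)\cap\{\bm X: \langle\bm A_i,\bm X\rangle=b_i^u,\ \forall i\in T\}$; both variants are valid.
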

							\begin{proof}
								We split the proof into two parts.
								\begin{enumerate}[(i)]
									\item  
									We first prove (a)$\Longrightarrow$ (b). 	
									Let ${\bm X^*}$ denote an optimal solution of DWR \eqref{eq_rel_rank}, corresponding to the optimal  dual solution $\bm \mu^* \in \Re^m_+$ with only $\underline{m}^*$ nonzero Lagrangian multipliers.
									We let $F \subseteq \clconv(\X)$ denote the smallest-dimensional face  of set $\clconv(\X)$ containing $\bm X^*$. Suppose that the  dimension of face $F$ is $d$.
									Then there are two cases to be discussed depending on the dimension $d$.
									\begin{enumerate}[(1)]
										\item Suppose that $d\le \underline{m}^* $. Then according to Part (a), we have that $\bm X^* \in F \subseteq  \X$ and the objective exactness of the DWR  directly follows, i.e., $\V_{\opt}= \V_{\rel}$.
										
										\item Suppose that $d>  \underline{m}^* $.  
										%
										%
										Given the relaxed Slater condition, using Part (ii) of  \autoref{them:opt}, the  primal and dual optimal solutions $\bm X^*$ and $\bm \mu^*$ satisfy
										\begin{align}\label{eq:kkt}
											-\bm A_0 - \sum_{i \in T}\mu^*_i \bm  A_i  \in \N_{\clconv(\X)}(\bm X^*),
										\end{align}
										where  set $T$ denotes the support of the dual optimal solution $\bm \mu^*$  with $|T|= \underline{m}^*$.
										
										Since $F$ is the smallest-dimension face containing $\bm X^*$, point $\bm X^*$ lies in the relative interior of set $F$.	According to \autoref{def:norm} of normal cone, it follows that for any $\hat{\bm V}\in \N_{\clconv(\X)}(\bm X^*)$,
										$$ \langle\hat{\bm V}, \bm  X^* - \hat{\bm X}\rangle = 0, \forall \hat{\bm  X} \in F.$$ Then let us define  a set 
										$$\hat{C}_{\rel}:=F\cap \{\bm X \in \Q:  \langle\bm  A_i, \bm X \rangle= b_i^u, \forall i \in T\},$$ 
										which is neither empty since $\langle\bm  A_i, \bm X^* \rangle= b_i^u$ nor a singleton as the dimension of face $F$ is greater than $|T|=\underline m^*$.
										Thus, for any $\hat{\bm{X}} \in \hat{C}_{\rel}$ and some $\hat{\bm V}\in \N_{\clconv(\X)}(\bm X^*)$, using the results above, we have 
										\[-\bm A_0= \sum_{i \in [\underline{m}^*]} \mu_i^*\bm  A_i+ \hat{\bm V},   \ \ 
										\langle \bm  A_i, \bm X^* \rangle = \langle \bm  A_i, \hat{\bm{X}} \rangle = b_i^u, \forall i \in T,  \ \
										\langle \hat{\bm V}, \bm X^* - \hat{\bm{X}}\rangle = 0, \]
										where the first equation is from the optimality condition \eqref{eq:kkt}.  Therefore, we can conclude that $\langle\bm A_0, \hat{\bm{X}} \rangle  = -\langle\sum_{i \in [\underline{m}^*]} \mu_i^*\bm  A_i+ \hat{\bm V},  \hat{\bm{X}} \rangle = -\langle\sum_{i \in [\underline{m}^*]} \mu_i^*\bm  A_i+ \hat{\bm V},  \bm{X}^* \rangle = \langle\bm A_0, \bm X^* \rangle=  \V_{\rel}$ for any $\hat{\bm{X}} \in \hat{C}_{\rel}$.
										
										Following the proof of \autoref{them:bind}, there must exist an extreme point $\bm Y^*$ in set $\hat{C}_{\rel}$ that satisfies $m$ LMIs and belongs to  some at most $\underline{m}^*$-dimensional face of face $F$, which is exactly some face in $\F^{\underline{m}^*}(\clconv(\X))$ and thus a subset of $ \X$. Hence, we have $\bm Y^*\in \C$ and  $\V_{\opt} =\V_{\rel}$. 
									\end{enumerate} 
									
									\item We next prove (b)$\Longrightarrow$ (a).  According to the relaxed Slater condition,
									set $\clconv(\X)$ must have the nonempty relative interior. Suppose that $\hat{\bm X}\in \ri(\clconv(\X))$. Then there exists a ball $B(\hat{\bm{X}},\epsilon)$ with radius $\epsilon>0$ such that $B(\hat{\bm{X}},\epsilon)\cap \aff(\clconv(\X))\subseteq \clconv(\X)$. 
									

									Given a $d$-dimensional proper face $F$ of set $\clconv(\X)$ with $d\le \{\dim(\clconv(X))-1, \underline{m}^*\}$, we must have $\hat{\bm{X}}\notin F$, otherwise $F=\clconv(\X)$. Then for any point $\bm X\in F$,  let us can construct $d$-dimensional LMI system $\H$ whose corresponding hyperplanes pass the point $\bm{X}$ and are tangent to the ball $B(\hat{\bm{X}}, \|\bm{X}-\hat{\bm{X}}\|/2)$. In addition, the intersection of $\H$ and $\clconv(\X)$  includes the ball $B(\hat{\bm{X}}, \|\bm{X}-\hat{\bm{X}}\|/2)\cap \aff(\clconv(\X))$, and let the rest $m-d$ number of LMIs have zero-valued technology matrices and zero-valued right-hand sides. Thus, we can construct a DWR set $\C_{\rel}=\H\cap \clconv(\X)$ and it satisfies relaxed slater condition since $B(\hat{\bm{X}}, \|\bm{X}-\hat{\bm{X}}\|/2)\cap \aff(\clconv(\X))\subseteq \C_{\rel}$. Based on the construction of $\H$ and $\C_{\rel}$, 
									the point $\bm{X}$ is an extreme point of set $\C_{\rel}$. Following the proof of \autoref{them:exactobj}, we have that $\bm{X}\in \C\subseteq \X$. Hence, $F\subseteq \X$.
									
									Now suppose that $d=\dim(\clconv(X))$, i.e., $F=\clconv(\X)$. Then for any $\bm{X}\in F$ but $\bm{X}\neq \hat{\bm X}$, we can do the same procedure as the previous  proof and obtain $\clconv(\X)\setminus \{\hat{\bm X}\}\subseteq \X$. Since  set $\X$ is closed, we must have $\clconv(\X)=\X$. This completes the proof.
									\qed
								\end{enumerate}
							\end{proof}
							
							For the simultaneously necessary and sufficient condition in \autoref{them:dual},  we remark that 
							\begin{enumerate}[(i)]
								\item This condition, together with that provided by \autoref{them:bind}, provide a unified primal and dual analysis for the DWR objective exactness.
								\item From Part (ii) in \autoref{them:opt}, we see that the smallest number of nonzero optimal Lagrangian multipliers is upper bounded by the dimension of  binding LMIs, i.e., $\underline{m}^* \le \underline m$. Hence, the condition in \autoref{them:dual} can be more general than that in \autoref{them:bind} based on binding LMIs since $\F^{\underline{m}^*}(\conv(\X)) \subseteq \F^{\underline m}(\conv(\X))$, which aligns with the findings in \autoref{eg:sd2qcqp}. 
								On the other hand,  we need the relaxed Slater condition in \autoref{them:dual}, while \autoref{them:bind} does not; and
								\item Analogous to  \autoref{cor:bind}, it is interesting to apply \autoref{them:dual} to the QCQP \eqref{qcqp} with two quadratic constraints and  generalize the result in \cite{ben2014hidden}. In fact, \autoref{cor:dual} below proves the objective exactness for the QCQP with two quadratic constraints when there is only one nonzero Lagrangian multiplier. However, the authors in \cite{ben2014hidden} proved the objective exactness for  only the SD-QCQP under the same condition. 
								It is worth mentioning that the work in \cite{ben2014hidden}  assumed the Slater condition, finite optimal value, and bounded optimal set, which satisfies the presumption in Part (b) of \autoref{them:dual}. 
							\end{enumerate}
							
							
							\begin{corollary}\label{cor:dual}
								For the QCQP \eqref{qcqp} with two quadratic one-sided inequality constraints, suppose that its DWR satisfies the relaxed Slater condition and yields finite optimal value and bounded optimal set. Then the objective exactness holds if  one of the optimal Lagrangian multipliers is zero.
							\end{corollary}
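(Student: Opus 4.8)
The plan is to obtain \autoref{cor:dual} by specializing \autoref{them:dual} to the rank-one domain of QCQP, letting \autoref{lem:psd} supply the facial structure. Writing QCQP \eqref{qcqp} in its matrix form \eqref{eq_qcqp}, the domain set is $\X=\{\bm X\in\S_+^{n+1}:\rank(\bm X)\le1\}$, and the two one-sided quadratic constraints become the LMIs $\langle\bm A_1,\bm X\rangle\le b_1^u$ and $\langle\bm A_2,\bm X\rangle\le b_2^u$, augmented by the homogenization equality $X_{11}=1$. \autoref{lem:psd} with $k=1$ gives $\clconv(\X)=\S_+^{n+1}$ and certifies that every face of $\clconv(\X)$ of dimension at most $\tfrac{k(k+3)}{2}=2$ is contained in $\X$. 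Hence the Inclusive-Face hypothesis (a) of \autoref{them:dual} holds automatically for every $\underline{m}^*\le2$, and the whole argument reduces to verifying that this instance meets hypothesis (b) with $\underline{m}^*\le2$.

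First I would bound the number of nonzero optimal multipliers. The assumptions furnish the relaxed Slater condition, $\V_{\rel}>-\infty$, and a bounded optimal set, so Part (ii) of \autoref{them:opt} produces an optimal dual vector supported on the binding constraints. Since one of the two quadratic multipliers is assumed to vanish, say that of $\langle\bm A_2,\bm X\rangle\le b_2^u$, the nonzero multipliers are confined to the remaining quadratic constraint and the equality $X_{11}=1$. Representing the equality through the one-sided pair $X_{11}\le1$, $-X_{11}\le-1$ (so that the standing hypothesis $b_i^l=-\infty$ of \autoref{them:dual} applies) and taking the sparsest such representation, the equality contributes a single unit to the support count; therefore $\underline{m}^*\le2$, exactly the range in which (a) is guaranteed.

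It then remains to check the parallelism condition (v). Writing $T$ for the support of this sparse dual and $\H=\spa(\{\bm A_i\}_{i\in T})$, in the generic case where the equality multiplier is nonzero one one-sided copy of $X_{11}=1$ enters $T$, so the matrix $\bm E_{11}$ (the matrix unit with a single $1$ in the $(1,1)$ entry) lies in $\H$. Consequently the redundant one-sided copy has a vanishing residual $\bm A_j-\Proj_{\H}(\bm A_j)=\bm 0$, and the only genuinely nonzero residual comes from the zero-multiplier quadratic $\bm A_2$; a single residual direction is vacuously ``parallel with the same direction,'' so (v) holds. Combining (a), (v), and the standing hypotheses, \autoref{them:dual} yields $\V_{\opt}=\V_{\rel}$, recovering and strengthening the two-constraint objective-exactness result of \cite{ben2014hidden} while dropping their simultaneous-diagonalizability requirement.

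The step I expect to be the main obstacle is precisely this bookkeeping of the homogenization equality inside the one-sided framework that \autoref{them:dual} presumes: one must keep its contribution to $\underline{m}^*$ at a single unit (to remain within the dimension-two facial guarantee of \autoref{lem:psd}) while simultaneously absorbing $\spa\{\bm E_{11}\}$ into $\H$ so that the leftover one-sided copy cannot break (v). The borderline case in which the equality multiplier itself is zero requires separate attention, since then neither one-sided copy enters $T$ and the residuals $\pm(\bm E_{11}-\Proj_{\H}(\bm E_{11}))$ point in opposite directions; here $\underline{m}^*$ drops to at most one, and I would argue that the associated optimality condition $-\bm A_0-\mu_1^*\bm A_1\in\N_{\clconv(\X)}(\bm X^*)$ confines a minimizer to a low-dimensional face of $\clconv(\X)$ that \autoref{lem:psd} again places inside $\X$, so that objective exactness persists even when the clean application of (v) is unavailable.
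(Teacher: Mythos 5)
Your overall route is the paper's own: the paper proves this corollary in one line by invoking \autoref{them:dual} together with the facial-inclusion result of \autoref{lem:psd} (``the proof is similar to \autoref{cor:bind}''), and your reduction of the QCQP to the rank-one domain $\X=\{\bm X\in\S_+^{n+1}:\rank(\bm X)\le 1\}$, the observation that \autoref{lem:psd} with $k=1$ certifies hypothesis (a) of \autoref{them:dual} for all faces of dimension at most $2$, and your bookkeeping of the homogenization equality $X_{11}=1$ as a pair of one-sided LMIs are all consistent with that. In the generic sub-case, where the net multiplier of $X_{11}=1$ is nonzero, your accounting is correct: $T$ absorbs one quadratic constraint and one copy of the equality, so $\underline{m}^*\le 2$, the residual of the redundant copy vanishes because $\bm E_{11}\in\H$, and the single remaining residual makes condition (v) vacuous. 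That part of the argument is sound and actually more explicit than what the paper records.

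The gap is in your borderline sub-case, where the equality's net multiplier is also zero. There, $T=\{1\}$, $\H=\spa(\bm A_1)$, and $[m]\setminus T$ contains $\bm A_2$ together with \emph{both} one-sided copies of $X_{11}=1$, whose residuals $\pm\bigl(\bm E_{11}-\Proj_{\H}(\bm E_{11})\bigr)$ are nonzero and oppositely directed unless $\bm A_1$ is proportional to $\bm E_{11}$; condition (v) of \autoref{them:dual} therefore genuinely fails, and the theorem cannot be invoked as stated. Your proposed patch --- that the optimality condition $-\bm A_0-\mu_1^*\bm A_1\in\N_{\clconv(\X)}(\bm X^*)$ ``confines a minimizer to a low-dimensional face'' --- is not correct as written: that condition only says $\bm A_0+\mu_1^*\bm A_1\succeq 0$ and places the optimal set inside the complementary face $F=\{\bm X\succeq 0:(\bm A_0+\mu_1^*\bm A_1)\bm X=\bm 0\}$, which is isomorphic to $\S_+^{r}$ for the nullity $r$ and can have arbitrarily large dimension. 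The low dimension in the proof of \autoref{them:dual} is obtained only after passing to an extreme point of $F$ intersected with the binding hyperplanes, and the whole purpose of \autoref{claim} and condition (v) is to guarantee that this extreme point can be chosen to still satisfy the constraints outside $T$ --- here, both the equality $X_{11}=1$ and $\langle\bm A_2,\bm X\rangle\le b_2^u$. Without (v) you have no control over either, so an extreme point of $F\cap\{X_{11}=1,\langle\bm A_1,\bm X\rangle=b_1^u,\langle\bm A_2,\bm X\rangle= b_2^u\}$ may sit in a three-dimensional face of $\S_+^{n+1}$ and hence have rank two. Closing this case requires an additional argument (for instance, perturbing the dual so that the equality enters the support, or a direct rank-reduction on the optimal face), and as it stands your sketch does not supply one.
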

							\begin{proof}
								Using \autoref{them:dual},	the proof is similar to \autoref{cor:bind} and thus omitted. \qed
							\end{proof}
							
							\section{Conclusion} \label{sec:con}
							In this paper, we study  the rank-constrained optimization problem (RCOP). Replacing its  domain set including a rank constraint by the closed convex hull offers us a convex Dantzig-Wolfe Relaxation (DWR) of the RCOP. We have derived the first-known simultaneously necessary and sufficient conditions for three notions of DWR exactness. Our proposed conditions have identified,  for the first time,  the effect of the domain set on determining  the DWR exactness from a geometric angle. Specifically, the DWR exactness relies on how many faces in the closed convex hull of the domain set and its recession cone are contained in the original domain set. Our results have many potential applications, 
							e.g., we can analyze the sparse-constrained machine learning problems where the zero-norm constraint can be viewed as the rank constraint of a diagonal matrix. We are working on studying the rank bound when the domain set is defined by spectral functions and developing an efficient Dantzig-Wolfe (or equivalently, column generation) solution approach. 

\section*{Acknowledgment}
								This research has been supported in part by \exclude{the National Science Foundation grants 2246414 and 2246417, and} the Georgia Tech ARC-ACO fellowship. The authors would like to thank Prof. Boshi Yang from Clemson University	for his valuable suggestions on the first draft of the paper.

							\section*{Declarations}
							
							\noindent \textbf{Funding and/or Conflicts of interests/Competing interests:} This research has been by the National Science Foundation and the Georgia Tech ARC-ACO fellowship. Authors have no other competing interests to report.
							
							\bibliography{references.bib}

\begin{thebibliography}{53}
\providecommand{\natexlab}[1]{#1}
\providecommand{\url}[1]{\texttt{#1}}
\providecommand{\urlprefix}{URL }

\bibitem[{Argue et~al.(2022)Argue, K{\i}l{\i}n{\c{c}}-Karzan, \protect\BIBand{}
  Wang}]{argue2022necessary}
Argue CJ, K{\i}l{\i}n{\c{c}}-Karzan F, Wang AL (2022) Necessary and sufficient
  conditions for rank-one-generated cones. \emph{Mathematics of Operations
  Research} .

\bibitem[{Argyriou et~al.(2012)Argyriou, Foygel, \protect\BIBand{}
  Srebro}]{argyriou2012sparse}
Argyriou A, Foygel R, Srebro N (2012) Sparse prediction with the $ k $-support
  norm. \emph{Advances in Neural Information Processing Systems} 25.

\bibitem[{Azuma et~al.(2022)Azuma, Fukuda, Kim, \protect\BIBand{}
  Yamashita}]{azuma2022exact}
Azuma G, Fukuda M, Kim S, Yamashita M (2022) Exact sdp relaxations for
  quadratic programs with bipartite graph structures. \emph{arXiv preprint
  arXiv:2204.09509} .

\bibitem[{Barvinok(1995)}]{barvinok1995problems}
Barvinok AI (1995) Problems of distance geometry and convex properties of
  quadratic maps. \emph{Discrete \& Computational Geometry} 13(2):189--202.

\bibitem[{Ben-Tal \protect\BIBand{} Den~Hertog(2014)}]{ben2014hidden}
Ben-Tal A, Den~Hertog D (2014) Hidden conic quadratic representation of some
  nonconvex quadratic optimization problems. \emph{Mathematical Programming}
  143(1):1--29.

\bibitem[{Ben-Tal et~al.(2009)Ben-Tal, El~Ghaoui, \protect\BIBand{}
  Nemirovski}]{ben2009robust}
Ben-Tal A, El~Ghaoui L, Nemirovski A (2009) \emph{Robust optimization},
  volume~28 (Princeton university press).

\bibitem[{Bertrand \protect\BIBand{} Moonen(2011)}]{bertrand2011consensus}
Bertrand A, Moonen M (2011) Consensus-based distributed total least squares
  estimation in ad hoc wireless sensor networks. \emph{IEEE Transactions on
  Signal Processing} 59(5):2320--2330.

\bibitem[{Bertsimas et~al.(2021)Bertsimas, Cory-Wright, \protect\BIBand{}
  Pauphilet}]{bertsimas2021new}
Bertsimas D, Cory-Wright R, Pauphilet J (2021) A new perspective on low-rank
  optimization. \emph{arXiv preprint arXiv:2105.05947} .

\bibitem[{Blekherman et~al.(2022)Blekherman, Dey, \protect\BIBand{}
  Sun}]{blekherman2022aggregations}
Blekherman G, Dey SS, Sun S (2022) Aggregations of quadratic inequalities and
  hidden hyperplane convexity. \emph{arXiv preprint arXiv:2210.01722} .

\bibitem[{Buet-Golfouse \protect\BIBand{} Utyagulov(2022)}]{buet2022towards}
Buet-Golfouse F, Utyagulov I (2022) Towards fair unsupervised learning.
  \emph{2022 ACM Conference on Fairness, Accountability, and Transparency},
  1399--1409.

\bibitem[{Burachik \protect\BIBand{} Jeyakumar(2005)}]{burachik2005simple}
Burachik R, Jeyakumar V (2005) A simple closure condition for the normal cone
  intersection formula. \emph{Proceedings of the American Mathematical Society}
  133(6):1741--1748.

\bibitem[{Burer(2015)}]{burer2015gentle}
Burer S (2015) A gentle, geometric introduction to copositive optimization.
  \emph{Mathematical Programming} 151(1):89--116.

\bibitem[{Burer \protect\BIBand{} Ye(2020)}]{burer2020exact}
Burer S, Ye Y (2020) Exact semidefinite formulations for a class of (random and
  non-random) nonconvex quadratic programs. \emph{Mathematical Programming}
  181(1):1--17.

\bibitem[{Conforti et~al.(2014)Conforti, Cornu{\'e}jols, Zambelli
  et~al.}]{conforti2014integer}
Conforti M, Cornu{\'e}jols G, Zambelli G, et~al. (2014) \emph{Integer
  programming}, volume 271 (Springer).

\bibitem[{Conn et~al.(2000)Conn, Gould, \protect\BIBand{}
  Toint}]{conn2000trust}
Conn AR, Gould NI, Toint PL (2000) \emph{Trust region methods} (SIAM).

\bibitem[{De~Rosa \protect\BIBand{} Khajavirad(2022)}]{de2022explicit}
De~Rosa A, Khajavirad A (2022) Explicit convex hull description of bivariate
  quadratic sets with indicator variables. \emph{arXiv preprint
  arXiv:2208.08703} .

\bibitem[{Dey et~al.(2019)Dey, Kocuk, \protect\BIBand{} Santana}]{dey2019study}
Dey SS, Kocuk B, Santana A (2019) A study of rank-one sets with linear side
  constraints and application to the pooling problem. \emph{arXiv preprint
  arXiv:1902.00739} .

\bibitem[{Dey et~al.(2022)Dey, Munoz, \protect\BIBand{}
  Serrano}]{dey2022obtaining}
Dey SS, Munoz G, Serrano F (2022) On obtaining the convex hull of quadratic
  inequalities via aggregations. \emph{SIAM Journal on Optimization}
  32(2):659--686.

\bibitem[{Deza et~al.(1997)Deza, Laurent, \protect\BIBand{}
  Weismantel}]{deza1997geometry}
Deza MM, Laurent M, Weismantel R (1997) \emph{Geometry of cuts and metrics},
  volume~2 (Springer).

\bibitem[{Gharanjik et~al.(2016)Gharanjik, Shankar, Soltanalian,
  \protect\BIBand{} Oftersten}]{gharanjik2016iterative}
Gharanjik A, Shankar B, Soltanalian M, Oftersten B (2016) An iterative approach
  to nonconvex qcqp with applications in signal processing. \emph{2016 IEEE
  Sensor Array and Multichannel Signal Processing Workshop (SAM)}, 1--5 (IEEE).

\bibitem[{Hildebrand(2016)}]{hildebrand2016spectrahedral}
Hildebrand R (2016) Spectrahedral cones generated by rank 1 matrices.
  \emph{Journal of Global Optimization} 64(2):349--397.

\bibitem[{Hiriart-Urruty \protect\BIBand{}
  Lemar{\'e}chal(2004)}]{hiriart2004fundamentals}
Hiriart-Urruty JB, Lemar{\'e}chal C (2004) \emph{Fundamentals of convex
  analysis} (Springer Science \& Business Media).

\bibitem[{Ho-Nguyen \protect\BIBand{} Kilinc-Karzan(2017)}]{ho2017second}
Ho-Nguyen N, Kilinc-Karzan F (2017) A second-order cone based approach for
  solving the trust-region subproblem and its variants. \emph{SIAM Journal on
  Optimization} 27(3):1485--1512.

\bibitem[{Hoerl \protect\BIBand{} Kennard(1970)}]{hoerl1970ridge}
Hoerl AE, Kennard RW (1970) Ridge regression: Biased estimation for
  nonorthogonal problems. \emph{Technometrics} 12(1):55--67.

\bibitem[{Huang \protect\BIBand{} Sidiropoulos(2016)}]{huang2016consensus}
Huang K, Sidiropoulos ND (2016) Consensus-admm for general quadratically
  constrained quadratic programming. \emph{IEEE Transactions on Signal
  Processing} 64(20):5297--5310.

\bibitem[{Huang \protect\BIBand{} Palomar(2010)}]{huang2010dual}
Huang Y, Palomar DP (2010) A dual perspective on separable semidefinite
  programming with applications to optimal downlink beamforming. \emph{IEEE
  Transactions on Signal Processing} 58(8):4254--4271.

\bibitem[{Huang \protect\BIBand{} Palomar(2014)}]{huang2014randomized}
Huang Y, Palomar DP (2014) Randomized algorithms for optimal solutions of
  double-sided qcqp with applications in signal processing. \emph{IEEE
  Transactions on Signal Processing} 62(5):1093--1108.

\bibitem[{Josz et~al.(2016)Josz, Fliscounakis, Maeght, \protect\BIBand{}
  Panciatici}]{josz2016ac}
Josz C, Fliscounakis S, Maeght J, Panciatici P (2016) Ac power flow data in
  matpower and qcqp format: itesla, rte snapshots, and pegase. \emph{arXiv
  preprint arXiv:1603.01533} .

\bibitem[{Joyce \protect\BIBand{} Yang(2021)}]{joyce2021convex}
Joyce A, Yang B (2021) Convex hull results on quadratic programs with
  non-intersecting constraints. \emph{Optimization Online preprint} .

\bibitem[{Khabbazibasmenj et~al.(2010)Khabbazibasmenj, Vorobyov,
  \protect\BIBand{} Hassanien}]{khabbazibasmenj2010robust}
Khabbazibasmenj A, Vorobyov SA, Hassanien A (2010) Robust adaptive beamforming
  via estimating steering vector based on semidefinite relaxation. \emph{2010
  Conference Record of the Forty Fourth Asilomar Conference on Signals, Systems
  and Computers}, 1102--1106 (IEEE).

\bibitem[{Khobahi et~al.(2019)Khobahi, Soltanalian, Jiang, \protect\BIBand{}
  Swindlehurst}]{khobahi2019optimized}
Khobahi S, Soltanalian M, Jiang F, Swindlehurst AL (2019) Optimized
  transmission for parameter estimation in wireless sensor networks. \emph{IEEE
  Transactions on Signal and Information Processing over Networks} 6:35--47.

\bibitem[{K{\i}l{\i}n{\c{c}}-Karzan \protect\BIBand{}
  Wang(2021)}]{kilincc2021exactness}
K{\i}l{\i}n{\c{c}}-Karzan F, Wang AL (2021) Exactness in sdp relaxations of
  qcqps: Theory and applications. \emph{arXiv preprint arXiv:2107.06885} .

\bibitem[{Kim et~al.(2021)Kim, Tawarmalani, \protect\BIBand{}
  Richard}]{kim2021convexification}
Kim J, Tawarmalani M, Richard JPP (2021) Convexification of
  permutation-invariant sets and an application to sparse principal component
  analysis. \emph{Mathematics of Operations Research} .

\bibitem[{Kim \protect\BIBand{} Kojima(2003)}]{kim2003exact}
Kim S, Kojima M (2003) Exact solutions of some nonconvex quadratic optimization
  problems via sdp and socp relaxations. \emph{Computational optimization and
  applications} 26(2):143--154.

\bibitem[{Locatelli(2020)}]{locatelli2020kkt}
Locatelli M (2020) Kkt-based primal-dual exactness conditions for the shor
  relaxation. \emph{arXiv preprint arXiv:2011.05033} .

\bibitem[{Low(2013)}]{low2013convex}
Low SH (2013) Convex relaxation of optimal power flow: A tutorial. \emph{2013
  IREP Symposium Bulk Power System Dynamics and Control-IX Optimization,
  Security and Control of the Emerging Power Grid}, 1--15 (IEEE).

\bibitem[{Luc(1990)}]{luc1990recession}
Luc DT (1990) Recession cones and the domination property in vector
  optimization. \emph{Mathematical Programming} 49(1-3):113--122.

\bibitem[{Pataki(1998)}]{pataki1998rank}
Pataki G (1998) On the rank of extreme matrices in semidefinite programs and
  the multiplicity of optimal eigenvalues. \emph{Mathematics of operations
  research} 23(2):339--358.

\bibitem[{P{\'o}lik \protect\BIBand{} Terlaky(2007)}]{polik2007survey}
P{\'o}lik I, Terlaky T (2007) A survey of the s-lemma. \emph{SIAM review}
  49(3):371--418.

\bibitem[{Polyak(1998)}]{polyak1998convexity}
Polyak BT (1998) Convexity of quadratic transformations and its use in control
  and optimization. \emph{Journal of Optimization Theory and Applications}
  99(3):553--583.

\bibitem[{Rockafellar(1972)}]{rockafellar2015convex}
Rockafellar RT (1972) \emph{Convex analysis} (Princeton university press).

\bibitem[{Samadi et~al.(2018)Samadi, Tantipongpipat, Morgenstern, Singh,
  \protect\BIBand{} Vempala}]{samadi2018price}
Samadi S, Tantipongpipat U, Morgenstern JH, Singh M, Vempala S (2018) The price
  of fair pca: One extra dimension. \emph{Advances in neural information
  processing systems} 31.

\bibitem[{Santana \protect\BIBand{} Dey(2020)}]{santana2020convex}
Santana A, Dey SS (2020) The convex hull of a quadratic constraint over a
  polytope. \emph{SIAM Journal on Optimization} 30(4):2983--2997.

\bibitem[{Sojoudi \protect\BIBand{} Lavaei(2014)}]{sojoudi2014exactness}
Sojoudi S, Lavaei J (2014) Exactness of semidefinite relaxations for nonlinear
  optimization problems with underlying graph structure. \emph{SIAM Journal on
  Optimization} 24(4):1746--1778.

\bibitem[{Tantipongpipat et~al.(2019)Tantipongpipat, Samadi, Singh,
  Morgenstern, \protect\BIBand{} Vempala}]{tantipongpipat2019multi}
Tantipongpipat U, Samadi S, Singh M, Morgenstern JH, Vempala S (2019)
  Multi-criteria dimensionality reduction with applications to fairness.
  \emph{Advances in neural information processing systems} 32.

\bibitem[{Tikhonov \protect\BIBand{} Arsenin(1977)}]{tikhonov1977solutions}
Tikhonov AN, Arsenin VI (1977) \emph{Solutions of Ill-posed Problems: Andrey N.
  Tikhonov and Vasiliy Y. Arsenin. Translation Editor Fritz John} (Wiley).

\bibitem[{Wang \protect\BIBand{} Kilinc-Karzan(2020)}]{wang2020geometric}
Wang AL, Kilinc-Karzan F (2020) A geometric view of sdp exactness in qcqps and
  its applications. \emph{arXiv preprint arXiv:2011.07155} .

\bibitem[{Wang \protect\BIBand{}
  K{\i}l{\i}n{\c{c}}-Karzan(2022)}]{wang2022tightness}
Wang AL, K{\i}l{\i}n{\c{c}}-Karzan F (2022) On the tightness of sdp relaxations
  of qcqps. \emph{Mathematical Programming} 193(1):33--73.

\bibitem[{Wang \protect\BIBand{} Xia(2015)}]{wang2015strong}
Wang S, Xia Y (2015) Strong duality for generalized trust region subproblem:
  S-lemma with interval bounds. \emph{Optimization Letters} 9(6):1063--1073.

\bibitem[{Wei et~al.(2022)Wei, G{\'o}mez, \protect\BIBand{}
  K{\"u}{\c{c}}{\"u}kyavuz}]{wei2022ideal}
Wei L, G{\'o}mez A, K{\"u}{\c{c}}{\"u}kyavuz S (2022) Ideal formulations for
  constrained convex optimization problems with indicator variables.
  \emph{Mathematical Programming} 192(1):57--88.

\bibitem[{Xie \protect\BIBand{} Deng(2020)}]{xie2020scalable}
Xie W, Deng X (2020) Scalable algorithms for the sparse ridge regression.
  \emph{SIAM Journal on Optimization} 30(4):3359--3386.

\bibitem[{Yakubovich(1971)}]{yakubovich1971s}
Yakubovich V (1971) The s-procedure in nonlinear control theory. vestnik
  leningr. univ., 4: 73--93, 1977.

\bibitem[{Ye \protect\BIBand{} Zhang(2003)}]{ye2003new}
Ye Y, Zhang S (2003) New results on quadratic minimization. \emph{SIAM Journal
  on Optimization} 14(1):245--267.

\end{thebibliography}
							\newpage
							\begin{appendices}
								\section{Additional Proofs}
								\subsection{Proof of \autoref{cor:gtrs}} \label{proof:corgtrs}
								\corgtrs*
								\begin{proof}
									For the GTRS problem \eqref{eq:gtrs}, its corresponding sets $\C$ and $\C_{\rel}$ are equal to
									\begin{align*}
										\C:=\left\{\bm X\in \S_+^{n+1}: \rank(\bm X)\le 1, X_{11}=1, \langle\bm A_1, \bm X\rangle \le b_1 \right\}, \ \ \C_{\rel}  :=\left\{\bm X\in \S_+^{n+1}:  X_{11}=1, \langle\bm A_1, \bm X\rangle \le b_1 \right\},
									\end{align*}
									where $\bm A_1 = \begin{pmatrix}
										0 & \bm q_1^{\top}/2\\
										\bm q_1/2 & \bm Q_1
									\end{pmatrix}$.
									
									According to representation theorem in \cite{rockafellar2015convex}, it suffices to prove that $\C_{\rel}$ and $\clconv(\C)$ share the same extreme points and recession cones in order to show their equivalence. They always have the same extreme points based on \autoref{them:egqcqp}. 
									Thus, without loss of generality, we can assume both sets are nonempty and unbounded. We will show that these two sets enjoy the same recession cone. To prove this, we observe that the recession cone of  $\C_{\rel} $ is equal to
									\[ \reccone(\C_{\rel})= \left\{\bm X\in \S_+^{n+1}:  X_{11}=0, \langle\bm A, \bm X\rangle \le 0 \right\} = \left\{\bm X\in \S_+^{n+1}:  \exists  \bm Y\in \S_+^{n},\bm X=\begin{pmatrix}
										0 & \bm 0^\top\\
										\bm 0 & \bm Y
									\end{pmatrix}, \langle \bm Q_1, \bm Y\rangle \le 0 \right\},\]
									where  the second equation is due to the fact that $X_{1j}=X_{j1}=0$ for all $j\in [n+1]$. According to Part (ii) in \autoref{them:egqcqp},  
									thus we have
									\begin{align*}
										\reccone(\C_{\rel}) &= \clconv\left( \left\{\bm X\in \S_+^{n+1}:  \exists  \bm Y\in \S_+^{n},\bm X=\begin{pmatrix}
											0 & \bm 0^\top\\
											\bm 0 & \bm Y
										\end{pmatrix},  \rank(\bm Y)\le 1, \langle \bm Q_1, \bm Y\rangle \le 0  \right\} \right) \\
										&= \clconv\left( \left\{\bm X\in \S_+^{n+1}: \rank(\bm X)\le1, \langle \bm A_1, \bm X\rangle \le 0,  X_{11}=0 \right\}\right).
									\end{align*}
									
									Thus, it remains to show that any rank-one direction in  set $\C_{\rel}$ is also a direction in set $\clconv(\C)$, which implies the recession cone equivalence. Let us denote by $\bm D  :=\begin{pmatrix}
										0 & \bm 0^{\top}\\
										\bm 0 & \bm y \bm y^{\top}
									\end{pmatrix}$ a nonzero rank-one direction in $\reccone(\C_{\rel})$. Then we  have $\langle\bm Q_1, \bm y \bm y^{\top} \rangle \leq 0$.
									There are two cases to be discussed depending on whether 
										$
										\langle\bm Q_1, \bm y \bm y^{\top} \rangle - |\bm q_1^{\top} \bm y|
										$ is equal to zero or not.
										\begin{enumerate}[(i)]
											\item Suppose that $\langle\bm Q_1, \bm y \bm y^{\top} \rangle - |\bm q_1^{\top} \bm y|  \neq 0$. Since matrix $\bm D$ is sign-invariant with respect to $\bm y$, we can let $\bm y:=-\bm y $ if $ \bm q_1^{\top} \bm y > 0$.  Thus,  we always have $\langle\bm Q_1, \bm y \bm y^{\top} \rangle + \bm q_1^{\top} \bm y < 0$. Then  there is a scalar $\bar{\gamma}\ge 1$ such that  
											\begin{align}
												\left\langle \bm A_1, \begin{pmatrix}
													1 & \gamma \bm y^{\top}\\
													\gamma \bm y & \gamma^2 \bm y \bm y^{\top}
												\end{pmatrix} \right \rangle = \left\langle \begin{pmatrix}
													0 & \bm q_1^{\top}/2\\
													\bm q_1/2 & \bm Q_1
												\end{pmatrix}, \begin{pmatrix}
													1 & \gamma \bm y^{\top}\\
													\gamma \bm y & \gamma^2 \bm y \bm y^{\top}
												\end{pmatrix} \right \rangle \le \gamma \left(\langle\bm Q_1, \bm y \bm y^{\top} \rangle + \bm q_1^{\top} \bm y\right) \le b_1, \forall \gamma \ge \bar{\gamma},
												\label{eq_feasible}
											\end{align}
											where the first inequality is due to $\langle\bm Q_1, \bm y \bm y^{\top}\rangle \le 0$.
											
											Let us define a rank-one matrix $\hat{\bm X} := \begin{pmatrix}
												1 & 2\bar{\gamma} \bm y^{\top}\\
												2\bar{\gamma} \bm y & 4\bar{\gamma}^2 \bm y \bm y^{\top}
											\end{pmatrix}$. According to the result in \eqref{eq_feasible}, we have $\hat{\bm X}  \in \C$.  For any $\alpha\ge 0$, matrix $\hat{\bm X} +\alpha\bm D$ can be written as the following convex combination 
											\begin{align*}
												\hat{\bm X} +\alpha\bm D  &= \frac{\alpha}{\alpha+\bar{\gamma}^2} \begin{pmatrix}
													1 & \bar{\gamma} \bm y^{\top}\\
													\bar{\gamma} \bm y & \bar{\gamma}^2 \bm y \bm y^{\top}
												\end{pmatrix}  + \frac{\bar{\gamma}^2}{\alpha+\bar{\gamma}^2} \begin{pmatrix}
													1 & (\alpha+2\bar{\gamma}^2)/\bar{\gamma}\bm y^{\top}\\
													(\alpha+2\bar{\gamma}^2)/\bar{\gamma}\bm y & (\alpha+2\bar{\gamma}^2)^2/\bar{\gamma}^2 \bm y \bm y^{\top} 
												\end{pmatrix}  \\
												&=  \frac{\alpha}{\alpha+\bar{\gamma}^2} \bm X_1+  \frac{\bar{\gamma}^2}{\alpha+\bar{\gamma}^2} \bm X_2,
											\end{align*}
											where both rank-one matrices $\bm X_1,\bm X_2$ belong to set $\C$ because $\bar{\gamma}, (\alpha+2\bar{\gamma}^2)/\bar{\gamma} \ge  \bar{\gamma}$, and thus the inequalities \eqref{eq_feasible} hold. It follows that $\hat{\bm X} +\alpha\bm D \in \clconv(\C)$ for any $\alpha\geq 0$, implying that $\bm D$ is a also recession direction of $\clconv(\C)$. 

											\item Suppose that $\langle\bm Q_1, \bm y \bm y^{\top} \rangle - |\bm q_1^{\top} \bm y|  = 0$, then we have
											$\bm q_1^{\top} \bm y =\langle\bm Q_1, \bm y \bm y^{\top} \rangle =0$.  
											Then let us consider two following subcases depending on whether  $\bm Q_1 \bm y=\bm 0$ holds or not.
											\begin{enumerate}[a)]
												\item Suppose that $\bm Q_1 \bm y =\bm 0$. Let $\hat{\bm X} = \begin{pmatrix}
													1 &  \hat{\bm y}^{\top}\\
													\hat{\bm y} & \hat{\bm y} \hat{\bm y}^{\top}
												\end{pmatrix} $ denote a feasible solution in set $\C$, i.e., $\langle\bm A_1, \hat{\bm X} \rangle = \langle\bm Q_1, \hat{\bm y} \hat{\bm y}^{\top} \rangle + \bm q_1^{\top} \hat{\bm y}  \le  b_1$, then for any $\alpha\geq 0$, we have
												\begin{align*}
													\hat{\bm X} +\alpha\bm D  
													&=\frac{1}{2} \begin{pmatrix}
														1 &  (\hat{\bm y}+\sqrt{\alpha}\bm{y})^{\top}\\
														\hat{\bm y}+\sqrt{\alpha}\bm{y}&(\hat{\bm y}+\sqrt{\alpha}\bm{y}) (\hat{\bm y}+\sqrt{\alpha}\bm{y})^{\top}
													\end{pmatrix}  +\frac{1}{2} \begin{pmatrix}
														1 &  (\hat{\bm y}-\sqrt{t}\bm{y})^{\top}\\
														\hat{\bm y}-\sqrt{t}\bm{y}&(\hat{\bm y}-\sqrt{\alpha}\bm{y}) (\hat{\bm y}-\sqrt{\alpha}\bm{y})^{\top}
													\end{pmatrix}\\
													&= \frac{1}{2} \bm X_1 + \frac{1}{2} \bm X_2,
												\end{align*}
												where both rank-one matrices $\bm X_1, \bm X_2$ above lie in set $\C$ since  $\langle\bm A_1, \bm X_1\rangle = \langle\bm A_1, \bm X_2\rangle = \langle\bm A_1, \hat{\bm X}\rangle$. This implies that $\bm D$ is a recession direction of $\clconv(\C)$.
												
												\item Suppose that $\bm Q_1 \bm y \neq \bm 0$. Then we can decompose $\bm{y}=\bm{y}_1+\bm{y}_2$ such that $\bm{y}_1^\top \bm Q_1 \bm{y}_1>0$, $\bm{y}_2^\top \bm Q_1 \bm{y}_2<0$, and $\bm{y}_1^\top \bm Q_1 \bm{y}_2=0$. Then let us construct a new rank-one direction $\bm D_\epsilon \in \reccone(\C_{\rel})$ for any $\epsilon >0$ as below, which satisfies $\langle\bm Q_1, (\bm y+\epsilon \bm{y}_2) (\bm y+\epsilon \bm{y}_2)^{\top}  \rangle - |\bm q_1^{\top} (\bm y+\epsilon \bm{y}_2)|  < 0$ and $\langle\bm Q_1, (\bm y+\epsilon \bm{y}_2) (\bm y+\epsilon \bm{y}_2)^{\top} \rangle  < 0$
												\begin{align*}
													\bm D_\epsilon:= \begin{pmatrix}
														0 & \bm 0^{\top}\\
														\bm 0 & (\bm y+\epsilon \bm{y}_2)(\bm y+\epsilon \bm{y}_2)^{\top}
													\end{pmatrix}.
												\end{align*} 
												Following the similar proof of Part (i), we can show that for any $\epsilon >0$, matrix $\bm D_{\epsilon}$ is a recession direction of $\clconv(\C)$. 
												Since the recession cone of set $\clconv(\C)$ is closed, which indicates that the limit $ \bm D=\lim_{\epsilon \to 0} \bm D_{\epsilon}$  is also a recession direction in set  $\clconv(\C)$.

																\end{enumerate}
																	
																	This completes the proof. \qed
																\end{enumerate}

																%
																%
															\end{proof}
															
															\subsection{Proof of \autoref{cor:diag}}\label{proof:cordiag}
															\cordiag*
															\begin{proof}
																We first prove Part (i) by contradiction and the other one follows a similar analysis. 
																
																\noindent \textbf{Part (i).}  Suppose that there is no rank-1 optimal solution to the DWR corresponding to the QCQP problem with such setting. 
																Consider a DWR optimal solution $\bm X^*\in \S_+^{n+1}$ with $\rank(\bm X^*)>1$. 
																Since $\rank(\bm X^*)>1$, there exists a $2\times 2$ principal submatrix $\bm X_{T,T}^* \in \S_+^2 $ of $\bm X^*$ satisfying $(X_{T(1),T(2)} ^*)^2< X^*_{T(1),T(1)}X^*_{T(2),T(2)}$. Without loss of generality, suppose $T=\{1,2\}$. 
																Let us construct a new solution $\hat{\bm X}$ such that $\hat{X}_{1,2}=- \sigma_{ij}\sqrt{X^*_{1,1}X^*_{2,2}}, \hat{X}_{1,1}=X^*_{1,1}, \hat{X}_{1,1}=X^*_{2,2}$ 
																and $\hat X_{ij} = X_{ij}^*$ for any $(i,j)\in ([n+1]\times [n+1])\setminus (T\times T)$. Then we consider an equivalent truncated DWR problem with a focus on the variables indexed by $T\times T$ as
																\begin{align}\label{eq:trun}
																	\text{(Truncated DWR)}  \ \   &\langle \bm A_0, \bm X^* \rangle := \min_{\bm  X \in \clconv(\mathcal{\hat X})}\left\{\langle  \hat{\bm A}_0, \bm X \rangle + c_0: 
																	\langle \hat{\bm A}_i, \bm X \rangle + c_i \le  b_i, \forall i \in [m] \right\},
																\end{align}
																where $\mathcal{\hat X}:=\{\bm X\in \S_+^{2}:  X_{12}^2 =X_{11}X_{22} \}$ and $\clconv(\mathcal{\hat X}) =\S_+^{2}$, 
																$\hat{\bm A}_0 \in  \S^{2}= (\bm A_0)_{T, T}$ and $\hat{\bm A}_i \in  \S^{2}= (\bm A_i)_{T, T}$  for each $i\in [m]$, and $c_0= \langle \bm A_0, \bm X^* \rangle - \langle \hat{\bm A}_0, \bm X^*_{T,T}\rangle $ and $c_i= \langle \bm A_i, \bm X^* \rangle - \langle \hat{\bm A}_i, \bm X^*_{T,T} \rangle $ for each $i\in [m]$.
																
																For the truncated DWR problem \eqref{eq:trun}, without loss of generality, the submatrix $\bm X^*_{T,T}$ is optimal and leads to the objective value $\langle \bm A_0, \bm X^* \rangle $. We also see that $\bm X^*_{T,T}$  is in the interior of $\clconv(\hat{\X})$ and satisfied the relaxed Slater condition. According to \autoref{them:opt}, there must exist an $\bm \mu^* \in \Re_+^m$  such that
																\[ -\hat{\bm A}_0 \in \sum_{ i \in [m]}\mu^*_i  \hat{\bm A}_i + \N_{\clconv(\hat{\X})} (\bm X^*_{T,T}) = \sum_{ i \in [m]}\mu^*_i  \hat{\bm A}_i,\]
																where the equation is because $\bm X^*_{T,T}$ is  in the interior of $\clconv(\hat{\X}))$. Next we discuss two cases depending on whether $(\hat{ A}_0)_{12}$ attains zero or not to show that $\hat{X}_{T,T}$ is also optimal to the truncated DWR \eqref{eq:trun}.
																\begin{enumerate}[(a)]
																	\item Suppose that $(\hat{ A}_0)_{12} \neq 0$. Since $(\hat{ A}_0)_{12}$ has the same sign with $(\hat{ A}_1)_{12}, \cdots, (\hat{ A}_m)_{12}$, given $\bm \mu^* \in \Re_+^m$, the optimality condition $-\hat{\bm A}_0  = \sum_{ i \in [m]}\mu^*_i  \hat{\bm A}_i$ cannot hold, a contradiction.
																	
																	\item Suppose that $(\hat{ A}_0)_{12} = 0$. In this case, changing $X^*_{12}$ does not affect the objective value. As $(X_{1,2} ^*)^2< X^*_{1,1}X^*_{2,2}$, submatrix $\hat{\bm X}_{T,T}$ is  feasible and attains the same optimal value for the truncated DWR \eqref{eq:trun}  and thus is optimal.
																	
																	Following this scheme to adjust the solution $\hat{\bm  X}$, we can either find an alternative DWR feasible rank-one matrix $\hat{\bm  X}$  such that for any $i\neq j$, $\hat{X}_{ij} = - \sigma_{ij} \sqrt{\hat{X}_{ii}\hat{X}_{jj}}$ or arrive at a contradiction as part (a). In the first case, given the condition that for each cycle $S$ of graph $\G$, we have $ \prod_{i\in [|S|]} \sigma_{S(i),S(i+1)} = (-1)^{|S|}$, following the similar argument in \cite{sojoudi2014exactness}, there exists a vector $\bm x^*$  such that $\hat{\bm  X}= \begin{bmatrix}
																		1 & \bm x^*\\
																		\bm x^* &  \bm x^* (\bm x^*)^{\top}
																	\end{bmatrix}$, implying the superiority
																	of a rank-one matrix, a contradiction.
																\end{enumerate}
																
																
																\noindent \textbf{Part (ii).} For any feasible solution $\bm X^*:= \begin{bmatrix}
																	1 & \bm x^*\\
																	\bm x^* &  \bm Y^*
																\end{bmatrix}$ to the DWR of the one-sided QCQP, 
																suppose that for some  $i\in [n]$, we have $Y^*_{ii} > (x_i^*)^2 $. Following the similar proof as \textbf{Part (i)}, we can construct a truncated problem of the DWR that only involves with variables $(x_i, Y_{ii})$ with the feasible set $\hat{\X}:=\{(x, y) \in \Re^2: x^2=y \}$. 
																Since $\bm q_0, \bm q_1, \cdots, \bm q_n$ are sign-definite, we can either obtain a contradiction or adjust $x_i^*$ and construct a pair of new optimal solution $(\sigma_i \sqrt{Y^*_{ii}},  Y^*_{ii} )$, where $\sigma_i=-1$ if $(q_0)_i,  (q_1)_i, \cdots , (q_m)_i \ge 0$ and $\sigma_i=1$, otherwise.
																It follows that there exists a rank-one optimal solution, i.e., $Y^*_{ii} = (x_i^*)^2 $ for all $i\in [n]$, a contradiction.
																\qed
															\end{proof}

\end{appendices}

\end{document}